\newtheorem{theorem}{Theorem}[section]
\newtheorem{lemma}{Lemma}[section]
\newtheorem{proposition}{Proposition}[section]
\newtheorem{remark}{Remark}[section]
\numberwithin{equation}{section}
\begin{document}
	\title[Transport Equation Theory in the Triebel-Lizorkin Spaces]
	{Transport equation theory in the Triebel-Lizorkin spaces and its applications to the ideal fluid flows }
	\author{Qianyuan Zhang}
	\address{Qianyuan Zhang\newline
		School of Mathematics and Statistics\\
		Huazhong University of Science and Technology, Wuhan 430074,  China}
	\email{qianyuanzhang@hust.edu.cn}
	
	\author{Kai Yan}
	\address{Kai Yan (Corresponding author)\newline
		School of Mathematics and Statistics\\
		Huazhong University of Science and Technology, Wuhan 430074,  China}
	\email{kaiyan@hust.edu.cn}
	
	\begin{abstract}
		In this paper, we develop a general theory for the transport equation within the framework of Triebel-Lizorkin spaces. We first derive commutator estimates in these spaces, dispensing with the conventional divergence-free condition, via the Bony paraproduct decomposition and vector-valued maximal function inequalities. Building on these estimates and combining the method of characteristics with a compactness argument, we then obtain the new a priori estimates and prove local well-posedness for the transport equation in Triebel-Lizorkin spaces. The resulting theory is applicable to a wide range of evolution equations, including models for incompressible and compressible ideal fluid flows, shallow water waves, among others. As an illustration, we consider the incompressible ideal magnetohydrodynamics (MHD) system. Employing the general transport theory developed here yields a complete local well-posedness result in the sense of Hadamard, covering both sub-critical and critical regularity regimes, and provides corresponding blow-up criteria for the ideal MHD equations in Triebel-Lizorkin spaces. Our results refine and substantially extend earlier work in this direction.
	\end{abstract}

	\maketitle
	
	\noindent {\sl Keywords\/}: Transport equation; Triebel-Lizorkin spaces; commutator estimates; ideal MHD system; well-posedness; blow-up 
	
	\vskip 0.2cm
	
	\noindent {\sl AMS Subject Classification (2020)}: 
	35Q49, 35Q35, 76W05, 35A01, 35B44\\
	
	\setcounter{equation}{0}

	\section{Introduction}
	\subsection{General theory for transport equation.}
	The primary objective of this paper is to develop a well-posedness theory for the following transport equation:
	\begin{equation}\label{Eq}
		\begin{cases}
		    \partial_tf+v\cdot\nabla f=g,& t > 0, x \in \mathbb{R}^d,\\
			f|_{t=0}=f_0,
		\end{cases}\tag{T}
	\end{equation}
	in the framework of Triebel-Lizorkin spaces, where $v\colon\mathbb{R}^+ \times \mathbb{R}^d \to \mathbb{R}^d$ is a given time-dependent vector field, $f_0\colon \mathbb{R}^d \to \mathbb{R}^N$ and $g\colon \mathbb{R}^+ \times \mathbb{R}^d \to \mathbb{R}^N$ $(d,N\in\mathbb{N}^+)$ are the given initial data and source term, respectively. Transport equations of this type arise naturally in many areas of mathematical physics, particularly in the analysis of partial differential equations from fluid mechanics. Although in such contexts the velocity field $v$ and the source $g$ often depend nonlinearly on the unknown $f$, having a good theory for the linear transport equation \eqref{Eq} constitutes an essential preliminary step for studying more complex coupled systems. Over the past few decades, significant progress has been made on the well-posedness of transport equations with weakly differentiable (in the space variables) velocity fields. In their seminal work, DiPerna and Lions \cite{MR1022305} established existence and uniqueness of solutions in $L^\infty$ under the assumption that $v$ belongs to $W^{1,1}$ (with bounded divergence).  Ambrosio \cite{MR2096794} later extended the result to BV vector fields, under additional conditions on the divergence. Chemin \cite{MR1688875} established a priori estimates for the transport equation in H\"{o}lder spaces $\mathcal{C}^r$, under the condition that the velocity field $v$ is divergence-free. A systematic well-posedness theory for \eqref{Eq} in Besov spaces has been developed in \cite{MR2768550,MR2231013}, provided the vector field $v$ is at least Lipschitz in the spatial variable.
	
	As is well known, the Triebel-Lizorkin spaces $F^s_{p,q}$ and their homogeneous versions $\dot{F}^s_{p,q}$ provide a unified framework encompassing many classical function spaces arising in the theory of partial differential equations. For instance, some notable equivalent characterizations include (see the details in \cite{MR2768550,MR781540}):
	\begin{itemize}[left=0pt]
		\item The Sobolev spaces $W^{s,p}=F^s_{p,2}$ for $s\in\mathbb{R},1<p<\infty.$ In particular, $H^s=F^s_{2,2}$ for $s\in\mathbb{R}$.
		\item The H\"{o}lder-Zygmund spaces $\mathscr{C}^s=F^s_{\infty,\infty}$ for $s>0. $
		\item The Hardy spaces $\mathcal{H}_p=\dot{F}^0_{p,2}$ for $0<p<\infty$. 
		\item The space of functions of bounded mean oscillation $BMO=\dot{F}^0_{\infty,2}$. 
		\item The Besov spaces: $B^s_{p,\min(p,q)} \hookrightarrow F^s_{p,q}\hookrightarrow B^s_{p,\max(p,q)}$ $(s\in\mathbb{R},(p,q)\in[1,\infty)\times [1,\infty]$ or $p=q=\infty$). In particular, $F^s_{p,p}=B^s_{p,p}$ $(s\in\mathbb{R},1\leq p\leq \infty)$.
	\end{itemize}
	This unifying perspective is a primary motivation for developing a dedicated theory within this setting. The theory of transport equations in Triebel-Lizorkin spaces, however, presents distinct and considerable challenges compared to that in Besov spaces. A fundamental technical reason lies in the non-commutativity of norms: for Triebel-Lizorkin spaces, the $L^p$ norm is taken after the $l^q$ norm over frequency blocks (see definition \eqref{qiciTLkongjiandingyi}), unlike in Besov spaces where the order is reversed. Consequently, the frequency pieces $\Delta_jf$ (defined in \eqref{Delta_jdingyi}) cannot be treated individually, they must be estimated simultaneously as a whole. This intrinsic structure makes tools like the vector-valued maximal function inequality (Lemma \ref{Fguji}) not merely convenient but virtually indispensable for establishing key estimates.
	
	More precisely, in the framework of Triebel-Lizorkin spaces, we develop the following general theory\footnote{Our results (Theorems \ref{xianyanguji} and \ref{shuyunfangchengcunzaixing}) for $x\in\mathbb{R}^d$ may be easily carried out to the periodic case $x\in\mathbb{T}^d$.} 
	(Theorems \ref{xianyanguji} and \ref{shuyunfangchengcunzaixing}) for the transport equation \eqref{Eq}, which seems to be new in the existing literature.
	
	\begin{theorem}(A priori estimates)\label{xianyanguji}
		Let $(p,q)\in[1,\infty)\times[1,\infty]$ or $p=q=\infty$. Suppose that $s>0$ and $v$ is a vector filed such that $\nabla v\in L^1(0,T;L^\infty(\mathbb{R}^d))$. Assume that $f_0\in F^s_{p,q}(\mathbb{R}^d), g\in L^1(0,T;F^s_{p,q}(\mathbb{R}^d))$ and that $f\in L^\infty(0,T;F^s_{p,q}(\mathbb{R}^d))\cap C([0,T];\mathscr{S}'(\mathbb{R}^d))$ solves the transport equation \eqref{Eq}. Then 
		\begin{enumerate}[leftmargin=22pt,label={(\roman*)}]
			\item if moreover, $v\in L^1(0,T;F^s_{p,q}(\mathbb{R}^d))$, $\nabla f\in L^\infty (0,T;L^\infty(\mathbb{R}^d))$ ,then there exists a constant $C$ depending only on $d,p,q$ and $s$, such that the following estimates holds for all $t\in[0,T]$,
			\begin{align}
				\|f\|_{F^s_{p,q}}\leq\|f_0\|_{F^s_{p,q}}+\int_{0}^{t}\|g\|_{F^s_{p,q}}d\tau+C\int_{0}^{t}\big(\|\nabla v\|_{L^\infty}\|f\|_{F^s_{p,q}}+\|\nabla f\|_{L^\infty}\|v\|_{F^s_{p,q}}\big)d\tau.\label{feiqiciF-priori-estimates}
			\end{align}
			\item If $s>1+\frac{d}{p}$ when $p>1$ or $s\geq 1+d$ when $p=1$ and $\nabla v \in L^1(0,T;F^{s-1}_{p,q}(\mathbb{R}^d))$, we have
			\begin{equation}\label{xianyangujijifenshizi}
				\|f\|_{F^s_{p,q}}\leq \|f_0\|_{F^s_{p,q}}+\int_{0}^{t}\|g\|_{F^s_{p,q}}d\tau+C\int_{0}^{t}U(\tau)\|f\|_{F^s_{p,q}}d\tau,
			\end{equation}
			or hence
			\begin{equation}\label{xianyangujishizi}
				\|f\|_{F^s_{p,q}}\leq e^{C\int_{0}^{t}\|\nabla v\|_{F^{s-1}_{p,q}}d\tau}\left(\|f_0\|_{F^s_{p,q}}+\int_{0}^{t}\|g\|_{F^s_{p,q}}e^{-C\int^\tau_0U(\tau')d\tau'}d\tau\right)
			\end{equation}
			with $U(t)\triangleq\|\nabla v\|_{F^{s-1}_{p,q}}$.
			\item Moreover, if $f=v$ and $\nabla v\in L^1(0,T;L^\infty)$, then for all $s>0$, the estimates \eqref{xianyangujijifenshizi} and \eqref{xianyangujishizi} hold with $U(t)=\|\nabla v\|_{L^\infty}$.
			\item If $\textup{div}v=0$ and $\nabla v\in L^1(0,T;F^s_{p,q}(\mathbb{R}^d))$, then for all $s>-1$, we have
			\begin{equation}
				\|f\|_{F^s_{p,q}}\leq\|f_0\|_{F^s_{p,q}}+\int_{0}^{t}\|g\|_{F^s_{p,q}}d\tau+C\int_{0}^{t}\big(\|\nabla v\|_{L^\infty}\|f\|_{F^s_{p,q}}+\| f\|_{L^\infty}\|\nabla v\|_{F^s_{p,q}}\big)d\tau.\label{feiqiciF-priori-estimates-divv=0}
			\end{equation}
		\end{enumerate}			
	\end{theorem}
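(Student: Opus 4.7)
The plan is to reduce the Triebel--Lizorkin estimate to an $L^p$ energy inequality for the Littlewood--Paley square function, fed by the commutator estimates already developed in the preceding section. Applying the dyadic block $\Delta_j$ to \eqref{Eq} yields
\begin{equation*}
\partial_t \Delta_j f + v\cdot\nabla \Delta_j f = \Delta_j g + R_j, \qquad R_j := [v\cdot\nabla,\Delta_j]f,
\end{equation*}
and all four items will be obtained from a single scheme; the differences between (i)--(iv) reduce to which commutator bound for $\bigl\|(2^{js}R_j)_j\bigr\|_{L^p(\ell^q)}$ is inserted at the end.

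The central observation I will use is a pointwise transport inequality for the Triebel--Lizorkin square function
\begin{equation*}
F(t,x):=\Bigl(\sum_{j\geq -1}2^{jsq}|\Delta_j f(t,x)|^q\Bigr)^{1/q}
\end{equation*}
(with the obvious $\sup_j$ modification when $q=\infty$). Multiplying the dyadic equation by $2^{jsq}|\Delta_j f|^{q-2}\Delta_j f$, summing in $j$, and applying a discrete Hölder inequality pointwise will give, wherever $F>0$,
\begin{equation*}
(\partial_t + v\cdot\nabla)F \leq G + \mathcal{R},
\end{equation*}
where $G$ and $\mathcal{R}$ are the analogous square functions built from $g$ and $(R_j)_j$. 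The non-commutativity of $L^p(\ell^q)$ is handled precisely here, pointwise in $x$, so that the order of norms plays no role; the endpoint indices $q\in\{1,\infty\}$ will require a brief regularisation (replacing $|x|^q$ by $(|x|^2+\varepsilon)^{q/2}$ and letting $\varepsilon\to 0$, or passing from $q<\infty$ to $q=\infty$ by monotone convergence).

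For $p<\infty$ I will then multiply this pointwise inequality by $F^{p-1}$, integrate in $x$, and perform one integration by parts on the convective term to obtain
\begin{equation*}
\frac{d}{dt}\|F\|_{L^p}\leq \|G\|_{L^p}+\|\mathcal{R}\|_{L^p}+\tfrac{1}{p}\|\mathrm{div}\,v\|_{L^\infty}\|F\|_{L^p}.
\end{equation*}
Using $\|F\|_{L^p}=\|f\|_{F^s_{p,q}}$ and $\|G\|_{L^p}=\|g\|_{F^s_{p,q}}$ and integrating in time, the four advertised bounds follow by substituting, respectively: in (i) the generic commutator estimate $\|\mathcal{R}\|_{L^p}\lesssim\|\nabla v\|_{L^\infty}\|f\|_{F^s_{p,q}}+\|\nabla f\|_{L^\infty}\|v\|_{F^s_{p,q}}$, the divergence remainder being absorbed into the first summand; in (ii) the higher-regularity bound $\|\mathcal{R}\|_{L^p}\lesssim\|\nabla v\|_{F^{s-1}_{p,q}}\|f\|_{F^s_{p,q}}$, where the embedding $F^{s-1}_{p,q}\hookrightarrow L^\infty$ (valid for $s>1+d/p$, or $s\geq 1+d$ when $p=1$) lets $\|\nabla v\|_{L^\infty}$ be dominated by $U(t)=\|\nabla v\|_{F^{s-1}_{p,q}}$, Gronwall then promoting \eqref{xianyangujijifenshizi} to \eqref{xianyangujishizi}; in (iii) the substitution $f=v$ collapses the two terms of (i) into $\|\nabla v\|_{L^\infty}\|v\|_{F^s_{p,q}}$, giving $U=\|\nabla v\|_{L^\infty}$; and in (iv) the divergence-free commutator bound $\|\mathcal{R}\|_{L^p}\lesssim\|\nabla v\|_{L^\infty}\|f\|_{F^s_{p,q}}+\|f\|_{L^\infty}\|\nabla v\|_{F^s_{p,q}}$, valid down to $s>-1$ via the identity $v\cdot\nabla f=\mathrm{div}(v\otimes f)$ which shifts the derivative onto $v$, together with the vanishing of $\mathrm{div}\,v$ in the energy estimate. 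The borderline case $p=q=\infty$ forbids integration by parts and will be handled instead along the characteristic flow $\phi_v$ of $v$, whose supremum-preserving property turns the pointwise bound $F(t,\phi_v(t,x))\leq F_0(x)+\int_0^t(G+\mathcal{R})(\tau,\phi_v(\tau,x))d\tau$ directly into the desired $L^\infty$ estimate.

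The main technical obstacle sits outside the present scheme, in the low-regularity commutator bound for (iv) where the paraproduct remainder $R(v,\nabla f)$ must be pushed to $s>-1$ using the divergence-free structure and Lemma \ref{Fguji}; within the proof of Theorem \ref{xianyanguji} itself, the only delicate point is the derivation of the pointwise transport inequality in the endpoint indices $q\in\{1,\infty\}$, where the Hölder manipulation degenerates and a careful limiting argument is required.
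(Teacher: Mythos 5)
Your proposal is correct in substance but follows a genuinely different route from the paper for $p<\infty$. The paper is uniformly Lagrangian: it integrates each block equation along the flow $X(t,\alpha)$, inserts the Jacobian $J(t,\alpha)$ (whose evolution $\partial_tJ=J\,\mathrm{div}\,v$ produces the $\|\mathrm{div}\,v\|_{L^\infty}$ term), applies Minkowski's inequality twice to pass through $\ell^q_j$ and then $L^p_\alpha$, and undoes the change of variables; the $L^p$ norm of $f$ itself is estimated separately by multiplying \eqref{Eq} by $\mathrm{sgn}(f)|f|^{p-1}$. You instead derive a pointwise Eulerian transport inequality $(\partial_t+v\cdot\nabla)F\leq G+\mathcal{R}$ for the square function and then run a scalar $L^p$ energy estimate, which produces the same $\frac1p\|\mathrm{div}\,v\|_{L^\infty}$ term via integration by parts; both schemes then feed in Proposition \ref{zijixiedejiaohuanzi} identically, and your reductions for items (ii)--(iv) (embedding $F^{s-1}_{p,q}\hookrightarrow L^\infty$ plus Gronwall, the collapse when $f=v$, the divergence-free commutator bound for $s>-1$) match the paper's. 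What the paper's route buys is that it never differentiates the square function: each $\Delta_jf\circ X$ is absolutely continuous in $t$ individually, so the only tools needed are Minkowski and the change of variables. Your route concentrates all the difficulty in the pointwise inequality, where the chain rule for $F^q=\sum_j2^{jsq}|\Delta_jf|^q$ (term-by-term differentiation of an infinite sum, behaviour on $\{F=0\}$, and the degenerate cases $q\in\{1,\infty\}$) must be justified; you acknowledge this, and it is doable (e.g.\ in a renormalized sense, or by first working along characteristics blockwise and only then summing, which essentially recovers the paper's argument), but it is the step that would need to be written out carefully.

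Two small corrections. First, you define $F$ with $\sum_{j\geq-1}$, but the paper's norm is $\|f\|_{F^s_{p,q}}=\|f\|_{L^p}+\|f\|_{\dot F^s_{p,q}}$ with the homogeneous sum over $j\in\mathbb{Z}$; so $\|F\|_{L^p}\neq\|f\|_{F^s_{p,q}}$, only an equivalent norm, which would spoil the constant $1$ in front of $\|f_0\|_{F^s_{p,q}}$ and $\int_0^t\|g\|_{F^s_{p,q}}d\tau$ asserted in \eqref{feiqiciF-priori-estimates}--\eqref{feiqiciF-priori-estimates-divv=0}. Run your scheme on the homogeneous square function over $j\in\mathbb{Z}$ and add the paper's separate $L^p$ estimate. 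Second, for $p=q=\infty$ your characteristic argument is exactly the paper's (there is no Jacobian issue since no integration in $\alpha$ is performed), so that endpoint is unproblematic.
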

	\begin{remark}
		A key technical distinction arises between the Besov and Triebel-Lizorkin spaces when applying the Littlewood-Paley method to establish a priori estimates. To control the solution in scales of function spaces, one naturally applies the frequency localization operator $\Delta_j$ to \eqref{Eq}. The process yields the term $[v,\Delta_j]\cdot\nabla f$ and $v\cdot\nabla\Delta_j f$. In Besov spaces, one directly handles $v\cdot\nabla\Delta_j f$ via standard energy arguments and integration by parts. This is infeasible in the Triebel-Lizorkin spaces due to their non-commutative norm structure ($l^q$ before $L^p$). Our proof therefore employs a characteristic method. Let us introduce particle trajectory mapping $X(t,\alpha)$, by definition, the solution to the following ordinary differential equation:
		\[ 
		\begin{cases}
			\partial_tX(t,\alpha)=v(t,X(t,\alpha)),\\
			X(0,\alpha)=\alpha.
		\end{cases} \]
		We use the following identity
		\begin{equation*}
			\partial_t\left(\Delta_jf\left(t,X(t,\alpha)\right)\right)=\Delta_jg(t,X(t,\alpha))+[v,\Delta_j]\cdot\nabla f(t,X(t,\alpha))
		\end{equation*}
		to absorb the problematic term $v\cdot\nabla\Delta_jf$ from the outset, as it is incorporated into the total derivative. The subsequent estimates then focus on controlling the commutator (see Proposition \ref{zijixiedejiaohuanzi} below) and the evolution of $X(t,\alpha)$ in $F^s_{p,q}$, which is better suited to its $l^q$-before-$L^p$ structure. This approach provides a natural path to the required a priori estimate without relying on the integration by parts technique that is central to the case of Besov spaces.
	\end{remark}
	\begin{theorem}\label{shuyunfangchengcunzaixing}
		(Local well-posedness for transport equation) Let $(p,q)\in[1,\infty)\times[1,\infty]$ or $p=q=\infty$. Suppose that $s>1+\frac{d}{p}$ when $p>1$ or $s\geq 1+d$ when $p=1$, $f_0 \in F^s_{p,q}(\mathbb{R}^d)$ and $g\in L^1(0,T;F^s_{p,q}(\mathbb{R}^d))$. Let v be a time dependent vector field with coefficients in $L^\rho(0,T;F^{-M}_{\infty,\infty}(\mathbb{R}^d))$ for some $\rho>1$ and $M>0$, and such that $\nabla v\in L^1(0,T;F^{s-1}_{p,q}(\mathbb{R}^d))$. Then the equation \eqref{Eq} has a unique solution $f\in L^\infty([0,T];F^s_{p,q}(\mathbb{R}^d))\bigcap(\cap_{s'<s}C([0,T];F^{s'}_{p,1}(\mathbb{R}^d))) $ and the estimates in Theorem \ref{xianyanguji} hold true. If moreover $q<\infty$, then we have $f\in C([0,T];F^s_{p,q}(\mathbb{R}^d))$. Furthermore, the data-to-solution map $f_0\mapsto f$ is continuous from $F^s_{p,q}$ into $L^\infty ([0,T];F^{s}_{p,q}(\mathbb{R}^d))\cap C([0,T],F^{s'}_{p,1}(\mathbb{R}^d))$ for every $s'<s$ if $q=\infty$, and into $C ([0,T];F^{s}_{p,q}(\mathbb{R}^d))$ if $q<\infty$.
	\end{theorem}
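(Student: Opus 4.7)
The plan is a Friedrichs-type scheme. Let $S_n$ denote the standard low-frequency cutoff and define $v^n := S_n v$, $f_0^n := S_n f_0$, $g^n := S_n g$; since $v^n$ is smooth in the spatial variable, the method of characteristics furnishes a unique classical solution $f^n$ to \eqref{Eq} with data $(v^n, f_0^n, g^n)$. Applying Theorem~\ref{xianyanguji}(ii) to $f^n$ and using that $S_n$ is uniformly bounded on $F^s_{p,q}$ and $F^{s-1}_{p,q}$ yields a bound on $\|f^n\|_{L^\infty(0,T;F^s_{p,q})}$ that is independent of $n$, together with a uniform version of \eqref{xianyangujijifenshizi}--\eqref{xianyangujishizi} for each $f^n$.

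Next I would prove that $(f^n)$ is Cauchy in a weaker topology. The difference $f^{n+m}-f^n$ satisfies a transport equation with drift $v^{n+m}$, source $(g^{n+m}-g^n)+(v^n-v^{n+m})\cdot\nabla f^n$, and initial data $S_{n+m}f_0-S_n f_0$. Since $s-1>d/p$ (or $s-1\geq d$ when $p=1$), Theorem~\ref{xianyanguji} still applies at regularity $s-1$; combined with the standard truncation estimate $\|(\mathrm{Id}-S_n)u\|_{F^{s-1}_{p,q}}\lesssim 2^{-n}\|u\|_{F^s_{p,q}}$, this forces the source to vanish in $L^1(0,T;F^{s-1}_{p,q})$ as $n\to\infty$ and delivers the Cauchy property in $C([0,T];F^{s-1}_{p,q})$. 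Interpolating this convergence against the uniform $F^s_{p,q}$-bound yields convergence in $C([0,T];F^{s'}_{p,1})$ for every $s'<s$ (via the embedding $F^\sigma_{p,q}\hookrightarrow F^{s'}_{p,1}$ for $\sigma\in(s',s)$). The limit $f$ lies in $L^\infty(0,T;F^s_{p,q})$ by Fatou in the mixed-norm space $L^p(\ell^q)$, satisfies \eqref{Eq} distributionally, and inherits all estimates of Theorem~\ref{xianyanguji}. Uniqueness follows by the same difference argument applied to two solutions with identical data, using the $F^{s-1}_{p,q}$ estimate and Gr\"onwall's inequality.

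For strong time-continuity in $F^s_{p,q}$ when $q<\infty$ and for the continuous dependence $f_0\mapsto f$ in that topology, I would use a Bona--Smith argument: density of Schwartz functions in $F^s_{p,q}$ (which requires $q<\infty$) provides smooth data whose classical solutions are $F^s_{p,q}$-continuous in $t$, and combining this with the $F^{s'}_{p,1}$ convergence and the uniform $F^s_{p,q}$ control upgrades to $f\in C([0,T];F^s_{p,q})$. For the data-to-solution map, a perturbation $f_0-\tilde f_0$ is split via $S_N$ into a smooth piece (propagated at regularity $s$ by Theorem~\ref{xianyanguji} with a constant depending on $N$) and a small remainder (controlled uniformly in $N$ via the stability estimate and a tail bound on $S_N$), after which one optimizes in $N$. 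The principal obstacle throughout is the interchange of $L^p$ and $\ell^q$ operations: in the Besov setting dominated convergence acts blockwise, whereas in Triebel-Lizorkin spaces the $\ell^q$-summation sits inside $L^p$, so any argument passing to the limit in $F^s_{p,q}$ must operate directly on the mixed-norm space $L^p(\ell^q)$. This is precisely where the vector-valued maximal inequality (Lemma~\ref{Fguji}) and the characteristic-based framework developed for Theorem~\ref{xianyanguji} become indispensable.
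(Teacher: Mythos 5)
Your scheme diverges from the paper's at the convergence step, and that is where it breaks. You claim that because $s-1>\frac dp$, Theorem~\ref{xianyanguji} "still applies at regularity $s-1$" to the difference $w=f^{n+m}-f^n$. It does not: estimate \eqref{xianyangujijifenshizi} at regularity level $\sigma$ requires $\sigma>1+\frac dp$, so applying it at $\sigma=s-1$ would need $s>2+\frac dp$. The condition $s-1>\frac dp$ only makes $F^{s-1}_{p,q}$ an algebra (enough to bound the source $(v^{n+m}-v^n)\cdot\nabla f^n$), not enough to close the transport estimate. What is actually available at level $s-1$ for a velocity that is \emph{not} divergence-free is \eqref{feiqiciF-priori-estimates}/\eqref{qudiaodiv=0}, which produce the term $\|\nabla w\|_{L^\infty}\|v^{n+m}\|_{F^{s-1}_{p,q}}$; here $\|\nabla w\|_{L^\infty}$ is merely bounded, neither small in $n$ nor controlled by $\|w\|_{F^{s-1}_{p,q}}$, so Gr\"onwall does not give the Cauchy property. (The divergence-free variant \eqref{feiqiciF-priori-estimates-divv=0}, which would replace this by $\|w\|_{L^\infty}\|\nabla v\|_{F^{s-1}_{p,q}}$ and close, is exactly what the paper uses later for MHD — but Theorem~\ref{shuyunfangchengcunzaixing} makes no such assumption on $v$.) The paper sidesteps the Cauchy argument entirely: it bounds $\partial_t f^n-g^n=-v^n\cdot\nabla f^n$ in $L^1(0,T;F^{s-1}_{p,q})$, gets equicontinuity of $\bar f^n=f^n-\int_0^t g^n$, and extracts a limit by the compact embedding $f\mapsto\phi f:F^s_{p,q}\to F^{s-1}_{p,q}$, Arzel\`a--Ascoli, a diagonal argument, and the Fatou property. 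If you want to keep a Cauchy-type argument, you could run the difference estimate in $L^p$ alone (where only $\|\mathrm{div}\,v\|_{L^\infty}$ enters) and then interpolate with the uniform $F^s_{p,q}$ bound, but as written your step fails.

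The Bona--Smith argument for time-continuity when $q<\infty$ also does not go through. Smoothing the \emph{data} does not produce solutions that are continuous in $F^s_{p,q}$, because the obstruction is the roughness of $v$: with only $\nabla v\in L^1(0,T;F^{s-1}_{p,q})$ the a priori estimate cannot be run at any level above $s$, so a solution with Schwartz initial data is no better than $L^\infty(0,T;F^s_{p,q})$ and its $F^s_{p,q}$-continuity in time is precisely the unproved claim. The paper instead proves $\sup_{t\in[0,T]}\|f-S_nf\|_{F^s_{p,q}}\to0$ directly, via a Gr\"onwall estimate on the truncated norms $\|\,\|2^{j's}\Delta_{j'}f\|_{l^q(j'\ge n)}\|_{L^p}$, whose key ingredient is the refined commutator estimate of Remark~\ref{s>n-proposition}; this is the step your outline is missing. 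Finally, note that for the \emph{linear} equation the continuous dependence in $C([0,T];F^s_{p,q})$ is immediate from \eqref{xianyangujishizi} applied to the difference of two solutions (same $v$, so no loss of derivative); your $S_N$-splitting is unnecessary here, and its "propagate the smooth piece" step would again require an estimate one derivative above $s$ that the hypotheses on $v$ do not permit.
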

	\begin{remark}
		The existence of solution in Theorem \ref{shuyunfangchengcunzaixing} is established via compactness arguments applied to a sequence of regularized  approximate solutions $\{f^n\}_{n\in\mathbb{N}}$ which are the solutions to the transport equation with smoothed data $(f^n_0,g^n,v^n)$, obtained through frequency truncation and mollification.  To the best of our knowledge, a detailed implementation of such compactness method in the Triebel-Lizorkin spaces $F^s_{p,q}$ appears to be new, whereas it is standard in the context of Besov spaces (cf. \textup{\cite{MR2768550}}). Besides, a key step consists in establishing the time continuity of the solution, i.e. that $f\in C([0,T];F^s_{p,q})$ for $q<\infty$. This is equivalent to proving the uniform-in-time decay of the high-frequency part:
		\begin{equation*}
			\sup_{t\in[0,T]}\|f-S_nf\|_{F^s_{p,q}}\to0,\quad\text{as } n\to\infty.
		\end{equation*}
		The estimate for this term relies on controlling the commutator $[v,\Delta_j]\cdot \nabla f$, which constitutes the main difficulty due to the non‑commutative norm structure of $F^s_{p,q}$. This challenge is overcome by establishing a refined commutator estimate (see Remark \ref{s>n-proposition}).
	\end{remark}
	As mentioned above, in order to prove Theorems \ref{xianyanguji} and \ref{shuyunfangchengcunzaixing}, it is crucial to establish the following commutator estimates.
	\begin{proposition}(Commutator estimates)
		\label{zijixiedejiaohuanzi}	    	
		Let $(p,q)\in[1,\infty)\times[1,\infty]$, or $p=q=\infty$.
		Then for $s>0$, we have
		\begin{align}
			\Big\|\big\|2^{js}([f,\Delta_j]\cdot \nabla g) \big\|_{l^q(j\in\mathbb{Z})}\Big\|_{L^p}&\lesssim\sum_{j\in\mathbb{Z}}2^{js}\chi_{\{j\leq 4\}}\left(||\nabla f||_{L^\infty}||g||_{\dot{F}^s_{p,q}}+||\nabla g||_{L^\infty}||f||_{\dot{F}^s_{p,q}}\right)\label{qudiaodiv=0+young}\\
			& \lesssim\left(||\nabla f||_{L^\infty}||g||_{\dot{F}^s_{p,q}}+||\nabla g||_{L^\infty}||f||_{\dot{F}^s_{p,q}}\right)\label{qudiaodiv=0},
		\end{align}
		or for $s>-1$ and $\textup{div}f=0$,
		\begin{align}
			\Big\|\big\|2^{js}([f,\Delta_j]\cdot \nabla g) \big\|_{l^q(j\in\mathbb{Z})}\Big\|_{L^p} &\lesssim\sum_{j\in\mathbb{Z}}2^{js}\chi_{\{j\leq 4\}}\left(||\nabla f||_{L^\infty}||g||_{\dot{F}^s_{p,q}}+|| g||_{L^\infty}||\nabla f||_{\dot{F}^s_{p,q}}\right)\label{divf=0jiaohanzi+young}\\
			&\lesssim\left(||\nabla f||_{L^\infty}||g||_{\dot{F}^s_{p,q}}+|| g||_{L^\infty}||\nabla f||_{\dot{F}^s_{p,q}}\right),\label{divf=0jiaohanzi}
		\end{align}
		where the notation $A\lesssim B$ as an equivalent to $A \leq CB$ with some constant $C$.
	\end{proposition}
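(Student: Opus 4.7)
The plan is to apply Bony's paraproduct decomposition to both $f\cdot\nabla g$ and $f\cdot\Delta_j\nabla g$, producing
\[
[f,\Delta_j]\cdot\nabla g = [T_f,\Delta_j]\nabla g + \big(T_{\Delta_j\nabla g}f - \Delta_j T_{\nabla g}f\big) + \big(R(f,\Delta_j\nabla g) - \Delta_j R(f,\nabla g)\big),
\]
and then to control each piece by a pointwise bound involving the Hardy--Littlewood maximal function $M(\Delta_k\,\cdot)$, so that the final passage to the $L^p(\ell^q)$ norm can be performed by the Fefferman--Stein vector-valued maximal inequality (Lemma~\ref{Fguji}). This is the only viable route, since the non-commutative $\ell^q$-before-$L^p$ structure forbids the blockwise $L^p$ estimate that is standard in the Besov setting.

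For the principal commutator $[T_f,\Delta_j]\nabla g = \sum_{k}[S_{k-1}f,\Delta_j]\Delta_k\nabla g$, I would exploit the frequency-support constraint $|k-j|\le N_0$ and write $\Delta_j$ as convolution with $h_j=2^{jd}h(2^j\cdot)$. A first-order Taylor expansion gives
\[
[S_{k-1}f,\Delta_j]\Delta_k\nabla g(x) = -\int h_j(y)\Big(y\cdot\int_0^1\nabla S_{k-1}f(x-\tau y)\,d\tau\Big)\Delta_k\nabla g(x-y)\,dy,
\]
where the factor $|y|\lesssim 2^{-j}$ cancels the $2^k\sim 2^j$ produced by the $\nabla$ landing on $\Delta_k g$. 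This yields the pointwise bound $2^{js}|[T_f,\Delta_j]\nabla g(x)|\lesssim \|\nabla f\|_{L^\infty}\sum_{|k-j|\le N_0}2^{ks}M(\Delta_k g)(x)$; taking $\ell^q$ in $j$, $L^p$ in $x$, and invoking Lemma~\ref{Fguji} furnishes the contribution $\|\nabla f\|_{L^\infty}\|g\|_{\dot F^s_{p,q}}$.

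The remainder pieces $T_{\Delta_j\nabla g}f-\Delta_j T_{\nabla g}f$ and $R(f,\Delta_j\nabla g)-\Delta_j R(f,\nabla g)$ have frequencies confined to $k\gtrsim j$, so using the Bernstein-type estimate $\|\Delta_k\nabla g\|_{L^\infty}\lesssim\|\nabla g\|_{L^\infty}$ together with the maximal-function bound for $\Delta_j$ produces
\[
2^{js}\big|(\mathrm{remainder})_j(x)\big|\lesssim \|\nabla g\|_{L^\infty}\sum_{k\ge j-N_0}2^{(j-k)s}\,2^{ks}M(\Delta_k f)(x).
\]
When $s>0$ the convolution kernel $\{2^{\ell s}\chi_{\{\ell\le N_0\}}\}_\ell$ lies in $\ell^1(\mathbb Z)$, so Young's inequality in $j$ yields the explicit constant $\sum_{j\in\mathbb Z}2^{js}\chi_{\{j\le 4\}}$ visible in \eqref{qudiaodiv=0+young}, and a second application of Lemma~\ref{Fguji} brings in $\|f\|_{\dot F^s_{p,q}}$; summing the constant then collapses \eqref{qudiaodiv=0+young} to \eqref{qudiaodiv=0}.

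For the divergence-free case, I would exploit the identity $f\cdot\nabla g=\mathrm{div}(fg)$ to rewrite $[f,\Delta_j]\cdot\nabla g=\mathrm{div}\,[f,\Delta_j]g$, and then repeat the paraproduct analysis on $[f,\Delta_j]g$; moving the derivative out of the commutator spares one order of regularity on $g$, lowering the threshold to $s>-1$ and producing the symmetric term $\|g\|_{L^\infty}\|\nabla f\|_{\dot F^s_{p,q}}$ in place of $\|\nabla g\|_{L^\infty}\|f\|_{\dot F^s_{p,q}}$. The main obstacle I foresee is keeping the bookkeeping for the low-high and high-high paraproduct terms tight enough so that the convolution exponent stays summable exactly in the range $s>0$ (respectively $s>-1$), since any wasted decay would spoil the sharpness of \eqref{qudiaodiv=0+young} and \eqref{divf=0jiaohanzi+young}, which in turn are what make the endpoint Triebel--Lizorkin theory in Theorems \ref{xianyanguji} and \ref{shuyunfangchengcunzaixing} possible.
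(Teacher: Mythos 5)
Your decomposition is the same as the paper's (the five Bony pieces $[T_{f^i},\Delta_j]\partial_i g$, $\Delta_jR(f^i,\partial_ig)$, $T_{\Delta_j\partial_ig}f^i$, $R(f^i,\Delta_j\partial_ig)$, $\Delta_jT_{\partial_ig}f^i$, merely grouped differently), and your treatment of the paraproduct commutator and of the remainder terms is sound for $s>0$ in the range $(p,q)\in(1,\infty)\times(1,\infty]$ or $p=q=\infty$. In fact, for $\Delta_jR(f,\nabla g)$ you avoid the paper's integration by parts by putting $\nabla g$ in $L^\infty$ and keeping $M(\Delta_m f)$, which produces the admissible term $\|\nabla g\|_{L^\infty}\|f\|_{\dot F^s_{p,q}}$; that is a legitimate simplification when $s>0$ (though it is precisely the step that cannot survive the passage to $s>-1$).

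The genuine gap is at the endpoints $p=1$ and $q=1$, which are part of the claimed range $[1,\infty)\times[1,\infty]$ and are one of the two advertised novelties of the proposition. Your entire passage to the $L^p(\ell^q)$ norm rests on Lemma~\ref{Fguji}, but the Fefferman--Stein inequality is \emph{false} for $p=1$ (the maximal operator is not bounded on $L^1$) and is stated only for $q\in(1,\infty]$; so the scheme "pointwise bound by $M(\Delta_k\cdot)$, then Lemma~\ref{Fguji}" collapses exactly where the extension is needed. The paper handles this with a separate case: every convolution with $h_j$ is estimated via Lemma~\ref{guoguji} with an exponent $r\in(0,1)$ (and, for the high--high term, an additional $\theta\in(0,1)$ with $s>d\theta/r$), producing $[M(|\cdot|^r)]^{1/r}$ so that the mixed norm becomes $\|\,\|M(F_m^r)\|_{\ell^{q/r}}\|_{L^{p/r}}^{1/r}$ with $p/r,q/r>1$, after which Lemma~\ref{Fguji} applies. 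Without this (or an equivalent device) your argument proves the proposition only on the previously known parameter set. Separately, your divergence-free case is only a sketch: the identity $[f,\Delta_j]\cdot\nabla g=\mathrm{div}([f,\Delta_j]g)$ is correct, but $[f,\Delta_j]g$ is not frequency-localized, so distributing the outer divergence over its paraproduct pieces and landing exactly on $\|g\|_{L^\infty}\|\nabla f\|_{\dot F^s_{p,q}}$ for all $s>-1$ still requires the term-by-term work (the paper instead keeps the original decomposition, observes that the $\Delta_m\mathrm{div}f$ term in $\Delta_jR(f,\partial_ig)$ vanishes, and uses Bernstein to shift one derivative onto $f$ in the remaining pieces). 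Finally, a small point: $|y|\lesssim 2^{-j}$ is not literally true on the support of the Schwartz kernel $h_j$; one must absorb $|y|$ into the kernel and invoke the radial-majorant Lemma~\ref{miaoguji} (or Lemma~\ref{guoguji}), as the paper does.
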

	\begin{remark}
		In our commutator estimate, we significantly relax the conditions required in \textup{\cite{MR2592288,MR4240785}}. Specifically, we extend the admissible range of parameters from $(p,q)\in(1,\infty)\times(1,\infty]$ or $p=q=\infty$ (see \textup{\cite{MR2592288}}) or $p=1,q\in[1,\infty]$ (see \textup{\cite{MR4240785}}) to the whole $(p,q)\in[1,\infty)\times[1,\infty]$ or $p=q=\infty$ and, more importantly, remove the condition $\textup{div}f=0$ for $s>0$ (see the following picture), which admits extension to broader systems, in particular to the compressible case.

	\vspace{-1.0em} 
	\[ \begin{tikzpicture}[
		scale=1,
		point/.style={circle, fill=black, inner sep=0.6pt}
		]
		% 第一部分：原始条件
		\begin{scope}[local bounding box=original]
			% 坐标轴
			\draw[->, thick] (0,0) -- (3.0,0) node[below] {$p$};
			\draw[->, thick] (0,0) -- (0,1.8) node[left] {$q$};
			
			% 刻度
			\draw (0.3,0.05) -- (0.3,-0.05) node[below, font=\tiny] {$1$};
			\draw (0.05,0.3) -- (-0.05,0.3) node[left, font=\tiny] {$1$};
			
			% 原始条件区域
			\draw[dashed, thick] (0.3,0.3) rectangle (3.0,1.8);
			\fill[pattern=north east lines, pattern color=red!20] 
			(0.3,0.3) rectangle (3.0,1.8);
			
			% 在区域中心添加文字 "div f=0"
			\node[red, align=center, font=\small\\] at (1.65, 1.05) {$\mathrm{div}\, f=0$};
			
			% 线段: p = 1, q ∈ [1,∞] (用实线表示)
			\draw[red, line width=1pt] (0.3,0.3) -- (0.3,1.8);
			\draw[red, line width=1pt] (0.3,1.8) -- (3.0,1.8);
			
			% 点: p = q = 1 (不标注，只用点表示)
			\fill[red] (0.3,0.3) circle (1.2pt);
			
			% 点: p = q = ∞
			\fill[red] (3.0,1.8) circle (1.2pt);
			\node[red, above, font=\tiny] at (3.0,1.8) {$(\infty,\infty)$};
			
			% 标题放在坐标轴下方
			\node[below, align=center, font=\tiny] at (1.5,-0.3) 
			{\textup{results in \cite{MR2592288,MR4240785}}};
		\end{scope}
		
		% 第二部分：拓展后条件
		\begin{scope}[local bounding box=extended, xshift=5cm]
			% 坐标轴
			\draw[->, thick] (0,0) -- (3.0,0) node[below] {$p$};
			\draw[->, thick] (0,0) -- (0,1.8) node[left] {$q$};
			
			% 刻度
			\draw (0.3,0.05) -- (0.3,-0.05) node[below, font=\tiny] {$1$};
			\draw (0.05,0.3) -- (-0.05,0.3) node[left, font=\tiny] {$1$};
			
			\draw[dashed, thick] (0.3,0.3) rectangle (3.0,1.8);
			\fill[pattern=north east lines, pattern color=blue!20] 
			(0.3,0.3) rectangle (3.0,1.8);
			
			% 拓展后条件区域
			\draw[blue, line width=1pt] (0.3,0.3) -- (3.0,0.3); % 下边
			\draw[blue, line width=1pt] (0.3,0.3) -- (0.3,1.8); % 左边
			\draw[blue, line width=1pt] (0.3,1.8) -- (3.0,1.8); % 上边
			
			% 点: p=q=∞
			\fill[blue] (3.0,1.8) circle (1.2pt);
			\draw[blue, fill=none, thin] (3.0,0.3) circle (1.2pt); 
			\node[blue, above, font=\tiny] at (3.0,1.8) {$(\infty,\infty)$};
			
			% 标题放在坐标轴下方
			\node[below, align=center, font=\tiny] at (1.5,-0.3) 
			{\textup{our results}};
		\end{scope}
		
		% 连接两部分的箭头 - 使用数学符号\Rightarrow
		\node[font=\large] at (3.95, 1.0) {$\Longrightarrow$};
		
	\end{tikzpicture} \]
	More precisely, based on the Hardy-Littlewood maximal function properties (see Lemmas \ref{miaoguji} and \ref{guoguji}), when estimating the commutator $[ f,\Delta_j]\cdot\nabla g$ for $s>0$ without assuming $\text{div} f = 0$, an extra term emerges during integration by parts. The main innovation here lies in establishing a precise estimate for this term, thereby eliminating the need for the divergence-free assumption and extending the existing theory to a broader class of problems.
	\end{remark} 
	 
	\subsection{Applications.}
	Note that the ideal MHD system can be recast in the form of transport equations via a suitable transformation. It is this well-known structural property that motivates the second main objective of this paper: to apply the general theory for the transport equation (Theorems \ref{xianyanguji} and \ref{shuyunfangchengcunzaixing}) to study the Cauchy problem for the incompressible ideal MHD system. We aim to revisit and extend the well-posedness theory in the framework of Triebel-Lizorkin spaces. Specifically, we provide a complete proof of local well-posedness in the sense of Hadamard, which, in addition to existence and uniqueness, includes the continuous dependence of the solution on the initial data—a crucial aspect not addressed in the prior work of  Chen, Miao and Zhang \cite{MR2592288} . Our approach, via the unified transport theory, not only offers a distinct perspective but also streamlines the proof of these results. Furthermore, we derive a precise blow-up criterion for this system. The versatility of our framework suggests its applicability to other systems, such as the Euler-Poincaré equations, shallow water wave equations, etc., which we plan to explore in a forthcoming paper. 
	
	The ideal MHD equations in $\mathbb{R}^d$ is given as follows:
	\begin{equation}\label{MHD}
		\begin{cases}
			u_t+u\cdot\nabla u=-\nabla p-\dfrac{1}{2}\nabla b^2+b\cdot\nabla b,\\
			b_t+u\cdot\nabla b=b\cdot\nabla u,\\
			\nabla\cdot u=\nabla \cdot b=0,\\
			u(0,x)=u_0(x), \quad b(0,x)=b_0(x),
		\end{cases}\tag{IMHD}
	\end{equation}
	where $t\in\mathbb{R}^+,x\in\mathbb{R}^d$, $u$ and $b$ denote the flow velocity vector and the magnetic field vector, respectively, $p$ is a scalar pressure, while $u_0$ and $b_0$ are the given initial data satisfying $\nabla\cdot u_0=\nabla\cdot b_0=0$. If we set
	\begin{equation*}
		z^+=u+b,\qquad z^-=u-b,
	\end{equation*}
	then \eqref{MHD} can be rewritten to the system for $z^+$ and $z^-$ as follows:
	\begin{equation}\label{yongMHD}
		\begin{cases}
			\partial_tz^++(z^-\cdot\nabla)z^+=-\nabla\pi,\\
			\partial_tz^-+(z^+\cdot\nabla)z^-=-\nabla\pi,\\
			\nabla\cdot z^+=\nabla  \cdot z^-=0,\\
			z^+(0)=z^+_0=u_0+b_0,\quad z^-(0)=z^-_0=u_0-b_0,			
		\end{cases}
	\end{equation}
	where $\pi=p+\frac{1}{2}b^2$. The well-posedness theory for the ideal MHD system has been extensively studied across various function spaces. For initial data $(u_0,b_0)\in H^s(\mathbb{R}^d)\times H^s(\mathbb{R}^d)$ with $ s>1+\frac{d}{2}$, the standard energy method \cite{MR748308} yields a unique local smooth solution $(u,b)\in C([0,T];H^s\times H^s)\cap C^1([0,T];H^{s-1}\times H^{s-1}).$ However, the question of whether such local solutions can be extended globally or will develop finite-time singularities remains an outstanding open problem. In the classical Sobolev spaces $W^{s,p}$, Alexseev \cite{MR752597} established fundamental existence and uniqueness results. In \cite{MR2275873}, Miao and Yuan prove that there exists a locally unique solution in the critical Besov space $B^{1+d/p}_{p,1}(\mathbb{R}^d)$ for $1\leq p\leq \infty$ provided that the initial data $(u_0,b_0)$ is in this critical space. Note that $B^{s-1}_{p,q}(\mathbb{R}^d)$ is a Banach algebra for $s>1+\frac{d}{p}$. One can easily prove that there exists a unique smooth solution $(u,b) \in C([0,T];B^s_{p,q}\times B^s_{p,q})\cap C^1([0,T];B^{s-1}_{p,q}\times B^{s-1}_{p,q})$ to \eqref{MHD} by standard method, see \cite{MR1664597} for details. In the framework of Triebel-Lizorkin spaces $F^s_{p,q}$, it is shown in \cite{MR2592288} that for $s>1+\frac{d}{p}$	and $1<p,q<\infty$, the system admits a unique solution $(u,b)\in C([0,T];F^s_{p,q}\times F^s_{p,q})$. 
	
	A central theme in this analysis is the derivation of blow-up criteria, which give conditions under which smooth solutions persist. The celebrated Beale-Kato-Majda blow-up criterion for the incompressible Euler equations \cite{MR763762} was extended to the ideal MHD system by Caflisch et al. in \cite{MR1462753}. More precisely, they showed that if the smooth solution $(u,b)\in C([0,T];H^s\times H^s )\cap C^1([0,T];H^{s-1} \times H^{s-1})$ satisfies the following condition in $H^s$ with $s\geq 3$:
	\begin{equation*}
		\int_{0}^T\big(\|\nabla\times u\|_{L^\infty}+\|\nabla\times b\|_{L^\infty}\big)dt<\infty,
	\end{equation*}
	then the solution $(u,b)$ can be extended beyond $t=T$. In the framework of Triebel-Lizorkin spaces, Chen, Miao and Zhang \cite{MR2592288} established that solutions stay smooth as long as
	\begin{equation*}
		\int_{0}^T\big(\|\nabla\times u\|_{\dot{F}^0_{\infty,\infty}}+\|\nabla\times b\|_{\dot{F}^0_{\infty,\infty}}\big)dt<\infty.
	\end{equation*}
	We refer to \cite{MR2299432,MR2173645} for the other refined blow-up criteria.
	
	Building upon this established landscape, the second main objective of this paper is to apply the general theory for transport equations (Theorems \ref{xianyanguji} and \ref{shuyunfangchengcunzaixing}) to establish the local well-posedness for the system \eqref{MHD} in the sense of Hadamard and to derive a corresponding blow-up criterion in the Triebel-Lizorkin spaces $F^s_{p,q}$. A principal difficulty arises in handling the critical regularity regimes, and this challenge is overcome by employing key technical lemmas (see Lemmas \ref{p=1Reszi} and \ref{guoguji}). Before stating our local well-posedness result, we introduce the following function spaces:
	\[ E^s_{p,q}(T)\triangleq
	\begin{cases}{}
		C([0,T];F^s_{p,q}(\mathbb{R}^d))\cap C^1([0,T];F^{s-1}_{p,q}(\mathbb{R}^d))	& \text{if } q< \infty,\\
		L^\infty([0,T];F^s_{p,\infty}(\mathbb{R}^d))\cap \text{Lip}([0,T];F^{s-1}_{p,\infty}(\mathbb{R}^d))  &\text{if } q=\infty.
	\end{cases}
	\]
	with $T>0,s\in\mathbb{R}$, and $(p,q)\in[1,\infty)\times[1,\infty]$ or $p=q=\infty$. Our second main results (Theorems \ref{MHD-Local-well-posedness} and \ref{Blow-up Criterion}) are then stated as follows.
	\begin{theorem}\label{MHD-Local-well-posedness}
		Let $(p,q)\in[1,\infty)\times[1,\infty]$ or $p=q=\infty$, $s>1+\frac{d}{p}$ when $p>1$ or $s\geq 1+d$ when $p=1$, and  $(u_0,b_0)\in F^s_{p,q}(\mathbb{R}^d)\times F^s_{p,q}(\mathbb{R}^d) $ satisfying $\textup{div}u_0=\textup{div}b_0=0$. Then there exists a time $T>0$ such that \eqref{MHD} has a unique solution $(u,b)\in E^s_{p,q}(T)\times E^s_{p,q}(T)$. Furthermore, the data-to-solution map $(u_0,b_0)\mapsto (u,b)$ is continuous from $D(R)\times D(R) $ into $ C([0,T],F^{s'}_{p,q}\times F^{s'}_{p,q})$ for every $s'<s$ if $q=\infty$, and $s=s'$ if $q<\infty$, where $D(R)\triangleq\{f\in F^s_{p,q}: \|f\|_{F^{s}_{p,q}}\leq R,\textup{div}f=0\}$.
	\end{theorem}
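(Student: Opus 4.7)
The plan is to pass to the Els\"{a}sser variables $z^{\pm} = u \pm b$, which turns \eqref{MHD} into the coupled system \eqref{yongMHD}: each equation is then a transport equation for $z^{\pm}$ with velocity $z^{\mp}$ and source $-\nabla\pi$. The divergence-free constraint $\textup{div}\,z^{\pm} \equiv 0$ forces $\Delta\pi = -\partial_j z^-_i\,\partial_i z^+_j$, so that $\nabla\pi = -\nabla\Delta^{-1}\textup{div}\,\textup{div}(z^-\otimes z^+)$ is the image of the tensor $z^-\otimes z^+$ under a zero-order Calder\'{o}n--Zygmund-type operator. Once the theorem is proved for $(z^+, z^-)$, the pair $(u,b) = \bigl(\tfrac{1}{2}(z^+ + z^-),\,\tfrac{1}{2}(z^+ - z^-)\bigr)$ inherits every conclusion.

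For existence I would run a Friedrichs iteration. Set $z^{\pm,0} = z^{\pm}_0 := u_0 \pm b_0$ and, inductively, let $z^{\pm,n+1}$ solve the linear transport system obtained by freezing $z^{\mp,n}$ in the advecting position, with $\pi^{n+1}$ chosen to enforce $\textup{div}\,z^{\pm,n+1} \equiv 0$. Each iterate belongs to $E^s_{p,q}(T)$ by Theorem \ref{shuyunfangchengcunzaixing} applied componentwise, provided that $\nabla\pi^{n+1}$ is controlled in $L^1(0,T;F^s_{p,q})$; this follows from the boundedness of $\nabla\Delta^{-1}\textup{div}\,\textup{div}$ on $F^s_{p,q}$ combined with the product law, which is valid under $s > 1 + d/p$ because $F^{s-1}_{p,q}$ is then a Banach algebra (in the endpoint $p=1$, $s=1+d$ the pressure estimate relies on Lemma \ref{p=1Reszi}). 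Inserting this bound into \eqref{xianyangujishizi} componentwise yields a Gr\"{o}nwall-type inequality
\[
\Phi^{n+1}(t) \le \Phi_0 + C\!\int_0^t \Phi^n(\tau)\,\Phi^{n+1}(\tau)\,d\tau,\qquad \Phi^m(t) := \|z^{+,m}(t)\|_{F^s_{p,q}} + \|z^{-,m}(t)\|_{F^s_{p,q}},
\]
from which a standard bootstrap produces a common time $T = T(\Phi_0) > 0$ and a uniform bound $\sup_n \sup_{t\in[0,T]} \Phi^n(t) \lesssim \Phi_0$.

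To extract a convergent subsequence I would estimate the differences $\delta z^{\pm,n} := z^{\pm,n+1} - z^{\pm,n}$ in the lower-regularity space $F^{s-1}_{p,q}$: they satisfy a linear transport system whose source depends on $\delta z^{\mp,n-1}$ both through the advection and through the pressure difference, and Theorem \ref{xianyanguji}(i) applied at regularity $s-1$ --- legitimate because $\nabla\delta z^{\pm,n}$ lies in $L^\infty(0,T;L^\infty)$ by the uniform $F^s_{p,q}$ bound and the embedding $F^{s-1}_{p,q}\hookrightarrow L^\infty$ available under $s > 1+d/p$ --- gives, after possibly shrinking $T$, a contraction of the form $\|\delta z^{\pm,n}\|_{L^\infty(0,T;F^{s-1}_{p,q})} \le \tfrac{1}{2}\|\delta z^{\pm,n-1}\|_{L^\infty(0,T;F^{s-1}_{p,q})}$. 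Hence $\{z^{\pm,n}\}$ is Cauchy in $L^\infty(0,T;F^{s-1}_{p,q})$, and the uniform $F^s_{p,q}$ bound together with a Fatou-type argument identifies the limit $z^{\pm}$ as a solution of \eqref{yongMHD} in $L^\infty(0,T;F^s_{p,q})$. Uniqueness follows from the same difference estimate; time continuity of $z^{\pm}$ in $F^s_{p,q}$ when $q<\infty$ (respectively in $F^{s'}_{p,1}$ for every $s'<s$ when $q=\infty$) is inherited from Theorem \ref{shuyunfangchengcunzaixing} applied to $z^{\pm}$ viewed as the unique solution of a linear transport equation whose coefficients and source are already known to be regular.

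The hard part will be the Hadamard continuity of the data-to-solution map on $D(R)\times D(R)$, especially in the critical endpoint regimes. Given $(u_0^k,b_0^k)\to(u_0,b_0)$ in $F^s_{p,q}\times F^s_{p,q}$ with all data in $D(R)$, the uniform lifespan from the existence step provides solutions $(u^k,b^k)$ on a common $[0,T]$, and the corresponding Els\"{a}sser differences satisfy a transport system of the same shape as before. Theorem \ref{xianyanguji}(i) applied at regularity $s-1$ yields convergence in $L^\infty(0,T;F^{s-1}_{p,q})$; interpolation against the uniform $F^s_{p,q}$ bound then upgrades this to convergence in $C([0,T];F^{s'}_{p,q})$ for every $s'<s$, which is exactly the claim for $q=\infty$. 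When $q<\infty$ the delicate step is to upgrade to convergence in $C([0,T];F^s_{p,q})$: the plan is to split $z^{\pm,k}-z^{\pm} = S_n(z^{\pm,k}-z^{\pm}) + (\textup{Id}-S_n)(z^{\pm,k}-z^{\pm})$, observe that the low-frequency part converges strongly thanks to the $F^{s-1}_{p,q}$ convergence, and show that the high-frequency tail satisfies $\sup_k \sup_{t\in[0,T]}\|(\textup{Id}-S_n)(z^{\pm,k}-z^{\pm})\|_{F^s_{p,q}} \to 0$ as $n\to\infty$. As in the remark following Theorem \ref{shuyunfangchengcunzaixing}, the latter is obtained by using the refined commutator estimate of Proposition \ref{zijixiedejiaohuanzi} together with Lemma \ref{guoguji} (and Lemma \ref{p=1Reszi} in the critical case $p=1$) to control $[z^{\mp,k},\Delta_j]\cdot\nabla z^{\pm,k}$ uniformly in $k$. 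Controlling high-frequency tails uniformly along the approximating sequence despite the $l^q$-before-$L^p$ ordering of the $F^s_{p,q}$ norm is precisely where the non-commutative structure of the Triebel--Lizorkin scale constitutes the central technical obstacle.
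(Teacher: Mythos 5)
Your overall architecture coincides with the paper's: Els\"asser variables, an iteration scheme solved by Theorem \ref{shuyunfangchengcunzaixing}, uniform bounds with the pressure handled by the $\dot F^s_{p,q}$-boundedness of $\nabla\Delta^{-1}\mathrm{div}\,\mathrm{div}$ plus Lemma \ref{p=1Reszi} for the low frequencies, a difference estimate at regularity $s-1$, and a Bona--Smith-type argument for continuous dependence. There is, however, one concrete error in the tool you invoke for the contraction and stability steps. You propose to apply Theorem \ref{xianyanguji}(i) at regularity $s-1$ to the differences $\delta z^{\pm,n}$ (and later to $z^{\pm,k}-z^{\pm}$). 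Estimate \eqref{feiqiciF-priori-estimates} applied with $f=\delta z^{+,(n+1)}$, $v=z^{-,(n)}$ at level $s-1$ produces the term
\begin{equation*}
\int_0^t \|\nabla \delta z^{+,(n+1)}\|_{L^\infty}\,\|z^{-,(n)}\|_{F^{s-1}_{p,q}}\,d\tau .
\end{equation*}
Under the standing hypothesis $s>1+\tfrac{d}{p}$ (or $s\geq 1+d$, $p=1$) one has $F^{s-1}_{p,q}\hookrightarrow L^\infty$, so $\|\nabla \delta z\|_{L^\infty}$ is controlled by $\|\delta z\|_{F^{s}_{p,q}}$ --- i.e.\ by the uniform bound, a constant --- but \emph{not} by $\|\delta z\|_{F^{s-1}_{p,q}}$; that would require $s>2+\tfrac{d}{p}$. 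Consequently the right-hand side contains a non-vanishing constant and the claimed inequality $\|\delta z^{\pm,n}\|_{L^\infty_T(F^{s-1}_{p,q})}\leq \tfrac12\|\delta z^{\pm,n-1}\|_{L^\infty_T(F^{s-1}_{p,q})}$ does not follow; the same defect invalidates your uniqueness and continuous-dependence estimates as stated. The repair is immediate and is exactly what the paper does: since the advecting fields $z^{\mp,(n)}$ are divergence free, use part (iv), estimate \eqref{feiqiciF-priori-estimates-divv=0}, whose problematic term is instead $\|\delta z\|_{L^\infty}\|\nabla z^{\mp,(n)}\|_{F^{s-1}_{p,q}}\lesssim \|\delta z\|_{F^{s-1}_{p,q}}\|z^{\mp,(n)}\|_{F^{s}_{p,q}}$, and the contraction closes.

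Two smaller remarks. First, for the continuous dependence when $q<\infty$ you split the solution difference into low and high frequencies and aim at a uniform-in-$k$ decay of the high-frequency tail via Remark \ref{s>n-proposition}; the paper instead regularizes the \emph{data} ($S_{N+1}z_0^{\pm}$), uses the $F^{s}_{p,q}$-stability estimate \eqref{equation11} for the regularization error, and interpolates the smooth-data solution differences between $F^{s-1}_{p,q}$ and $F^{s+1}_{p,q}$ (Lemma \ref{lem:Triebel-Lizorkin-properties}(iv) together with Lemma \ref{lem:Snguji}), trading a factor $2^{N/2}$ against the smallness of $\|z_0-\tilde z_0\|$. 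Both are legitimate Bona--Smith variants; yours additionally requires the uniform tail decay of the data sequence, which holds along a convergent sequence in $F^s_{p,q}$ with $p,q<\infty$. Second, the paper takes $S_{n+2}(u_0,b_0)$ as initial data for the $(n+1)$-st iterate, so that $\delta z^{\pm,(n+1)}(0)=\Delta_{n+1}z_0^{\pm}$ yields the geometric decay $2^{-n}$ directly; your choice of unsmoothed data with zero initial difference works equally well once the contraction is in place.
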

		\begin{theorem}\label{Blow-up Criterion}
		The local-in-time solution $(u,b)\in C([0,T];F^s_{p,q}(\mathbb{R}^d)\times F^s_{p,q}(\mathbb{R}^d))$ was constructed in Theorem \ref{MHD-Local-well-posedness}. If the lifespan of the solution $T^*$ is finite, then
		\begin{equation}\label{blow-up1}
			\int_{0}^{T^*}\big(\|u\|_{L^\infty}+\|b\|_{L^\infty}+\|\nabla u\|_{L^\infty}+\|\nabla b\|_{L^\infty}\big)dt=+\infty.
		\end{equation}
		Moreover, 
		\begin{enumerate}[label={(\roman*)}]
			\item if $(p,q)\in[1,\infty)\times[1,\infty]$ or $p=q=\infty$ and $s>1+\frac{d}{p}$, then
			\begin{equation}\label{blow-up2}
				\int_{0}^{T^*}\big(\| u\|_{L^\infty}+\|b\|_{L^\infty}+\|(\nabla\times u)(t)\|_{\dot{F}^0_{\infty,\infty}}+\|(\nabla\times b)(t)\|_{\dot{F}^0_{\infty,\infty}}\big)dt=+\infty.
			\end{equation}
			\item if $(p,q)\in(1,\infty)\times[1,\infty]$ and $s>1+\frac{d}{p}$, then
			\begin{equation}\label{blow-up3}
				\int_{0}^{T^*}\big(\|(\nabla\times u)(t)\|_{\dot{F}^0_{\infty,\infty}}+\|(\nabla\times b)(t)\|_{\dot{F}^0_{\infty,\infty}}\big)dt=+\infty.
			\end{equation}
		\end{enumerate}
	\end{theorem}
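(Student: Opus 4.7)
The plan is to derive the three criteria in cascade, each by contradiction: assuming the corresponding integral is finite yet $T^*<\infty$, I aim to show that $Y(t):=\|z^+\|_{F^s_{p,q}}+\|z^-\|_{F^s_{p,q}}$ stays bounded up to $T^*$, where $z^\pm=u\pm b$ solve the symmetric system \eqref{yongMHD}. Since $\mathrm{div}\,z^\pm=0$ is preserved, this uniform bound, combined with the local existence part of Theorem \ref{MHD-Local-well-posedness}, would permit extension of the solution past $T^*$, contradicting the definition of the lifespan.

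For criterion (i), I would apply Theorem \ref{xianyanguji}(i) to each of the two transport equations in \eqref{yongMHD} and sum, obtaining
\begin{equation*}
Y(t)\leq Y(0)+C\int_0^t\|\nabla\pi\|_{F^s_{p,q}}\,d\tau + C\int_0^t\bigl(\|\nabla z^+\|_{L^\infty}+\|\nabla z^-\|_{L^\infty}\bigr)Y(\tau)\,d\tau.
\end{equation*}
The pressure is then controlled via the elliptic identity $-\Delta\pi=\partial_i\partial_j(z^-_j z^+_i)$ (obtained by taking the divergence of \eqref{yongMHD} and using $\mathrm{div}\,z^\pm=0$), boundedness of the second-order Riesz transforms on $F^s_{p,q}$, and the Moser-type product rule for Triebel--Lizorkin spaces, yielding $\|\nabla\pi\|_{F^s_{p,q}}\lesssim(\|z^+\|_{L^\infty}+\|z^-\|_{L^\infty})Y(\tau)$. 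Gr\"onwall's lemma, combined with $\|z^\pm\|_{L^\infty}\lesssim\|u\|_{L^\infty}+\|b\|_{L^\infty}$ and the analogous inequality for gradients, then closes the argument under \eqref{blow-up1}.

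Criterion (ii) I would derive from (i) by invoking the logarithmic Triebel--Lizorkin interpolation inequality for divergence-free fields,
\begin{equation*}
\|\nabla u\|_{L^\infty}\lesssim 1+\|u\|_{L^\infty}+\|\nabla\times u\|_{\dot{F}^0_{\infty,\infty}}\log\bigl(e+\|u\|_{F^s_{p,q}}\bigr),
\end{equation*}
and analogously for $b$. Substituting into the Gr\"onwall estimate from (i) produces an inequality of the form $Y(t)\leq Y(0)+\int_0^t W(\tau)\,Y(\tau)\log(e+Y(\tau))\,d\tau$ with $W$ controlled by the quantities in \eqref{blow-up2}; Osgood's lemma then yields a (double-exponential) bound on $Y$. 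Criterion (iii) follows from (ii) by exploiting the full $L^p$-boundedness of Riesz transforms available for $p\in(1,\infty)$: via the Biot--Savart law one has $\|u\|_{F^s_{p,q}}+\|b\|_{F^s_{p,q}}\lesssim\|\nabla\times u\|_{F^{s-1}_{p,q}}+\|\nabla\times b\|_{F^{s-1}_{p,q}}$ up to a low-frequency part handled by interpolation, which permits the $\|u\|_{L^\infty}+\|b\|_{L^\infty}$ contribution in (ii) to be absorbed into the logarithmic factor.

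The main obstacle I anticipate is maintaining all these estimates uniformly across the full parameter range, especially at the endpoint $p=q=\infty$ (where Riesz transforms send $L^\infty$ only to $BMO=\dot{F}^0_{\infty,2}$) and in the critical regime $s=1+d$ with $p=1$. In these borderline cases I expect Lemma \ref{p=1Reszi} and the vector-valued maximal-function inequality (Lemma \ref{guoguji}) of the paper to be decisive, in close parallel with the role they play in establishing Theorem \ref{MHD-Local-well-posedness}.
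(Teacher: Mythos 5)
Your overall strategy coincides with the paper's: estimate $Y=\|z^+\|_{F^s_{p,q}}+\|z^-\|_{F^s_{p,q}}$ via Theorem \ref{xianyanguji}, control the pressure by Riesz transforms and the Moser-type estimate (Proposition \ref{moser-type-estimate}), and upgrade to the $\dot{F}^0_{\infty,\infty}$ criteria through the logarithmic inequality (Lemma \ref{lem:logarithmic-Triebel-Lizorkin-space-inequality}) and Osgood's lemma. Two points, however, need correction. First, the pressure bound $\|\nabla\pi\|_{F^s_{p,q}}\lesssim(\|z^+\|_{L^\infty}+\|z^-\|_{L^\infty})Y$ is not correct as stated: since $\nabla(-\Delta)^{-1}\partial_i\partial_j$ has order one, estimating $\|\nabla\pi\|_{\dot{F}^s_{p,q}}$ through the product $z_i^-z_j^+$ would require $\|z^-z^+\|_{\dot{F}^{s+1}_{p,q}}$, which $\|z\|_{L^\infty}Y$ does not control. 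One must use the divergence-free structure to write $-\Delta\pi=\partial_jz_i^-\partial_iz_j^+$, which gives $\|\nabla\pi\|_{\dot{F}^s_{p,q}}\lesssim\|\nabla z^-\|_{L^\infty}\|z^+\|_{\dot{F}^s_{p,q}}+\|\nabla z^+\|_{L^\infty}\|z^-\|_{\dot{F}^s_{p,q}}$, and treat the low frequencies separately via Lemma \ref{p=1Reszi}; it is precisely this low-frequency piece that produces the $\|z^\pm\|_{L^\infty}\|z^\mp\|_{L^p}$ terms, hence the $\|u\|_{L^\infty}+\|b\|_{L^\infty}$ appearing in \eqref{blow-up1} and \eqref{blow-up2}. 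This imprecision does not damage \eqref{blow-up1}, because the gradient terms are already present in the transport estimate, but identifying the true source of the $L^\infty$ terms is essential for the last step.

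Second, and this is the genuine gap, your derivation of \eqref{blow-up3} does not work. Absorbing $\|u\|_{L^\infty}+\|b\|_{L^\infty}$ into the logarithmic factor via the Biot--Savart law $\|u\|_{F^s_{p,q}}\lesssim\|\nabla\times u\|_{F^{s-1}_{p,q}}+\cdots$ is not available: that inequality concerns the high-order norm inside the logarithm, not the coefficient $W(\tau)$ multiplying $Y\log(e+Y)$ in the Osgood inequality, and there is no bound of $\|u\|_{L^\infty}$ (or even of $\|u\|_{\dot{F}^0_{\infty,\infty}}$) by $\|\nabla\times u\|_{\dot{F}^0_{\infty,\infty}}$, since inverting the curl loses all control at low frequencies. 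The correct mechanism, as in the paper, is to note that the $\|z^\pm\|_{L^\infty}$ terms enter only through the low-frequency pressure estimate, and that for $1<p<\infty$ this can be bypassed entirely: writing $\nabla\pi=\nabla(-\Delta)^{-1}\partial_j\bigl(z_i^-\partial_iz_j^+\bigr)$, the operator $\nabla(-\Delta)^{-1}\partial_j$ is of order zero and bounded on $L^p$, whence $\|\nabla\pi\|_{L^p}\lesssim\|\nabla z^+\|_{L^\infty}\|z^-\|_{L^p}+\|\nabla z^-\|_{L^\infty}\|z^+\|_{L^p}$ and the Osgood coefficient reduces to $\|\nabla\times z^\pm\|_{\dot{F}^0_{\infty,\infty}}$ alone. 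With this replacement (and noting that Lemma \ref{lem:logarithmic-Triebel-Lizorkin-space-inequality} applied to $\nabla z^\pm$ requires $s-1>\frac{d}{p}$, consistent with the hypotheses of parts (i) and (ii)), your argument matches the paper's.
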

	\begin{remark}
		Theorem \ref{MHD-Local-well-posedness} provides a more complete picture than earlier results. It refines and extends Theorem 1.1 of \textup{\cite{MR2592288}} by simultaneously encompassing both 
		the subcritical case ($1<p,q<\infty$) treated in \textup{\cite{MR2592288}}
		 and the critical case ($p=1,q\in[1,\infty]$). Furthermore, it establishes the continuous dependence of the solutions on the initial data, a property not addressed in the aforementioned work.
	\end{remark}
	\begin{remark}
		As mentioned before, $W^{s,p}=F^s_{p,2}(1<p<\infty,s\in\mathbb{R})$  and $\mathscr{C}^s=F^s_{\infty,\infty}(s>0)$. So, our Theorem \ref{MHD-Local-well-posedness} implies the local well-posedness for \eqref{MHD} in the Sobolev spaces and H\"{o}lder-Zygmund spaces. Besides, Theorem \ref{Blow-up Criterion} covers and generalizes the blow-up criterion results in \textup{\cite{MR1462753,MR2299432,MR2592288,MR2173645}}.
	\end{remark}
	\begin{remark}
		Note that when the magnetic field $b=0$, \eqref{MHD} reduces to the incompressible Euler equations. Thus, Theorems \ref{MHD-Local-well-posedness} and \ref{Blow-up Criterion} provides local well-posedness and blow-up criterion for the Euler equations in the Triebel-Lizorkin spaces $F^s_{p,q}$. Our results cover the classical well-posedness theory in \textup{\cite{MR1880646,MR2020259,MR1688875,MR4240785,MR951744}} and encompass key blow-up criteria in \textup{\cite{MR763762,MR1880646,MR1980623,MR1794270}}, the latter connection is clarified via the embedding relations $L^\infty\hookrightarrow BMO\hookrightarrow \dot{F}^0_{\infty,\infty}=\dot{B}^0_{\infty,\infty}$. 
	\end{remark}
	
	The remainder of the paper is organized as follows. In Section 2, we recall some facts
	on the Littlewood-Paley analysis, Triebel-Lizorkin spaces, Hardy-Littlewood maximal functions, and Morse type inequalities as well. In Section 3, we develop commutator estimates in the homogeneous Triebel-Lizorkin spaces. In Section 4, we establish a priori estimates (Theorem \ref{xianyanguji}) for the transport equations \eqref{Eq}. In Section 5, we prove the local well-posedness (Theorem \ref{shuyunfangchengcunzaixing}) for transport equations in the Triebel-Lizorkin spaces. Finally, in Section 6, we apply Theorems \ref{xianyanguji} and \ref{shuyunfangchengcunzaixing} from Sections 4 and 5 to \eqref{MHD} or system \eqref{yongMHD}, yielding its local well-posedness (Theorem \ref{MHD-Local-well-posedness}) and blow-up criteria (Theorem \ref{Blow-up Criterion}).

	\section{Preliminaries}\label{Preliminaries}
	We introduce the Littlewood-Paley decomposition. Let $\mathscr{B}=\{\xi\in\mathbb{R}^d\colon|\xi|\leq \frac{4}{3} \}$ and $\mathscr{C}=\{\xi\in\mathbb{R}^d\colon\frac{3}{4}\leq |\xi|\leq \frac{8}{3} \}$, $(\varphi,\chi)$ be a couple of smooth functions valued in $[0,1]$, such that $\varphi$ is support in  $\mathscr{C}$, $\chi$ is supported in $\mathscr{B}$ and 
	\[ \forall\, \xi \in \mathbb{R}^d,\quad \chi(\xi)+\sum_{j\in\mathbb{N}}\varphi(2^{-j}\xi)=1, \]
	\[ \forall\, \xi \in \mathbb{R}^d\setminus\{0\},\quad \sum_{j\in\mathbb{Z}}\varphi(2^{-j}\xi)=1. \]
	We denote $\varphi_j(\xi)=\varphi(2^{-j}\xi),h=\mathscr{F}^{-1}\varphi$ and $\tilde{h}=\mathscr{F}^{-1}\chi$. Denote $\dot{\mathscr{S}}'(\mathbb{R}^d)$ as the dual space of $\dot{\mathscr{S}}(\mathbb{R}^d)\triangleq\{f\in \mathscr{S}(\mathbb{R}^d)\colon\partial^\alpha\hat{f}(0)=0,\forall\text{ multi-index }\alpha\in\mathbb{N}^d \}$ with the Schwartz space $\mathscr{S}(\mathbb{R}^d)$. For $f\in\dot{\mathscr{S}}'(\mathbb{R}^d)$, one can define dyadic blocks as follows:
	\begin{equation}\label{Delta_jdingyi}
		\Delta_jf=\varphi(2^{-j}D)f=2^{jd}\int_{\mathbb{R}^d}h(2^jy)f(x-y)dy,
	\end{equation}
	\begin{equation}\label{S_jdingyi}
		S_jf=\sum_{k\leq j-1}\Delta_kf=\chi(2^{-j}D)f=2^{jd}\int_{\mathbb{R}^d}\tilde{h}(2^jy)f(x-y)dy.
	\end{equation}
	It is easy to see $\Delta_j=S_j-S_{j-1}$. For all $f,g\in \dot{\mathscr{S}}'(\mathbb{R}^d)$, we observe
	\begin{equation}\label{a.o.c}
		\Delta_j\Delta_kf\equiv 0\quad \text{if} \quad |j-k|\geq 2\quad\text{and}\quad\Delta_j(\Delta_{k-1}f\Delta_kg)\equiv0\quad\text{if}\quad k\leq j-1,
	\end{equation}
	\begin{equation}\label{a.o.c-1}
		 S_j\Delta_kf\equiv0\quad\text{if}\quad j\leq k-1\quad\text{and}\quad \Delta_j(S_{k-1}f\Delta_kg)\equiv 0\quad \text{if} \quad |j-k|\geq 5.
	\end{equation}
	
	\begin{remark}\label{remark1}
		For each $j\in\mathbb{Z}$, we have $\|\Delta_jf\|_{L^p},\|S_jf\|_{L^p}\leq C\|f\|_{L^p}$ for some positive constant C independent of $j$.
	\end{remark}
	\begin{lemma}\label{Bernstein}
		\textup{\cite{MR2768550}}(Bernstein's inequalities) Let $\mathcal{B}$ be a ball and $\mathcal{C}$ an annulus. A constant $C>0$ exists such that for any $k\in\mathbb{N},1\leq p\leq q\leq \infty$, and any function $f\in L^p(\mathbb{R}^d)$, we have
		\[ \text{Supp}\hat{f}\subset\lambda\mathcal{B}\Rightarrow\|D^kf\|_{L^q}\triangleq\sup_{|\alpha|=k}\|\partial^\alpha f\|_{L^q}\leq C^{k+1}\lambda  ^{k+d(\frac{1}{p}-\frac{1}{q})}\|f\|_{L^p}, \] 
		\[ \text{Supp}\hat{f}\subset\lambda\mathcal{C}\Rightarrow C^{-k-1}\lambda^k\|f\|_{L^p}\leq\|D^kf\|_{L^p}\leq C^{k+1}\lambda  ^{k}\|f\|_{L^p}. \] 
	\end{lemma}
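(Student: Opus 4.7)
The strategy rests on the observation that any tempered distribution $f$ whose Fourier transform is supported in $\lambda\mathcal{K}$ (for a compact set $\mathcal{K}$) reproduces itself as a convolution with a dilated Schwartz kernel: picking a smooth cutoff $\tilde\chi$ equal to $1$ on $\mathcal{K}$, one has $\hat f = \tilde\chi(\cdot/\lambda)\hat f$. Both claimed inequalities then reduce to a careful application of Young's convolution inequality combined with the rescaling identity $\|g(\lambda\cdot)\|_{L^r}=\lambda^{-d/r}\|g\|_{L^r}$.

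For the first (ball) inequality I would pick $\tilde\chi\in C_c^\infty(\mathbb{R}^d)$ with $\tilde\chi\equiv1$ on $\mathcal{B}$. For any multi-index $\alpha$ with $|\alpha|=k$,
\[
\partial^\alpha f \;=\; \mathscr{F}^{-1}\bigl[(i\xi)^\alpha \tilde\chi(\xi/\lambda)\bigr]*f \;=\; \lambda^{d+k}\,\psi_\alpha(\lambda\,\cdot)*f,
\]
where $\psi_\alpha:=\mathscr{F}^{-1}[(i\xi)^\alpha\tilde\chi]\in\mathscr{S}(\mathbb{R}^d)$. Young's inequality with $1+1/q=1/r+1/p$ then gives $\|\partial^\alpha f\|_{L^q}\le \lambda^{k+d(1/p-1/q)}\|\psi_\alpha\|_{L^r}\|f\|_{L^p}$. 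A Leibniz expansion of $(i\xi)^\alpha\tilde\chi$ combined with the rapid decay of $\tilde\chi$ and its derivatives shows that $\|\psi_\alpha\|_{L^r}$ grows at worst as $C^{k+1}$, and taking the supremum over $|\alpha|=k$ finishes the ball estimate. The annulus upper bound is identical, with $\tilde\chi$ replaced by a cutoff $\tilde\varphi\in C_c^\infty(\mathbb{R}^d\setminus\{0\})$ equal to $1$ on $\mathcal{C}$.

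The only genuinely delicate step, and what I expect to be the main obstacle, is the \emph{lower} bound in the annulus case, where one must invert differentiation on the frequency annulus. I would proceed by induction on $k$. For $k=1$, define the smooth compactly supported symbols $m_i(\xi):=-i\xi_i|\xi|^{-2}\tilde\varphi(\xi)$, so that $\eta_i:=\mathscr{F}^{-1}m_i\in\mathscr{S}$ and $\sum_i m_i(\xi)(i\xi_i)=\tilde\varphi(\xi)$. Since $\tilde\varphi(\cdot/\lambda)\hat f=\hat f$, we recover
\[
f=\sum_{i=1}^{d}\lambda^{-1}\bigl(\lambda^d\eta_i(\lambda\cdot)\bigr)*\partial_i f,
\]
and Young's inequality yields $\lambda\|f\|_{L^p}\le C\|Df\|_{L^p}$. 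Iterating this estimate $k$ times converts $\lambda^k\|f\|_{L^p}$ into a bound by $\|D^k f\|_{L^p}$, and the constants combine into the stated geometric factor $C^{k+1}$. This iteration works uniformly in $\lambda$ because every step reuses the same fixed multipliers $m_i$, whose $L^1$ inverse Fourier transform is a dimensional constant, and the relevant $L^1$ norm is preserved by the scaling $\eta_i\mapsto\lambda^d\eta_i(\lambda\cdot)$; tracking this bookkeeping carefully is all that remains.
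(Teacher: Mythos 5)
The paper does not prove this lemma at all---it is quoted from the cited reference \cite{MR2768550}---and your argument is precisely the standard proof given there: reproducing $f$ as a convolution with a dilated cutoff kernel and applying Young's inequality for the two upper bounds, and inverting the derivative on the annulus via the multipliers $-i\xi_i|\xi|^{-2}\tilde\varphi(\xi)$, iterated $k$ times, for the lower bound. The only cosmetic point is that $\tilde\chi$ (resp.\ $\tilde\varphi$) should equal $1$ on a \emph{neighborhood} of $\mathcal{B}$ (resp.\ $\mathcal{C}$), not merely on the set itself, so that $\tilde\chi(\cdot/\lambda)\hat f=\hat f$ is valid for distributions supported in the closed set; with that adjustment your proof is complete and correct.
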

	\begin{lemma}\label{p=1Reszi}
		\textup{\cite{MR4240785,MR1232192}} Let $m(\xi)$ be the Fourier symbol of the operator $S_{0}(-\Delta)^{-1}\allowbreak\partial_l\partial_k$, $1\leq k,l\leq d$, then there exists a constant C, such that
		\begin{equation*}
\|\mathscr{F}^{-1}\left(m(\xi)\xi_i\right)\|_{L^1(\mathbb{R}^d)}\leq C,\quad \forall\text{ } 1\leq i\leq d.
		\end{equation*}
	\end{lemma}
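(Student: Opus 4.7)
The overall plan is to convert the non-smoothness of the symbol $m(\xi)\xi_i = -\chi(\xi)\,\xi_l\xi_k\xi_i/|\xi|^2$ at the origin into a summable geometric series by applying the Littlewood--Paley partition of unity to the symbol itself. Note that $m(\xi)\xi_i$ is bounded, compactly supported in $\{|\xi|\leq 4/3\}$, and vanishes linearly at the origin, but only lies in $W^{1,\infty}$; this non-smoothness at $\xi=0$ is the sole obstruction to Schwartz decay of its inverse Fourier transform.

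The first step is to decompose
\[ m(\xi)\xi_i = \sum_{j\in\mathbb{Z}}\varphi(2^{-j}\xi)\,m(\xi)\xi_i. \]
The compact support of $m\xi_i$ forces the summand to vanish identically for $j\geq 1$, so effectively the sum is over $j\leq 0$. I would then split these dyadic pieces into two groups: a finite middle block ($j\in\{-1,0\}$) whose supports lie in annuli bounded away from the origin, so that each piece is in $C_c^\infty$, has Schwartz inverse Fourier transform, and contributes a bounded constant to the $L^1$ norm; and the low-frequency tail $j\leq -2$, which contains the real work.

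For this tail, the key observation is that for every $j\leq -2$ the support of $\varphi(2^{-j}\cdot)$ sits strictly inside $\{|\xi|\leq 2/3\}$, on which the inhomogeneous partition of unity $\chi(\xi)+\sum_{k\geq 0}\varphi(2^{-k}\xi)=1$ forces $\chi\equiv 1$. Hence the cutoff $\chi$ drops out on this support, and the $j$th piece becomes an exact dilation of a single fixed $C_c^\infty$ function:
\[ \varphi(2^{-j}\xi)\,\frac{\xi_l\xi_k\xi_i}{|\xi|^2} \;=\; 2^j\,g(2^{-j}\xi), \qquad g(\eta):=\varphi(\eta)\,\frac{\eta_l\eta_k\eta_i}{|\eta|^2}\in C_c^\infty(\mathbb{R}^d). \]
The standard Fourier dilation rule then gives $\|\mathscr{F}^{-1}(2^jg(2^{-j}\cdot))\|_{L^1}=2^j\|\mathscr{F}^{-1}g\|_{L^1}$, and summing the geometric series $\sum_{j\leq -2}2^j<\infty$ together with the finite middle-block contribution yields the desired uniform bound.

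The main (rather mild) obstacle I anticipate is the bookkeeping needed to verify $\chi\equiv 1$ on the dyadic annuli $\{|\xi|\sim 2^j\}$ for $j$ sufficiently negative; this is precisely what allows the inhomogeneous cutoff to drop out and produces the clean scaling factor $2^j$, turning a potentially borderline estimate into a convergent geometric series.
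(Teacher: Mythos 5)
Your argument is correct. A small point worth noting first: the paper does not actually prove Lemma \ref{p=1Reszi}; it is quoted from \cite{MR4240785,MR1232192}, so there is no internal proof to compare against. Your proposal supplies a complete, self-contained argument, and it is the standard one for symbols of this type: write $m(\xi)\xi_i=-\chi(\xi)\xi_l\xi_k\xi_i/|\xi|^2$, insert the homogeneous partition $\sum_j\varphi(2^{-j}\xi)=1$, observe that the terms with $j\geq1$ vanish by support considerations, that the finitely many middle terms are $C^\infty_c$ with Schwartz (hence $L^1$) inverse Fourier transforms, and that for $j\leq-2$ the support of $\varphi(2^{-j}\cdot)$ lies in $\{|\xi|\leq 2/3\}$ where $\chi\equiv1$, so each piece is the exact dilate $2^jg(2^{-j}\xi)$ of the fixed function $g(\eta)=\varphi(\eta)\eta_l\eta_k\eta_i/|\eta|^2\in C^\infty_c$; the dilation invariance of the $L^1$ norm then gives the summable bound $2^j\|\mathscr{F}^{-1}g\|_{L^1}$. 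All the support bookkeeping checks out with the paper's conventions ($\mathrm{supp}\,\chi\subset\{|\xi|\leq4/3\}$, $\mathrm{supp}\,\varphi\subset\{3/4\leq|\xi|\leq8/3\}$). The only step you leave implicit is the interchange of the infinite sum with $\mathscr{F}^{-1}$; this is routine — the partial sums of the symbols converge to $m(\xi)\xi_i$ in $L^1(d\xi)$ by dominated convergence (they are dominated by the integrable function $|m(\xi)\xi_i|$), while the corresponding kernels converge absolutely in $L^1(dx)$ by your geometric series, and the two limits agree since $\mathscr{F}^{-1}$ is continuous from $L^1$ to $L^\infty$ — but you should say a word about it if you write this up.
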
 
	Let us recall the definition of Triebel-Lizorkin spaces \cite{MR781540,MR1163193}. Let $s\in\mathbb{R},(p,q)\in [1,\infty)\times[1,\infty]$ or $p=q=\infty$. The homogeneous Triebel-Lizorkin space $\dot{F}^s_{p,q}(\mathbb{R}^d)$\footnote{For the endpoint case $p=q=\infty$, $\dot{F}^s_{\infty,\infty}$ is equipped with the norm
		\[ \|f\|_{\dot{F}^s_{\infty,\infty}}=\sup_{j\in \mathbb{Z}}2^{js}\|\Delta_jf\|_{L^\infty}, \]
		which coincides with the Besov spaces $\dot{B}^s_{\infty,\infty}$.} ($\dot{F}^s_{p,q}$ for short) is defined by 
	\begin{equation*}
		\dot{F}^s_{p,q}(\mathbb{R}^d)=\{f\in \dot{\mathscr{S}}'(\mathbb{R}^d)\colon \|f\|_{\dot{F}^s_{p,q}}<\infty\},
	\end{equation*}
	where
	\begin{equation}\label{qiciTLkongjiandingyi}
		\|f\|_{\dot{F}^s_{p,q}}=\Big\|\big\|2^{js}|\Delta_jf|\big\|_{l^q(j\in\mathbb{Z})}\Big\|_{L^p}.
	\end{equation}
	For $s>0$, $(p,q)\in [1,\infty)\times[1,\infty]$ or $p=q=\infty$, we define the inhomogeneous Triebel-Lizorkin space $F^s_{p,q}(\mathbb{R}^d)$ ($F^s_{p,q}$ for short) as follows:
	\begin{equation*}
		F^s_{p,q}(\mathbb{R}^d)=\{f\in \mathscr{S}'(\mathbb{R}^d)\colon \|f\|_{F^s_{p,q}}<\infty\},
	\end{equation*}
	where
	\begin{equation*}
		\|f\|_{F^s_{p,q}}=\|f\|_{L^p}+\|f\|_{\dot{F}^s_{p,q}}.
	\end{equation*}
	 The inhomogeneous Triebel-Lizorkin space is a Banach space equipped with the norm $\|\cdot\|_{F^s_{p,q}}$. 
	 
	 \begin{lemma}\label{lem:Triebel-Lizorkin-properties}
	 	\textup{\cite{MR1419319,MR781540}}The following properties for Triebel-Lizorkin spaces hold:
	 	\begin{enumerate}[label={(\roman*)}]
	 		\item Embedding: Let $s\in\mathbb{R},\varepsilon>0$ and suppose $q_0<q_1$. Then
	 		\[ F^s_{p,q_0}\hookrightarrow F^s_{p,q_1},\qquad F^{s+\varepsilon}_{p,q}\hookrightarrow F^s_{p,q}. \]
	 		\item Algebraic properties: For $s>0$, $F^s_{p,q}(\mathbb{R}^d)\cap L^\infty$ is an algebra. Moreover, $F^s_{p,q}(\mathbb{R}^d) $ is an algebra $\Longleftrightarrow$ $F^s_{p,q}(\mathbb{R}^d)\hookrightarrow L^\infty\Longleftrightarrow s>\frac{d}{p}$ or $s\geq d$ with $p=1$.
	 		\item Fatou property: If the sequence $\{f_k\}_{k\in\mathbb{N}_+}$ is uniformly bounded in $F^s_{p,q}$ and converges weakly in $\mathscr{S}'$ to $f$, then $f\in F^s_{p,q}$ and $\|f\|_{F^s_{p,q}}\leq \liminf\limits_{k\to\infty}\|f_k\|_{F^s_{p,q}}$.
	 		\item Complex interpolation: Let $1\leq p_0,q_0\leq \infty,1\leq p_1,q_1<\infty,0<\theta<1$ and
	 		\[ \frac{1}{q}=\frac{1-\theta}{q_0}+\frac{\theta}{q_1},\quad\frac{1}{p}=\frac{1-\theta}{p_0}+\frac{\theta}{p_1},\quad  s=(1-\theta)s_0+\theta s_1. \] 
	 		Then we have
	 		\[ \|f\|_{F^{s}_{p,q}}\leq\|f\|_{F^{s_0}_{p_0,q_0}}^{1-\theta}\|f\|_{F^{s_1}_{p_1,q_1}}^\theta,\quad\forall\ f\in F^{s_0}_{p_0,q_0}\cap F^{s_1}_{p_1,q_1}. \] 
	 	\end{enumerate}
	 \end{lemma}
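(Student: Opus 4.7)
The plan is to treat the four items separately, exploiting the $L^p(\ell^q)$ structure of the norm \eqref{qiciTLkongjiandingyi} in each case. For the embeddings (i), both inclusions are pointwise in $x$: $F^s_{p,q_0}\hookrightarrow F^s_{p,q_1}$ reduces to the monotonicity $\|\cdot\|_{\ell^{q_1}}\leq\|\cdot\|_{\ell^{q_0}}$ applied inside the outer $L^p$ norm, while $F^{s+\varepsilon}_{p,q}\hookrightarrow F^s_{p,q}$ follows from $2^{js}\leq 2^{j(s+\varepsilon)}$ for $j\geq 0$ together with the built-in $L^p$ low-frequency control of the inhomogeneous norm. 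Neither argument needs anything beyond the definition.

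For the algebra property (ii), I would expand $fg$ using Bony's paraproduct decomposition $fg=T_fg+T_gf+R(f,g)$ and estimate each piece by isolating the $L^\infty$ factor pointwise before passing to the $L^p(\ell^q)$ norm via the vector-valued maximal function inequality (Lemma \ref{Fguji}). The almost-orthogonality identities \eqref{a.o.c}--\eqref{a.o.c-1} collapse the relevant frequency sums, and the hypothesis $s>0$ is precisely what is needed to sum the remainder $R(f,g)$. The sharp characterization $F^s_{p,q}\hookrightarrow L^\infty \Longleftrightarrow s>d/p$ or $s\geq d$ with $p=1$ then follows from Bernstein's inequality (Lemma \ref{Bernstein}), which converts the summability condition $\sum_j 2^{j(d/p-s)}<\infty$ into the pointwise bound. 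I expect the main obstacle to lie exactly in the critical case $s=d$, $p=1$: the embedding $F^d_{1,q}\hookrightarrow L^\infty$ is borderline, so the Bony estimate for $R(f,g)$ must close without any logarithmic slack, and it is here that the operator bound of Lemma \ref{p=1Reszi}, combined with careful tracking of the low-frequency Fourier support of the remainder, becomes essential.

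The Fatou property (iii) reduces to the fact that each $\Delta_j$ is convolution with a Schwartz function, hence sequentially continuous on $\mathscr{S}'$: weak-$\mathscr{S}'$ convergence $f_k\rightharpoonup f$ therefore yields $\Delta_j f_k\to \Delta_j f$ pointwise almost everywhere (after a diagonal subsequence argument using local boundedness in $L^p$), and a double application of Fatou's lemma—first in $\ell^q$, then in $L^p$—produces the desired lower-semicontinuity bound. For complex interpolation (iv), I would invoke Calderón's method through the retraction--coretraction $f\mapsto(\Delta_j f)_{j\in\mathbb{Z}}$ that realizes $F^s_{p,q}$ as a complemented subspace of the weighted mixed-norm space $L^p(\ell^q(2^{js}))$; the Calderón interpolation identity for such vector-valued $L^p$ spaces is classical, and the retraction transfers the result back to the Triebel-Lizorkin scale with the stated parameters $(s,p,q)=(1-\theta)(s_0,p_0,q_0)+\theta(s_1,p_1,q_1)$ componentwise (up to harmonic averaging of the exponents).
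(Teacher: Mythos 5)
The paper does not prove Lemma \ref{lem:Triebel-Lizorkin-properties}; it is imported from \cite{MR1419319,MR781540}, so your proposal is measured against the standard literature arguments. For items (i), (iii) and (iv) your outline is the standard route and is sound: $\ell^{q_0}\hookrightarrow\ell^{q_1}$ pointwise in $x$ plus Minkowski on the low frequencies (using $s>0$ and the $L^p$ part of the inhomogeneous norm) gives (i); for (iii), testing $f_k$ against the Schwartz kernels $2^{jd}h(2^j(x-\cdot))$ gives $\Delta_jf_k(x)\to\Delta_jf(x)$ for \emph{every} $x$, so two applications of Fatou (in $\ell^q$, then in $L^p$) suffice and no diagonal subsequence is needed; and (iv) is Calder\'on's retraction--coretraction argument onto $L^p(\ell^q(2^{js}))$ combined with the three-lines bound $\|f\|_{[X_0,X_1]_\theta}\le\|f\|_{X_0}^{1-\theta}\|f\|_{X_1}^{\theta}$. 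The algebra property of $F^s_{p,q}\cap L^\infty$ via Bony decomposition and Lemma \ref{Fguji} is exactly the content of Proposition \ref{moser-type-estimate} in this paper.

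The genuine gap is in the critical embedding of item (ii). Bernstein's inequality (Lemma \ref{Bernstein}) gives $\|\Delta_jf\|_{L^\infty}\le C2^{jd/p}\|\Delta_jf\|_{L^p}$, so your ``summability condition'' amounts to controlling $\sum_{j\ge0}2^{j(d/p-s)}\cdot 2^{js}\|\Delta_jf\|_{L^p}$. For $s>d/p$ this converges geometrically against the $B^s_{p,\infty}$-bound implied by $F^s_{p,q}$, but at the endpoint $p=1$, $s=d$ the sum requires $f\in B^d_{1,1}$, whereas one only has $F^d_{1,q}\hookrightarrow B^d_{1,\max(1,q)}=B^d_{1,q}$; for $q>1$ the argument therefore does not close, and there is no paraproduct or ``low-frequency Fourier support'' refinement that rescues it. Lemma \ref{p=1Reszi} is an $L^1$ kernel bound for the low-frequency multiplier $S_0(-\Delta)^{-1}\partial_l\partial_k\partial_i$ used for the pressure; it is irrelevant to this embedding. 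The correct tool is the Jawerth--Franke embedding $F^{s_0}_{p_0,\infty}\hookrightarrow B^{s_1}_{p_1,p_0}$ for $s_0-\tfrac{d}{p_0}=s_1-\tfrac{d}{p_1}$, $p_0<p_1$, which with $p_0=1$, $s_0=d$, $p_1=\infty$ yields $F^{d}_{1,q}\hookrightarrow F^{d}_{1,\infty}\hookrightarrow B^{0}_{\infty,1}\hookrightarrow L^\infty$ for every $q$ --- this is precisely why the critical embedding holds for $p=1$ uniformly in $q$ while failing at $s=d/p$ for every $p>1$. Finally, the two ``only if'' directions of the stated equivalences (that an algebra must embed into $L^\infty$, and that the embedding fails below the stated threshold) are necessity statements requiring counterexamples, which your outline does not address.
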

	 \begin{lemma}\textup{\cite{MR1880646}}\label{lem:logarithmic-Triebel-Lizorkin-space-inequality}
	 	(Logarithmic interpolation inequality)
	 	Let $s> \frac{d}{p}$ with $(p,q)\in[1,\infty)\times[1,\infty]$ or $p=q=\infty$. Suppose $f\in F^s_{p,q}(\mathbb{R}^d)$, then there exists a positive constant C such that 
	 	\begin{equation*}
	 		\|f\|_{L^\infty}\leq C\big(1+\|f\|_{\dot{F}^0_{\infty,\infty}}(\ln^+\|f\|_{F^{s}_{p,q}}+1)\big),
	 	\end{equation*}
	 	where $\ln^+f\triangleq\max(0,\ln f)$.
	 \end{lemma}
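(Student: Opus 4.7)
The plan is to use a Littlewood–Paley balancing argument with an asymmetric three-piece frequency cutoff. Since $s>d/p$, Lemma~\ref{lem:Triebel-Lizorkin-properties}(ii) provides the embedding $F^s_{p,q}\hookrightarrow L^\infty$, so $f\in L^\infty$. Writing $f=\sum_{j\in\mathbb{Z}}\Delta_j f$ in the sense of tempered distributions and fixing non-negative integers $N,M$ to be specified later, I would start from
\[
\|f\|_{L^\infty}\le\Big(\sum_{j<-N}+\sum_{-N\le j\le M}+\sum_{j>M}\Big)\|\Delta_j f\|_{L^\infty}.
\]

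Each piece would be controlled by its own natural estimate. For $j<-N$, Bernstein's inequality (Lemma~\ref{Bernstein}) yields $\|\Delta_j f\|_{L^\infty}\le C\,2^{jd/p}\|f\|_{L^p}$, so this block is bounded by $C\,2^{-Nd/p}\|f\|_{L^p}$. For $-N\le j\le M$ the footnote definition of the endpoint norm directly gives $\|\Delta_j f\|_{L^\infty}\le\|f\|_{\dot{F}^0_{\infty,\infty}}$, contributing $(N+M+1)\|f\|_{\dot{F}^0_{\infty,\infty}}$. For $j>M$, Bernstein combined with the Besov embedding $F^s_{p,q}\hookrightarrow B^s_{p,\max(p,q)}\hookrightarrow B^s_{p,\infty}$ from Lemma~\ref{lem:Triebel-Lizorkin-properties}(i) yields $\|\Delta_j f\|_{L^\infty}\le C\,2^{-j(s-d/p)}\|f\|_{F^s_{p,q}}$, whose geometric tail sums to $C\,2^{-M(s-d/p)}\|f\|_{F^s_{p,q}}$ thanks to $s-d/p>0$.

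The concluding step is an optimal choice of cutoffs: setting
\[
N\simeq\tfrac{p}{d}\log^+\|f\|_{L^p},\qquad M\simeq\tfrac{1}{s-d/p}\log^+\|f\|_{F^s_{p,q}},
\]
makes the two outer pieces $O(1)$, and since $\|f\|_{L^p}\le\|f\|_{F^s_{p,q}}$ the middle piece is dominated by $C(\log^+\|f\|_{F^s_{p,q}}+1)\|f\|_{\dot{F}^0_{\infty,\infty}}$, which is exactly the claimed inequality. The endpoint $p=q=\infty$ is treated analogously, with the low block $\sum_{j<-N}\Delta_j f$ replaced by the single term $S_{-N}f$ estimated directly from the inhomogeneous characterization, after which the same balancing goes through.

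The main obstacle is the handling of the low-frequency tail. A single cutoff at $j=0$ would leave $\sum_{j<0}\|\Delta_j f\|_{L^\infty}\le C\|f\|_{L^p}$, producing a linear---rather than logarithmic---dependence on $\|f\|_{F^s_{p,q}}$ and hence a weaker inequality than the one sought. The two-parameter cutoff $-N,M$ is the right remedy: the $L^p$-dependence is confined to a geometrically small tail, whereas only the $O(\log^+\|f\|_{L^p})$ intermediate negative indices carry the $\dot{F}^0_{\infty,\infty}$-cost, which is in turn absorbed into the logarithmic factor $\log^+\|f\|_{F^s_{p,q}}$.
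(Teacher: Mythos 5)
The paper does not prove this lemma (it is quoted from \cite{MR1880646}), so I can only judge your argument on its own terms. For $1\le p<\infty$ your proof is the standard one and is correct: the three-block splitting at $-N$ and $M$, Bernstein's inequality giving the geometric factors $2^{-Nd/p}\|f\|_{L^p}$ and $2^{-M(s-d/p)}\|f\|_{\dot B^s_{p,\infty}}$ (with $\dot F^s_{p,q}\hookrightarrow\dot B^s_{p,\infty}$ since $\ell^q\hookrightarrow\ell^\infty$ inside the $L^p$ norm), the counting bound $(N+M+1)\|f\|_{\dot F^0_{\infty,\infty}}$ on the middle block, and the logarithmic choice of $N,M$ all go through; the convergence of $\sum_{j\in\mathbb Z}\Delta_jf$ to $f$ is legitimate here because $\|S_{-N}f\|_{L^\infty}\le C2^{-Nd/p}\|f\|_{L^p}\to0$.

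The genuine gap is the endpoint $p=q=\infty$, which your last paragraph dismisses too quickly. There $2^{jd/p}=1$, so the low-frequency tail has no geometric decay, and your proposed replacement --- estimating $S_{-N}f$ ``from the inhomogeneous characterization'' --- is either circular ($\|S_{-N}f\|_{L^\infty}\le C\|f\|_{L^\infty}$) or yields a bound linear in $\|f\|_{F^s_{\infty,\infty}}$, which destroys the logarithmic structure. This is not a cosmetic issue: taking $g$ with $\widehat g$ supported in an annulus and $g(0)=1$, and setting $f_K=\sum_{k=1}^{K}g(2^{-k}\cdot)$, one checks that $\|f_K\|_{\dot F^0_{\infty,\infty}}+\|f_K\|_{\dot F^s_{\infty,\infty}}\le C$ uniformly in $K$ while $f_K(0)=K$, so no balancing of cutoffs can recover the claimed inequality from the homogeneous seminorms alone when $p=\infty$; the low frequencies must be controlled by some additional quantity (e.g.\ $\|S_0f\|_{L^\infty}$ or an inhomogeneous $B^0_{\infty,\infty}$ norm). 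You should therefore either restrict the argument to $p<\infty$ or state explicitly what extra information is being used at the endpoint, rather than asserting that ``the same balancing goes through.''
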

	 \begin{lemma}\label{tiduheqiciF}
	 	\textup{\cite{MR781540}}For any $k\in\mathbb{N}$, there exists a constant $C_k$ such that the following inequality holds:
	 	\begin{equation}
	 		C_k^{-1}\|\nabla^kf\|_{\dot{F}^s_{p,q}}\leq\|f\|_{\dot{F}^{s+k}_{p,q}}\leq C_k\|\nabla^kf\|_{\dot{F}^s_{p,q}}.
	 	\end{equation}
	 \end{lemma}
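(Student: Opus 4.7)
The plan is to convert the derivative $\nabla^k$ into the multiplicative factor $2^{jk}$ on each dyadic block at the cost of a smooth Fourier multiplier supported in a fixed annulus, and then to transfer that multiplier through the nested $L^p(\ell^q)$ norm via a pointwise maximal bound combined with the vector-valued maximal function inequality (Lemma \ref{Fguji}).

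First I would fix an auxiliary cut-off $\widetilde{\varphi}\in C_c^\infty(\mathbb{R}^d\setminus\{0\})$ equal to $1$ on $\mathrm{supp}\,\varphi$ and supported in a slightly enlarged annulus $\widetilde{\mathscr{C}}$, so that $\Delta_j f=\widetilde{\varphi}(2^{-j}D)\Delta_j f$. Since $|\eta|\sim 1$ on $\widetilde{\mathscr{C}}$, a standard smooth partition-of-unity argument yields symbols $\psi_\alpha\in C_c^\infty(\widetilde{\mathscr{C}})$ with
\[
\widetilde{\varphi}(\eta)=\sum_{|\alpha|=k}\eta^\alpha\psi_\alpha(\eta).
\]
Evaluating at $\eta=2^{-j}\xi$ and translating back to physical space, and using $\partial^\alpha\Delta_j f=\Delta_j\partial^\alpha f$, produces the two identities
\[
\Delta_j f=(-i)^k\,2^{-jk}\sum_{|\alpha|=k}\psi_\alpha(2^{-j}D)\,\Delta_j\partial^\alpha f,\qquad \partial^\beta\Delta_j f=i^{|\beta|}\,2^{jk}\bigl(\eta^\beta\widetilde{\varphi}\bigr)(2^{-j}D)\,\Delta_j f,\quad |\beta|=k.
\]
Each Fourier multiplier appearing here is convolution with a Schwartz kernel of the form $2^{jd}K(2^{j}\cdot)$ whose $L^1$ norm is independent of $j$, and is therefore pointwise dominated by a constant multiple of the Hardy-Littlewood maximal function $\mathcal{M}$, uniformly in $j$.

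Multiplying the two identities by $2^{j(s+k)}$ and $2^{js}$ respectively, taking the $\ell^q$ norm in $j$ followed by the $L^p$ norm in $x$, and invoking the vector-valued maximal inequality of Lemma \ref{Fguji} then closes both directions of
\[
C_k^{-1}\|\nabla^k f\|_{\dot{F}^s_{p,q}}\le\|f\|_{\dot{F}^{s+k}_{p,q}}\le C_k\|\nabla^k f\|_{\dot{F}^s_{p,q}}
\]
in the interior range $1<p<\infty$, $1<q\le\infty$. The main technical obstacle will be the endpoint $p=1$, where Fefferman-Stein fails; there I would replace the Hardy-Littlewood bound by a Peetre-type maximal function $\varphi_j^* g(x)=\sup_y(1+2^j|y|)^{-a}|\Delta_j g(x-y)|$ with $a>d/\min(p,q)$, which still controls $|\psi_\alpha(2^{-j}D)\Delta_j\partial^\alpha f|(x)$ pointwise and enjoys the vector-valued bound over the full parameter range stated in the lemma. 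The remaining endpoint $p=q=\infty$ (respectively $p=q=1$) admits an even simpler treatment by Young's convolution inequality applied termwise in $j$, since the $\ell^q$ and $L^p$ norms decouple and the kernel of $\psi_\alpha(2^{-j}D)$ has $L^1$ norm uniformly bounded in $j$.
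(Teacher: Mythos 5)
Your argument is correct, but note that the paper does not prove this lemma at all: it is quoted directly from Triebel \cite{MR781540}, so there is no in-paper proof to compare against. Your proposal is essentially the standard textbook argument: the symbol factorization $\widetilde{\varphi}(\eta)=\sum_{|\alpha|=k}\eta^\alpha\psi_\alpha(\eta)$ on the annulus (e.g.\ via $\psi_\alpha=c_\alpha\eta^\alpha\widetilde{\varphi}/|\eta|^{2k}$) is valid, both identities check out, and the uniform $L^1$ kernel bounds plus Lemma \ref{Fguji} close the case $(p,q)\in(1,\infty)\times(1,\infty]$ and $p=q=\infty$. For the endpoint $p=1$ with general $q$, your Peetre-maximal-function route works, but it imports a tool the paper never introduces; the paper's own device for exactly this situation is Lemma \ref{guoguji} with $\theta=1$ and $r\in(0,1)$, which gives the pointwise bound $|\psi_\alpha(2^{-j}D)g|\lesssim[M(|g|^r)]^{1/r}$ and then reduces to Lemma \ref{Fguji} with exponents $p/r,q/r>1$ after pulling the power $r$ through the nested norms (this is the computation carried out verbatim in the proof of Proposition \ref{moser-type-estimate} and in Case 2 of Proposition \ref{zijixiedejiaohuanzi}). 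Using that instead of the Peetre maximal function would make your proof self-contained within the paper's stated toolbox; otherwise the two routes are equivalent, since the Peetre maximal inequality is itself proved by the same $M(|\cdot|^r)^{1/r}$ trick.
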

	 \begin{lemma}\label{lem:Snguji}
	 	Let $s\in\mathbb{R}$, $(p,q)\in[1,\infty)\times[1,\infty]$ or $p=q=\infty$. Then there exits some $C>0$, such that 
	 	\[ \|S_{n+1}f\|_{F^{s+l}_{p,q}}\leq C2^{nl}\|f\|_{F^s_{p,q}}, \qquad\forall\  n,l\in\mathbb{N}.\]
	 \end{lemma}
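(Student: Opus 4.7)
The plan is to decompose $\|S_{n+1}f\|_{F^{s+l}_{p,q}}=\|S_{n+1}f\|_{L^p}+\|S_{n+1}f\|_{\dot F^{s+l}_{p,q}}$ and estimate each part separately, extracting the loss factor $2^{nl}$ from a Bernstein-type Fourier localisation. The $L^p$ part is immediate: since $S_{n+1}f=\tilde h_{n+1}\ast f$ with $\|\tilde h_{n+1}\|_{L^1}=\|\tilde h\|_{L^1}$ independent of $n$, Young's inequality gives $\|S_{n+1}f\|_{L^p}\le C\|f\|_{L^p}\le C\|f\|_{F^s_{p,q}}$; since $n,l\in\mathbb N$ forces $2^{nl}\ge 1$, this is automatically dominated by $C\,2^{nl}\|f\|_{F^s_{p,q}}$.

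For the homogeneous part, the key observation is that $\widehat{S_{n+1}f}=\chi(2^{-(n+1)}\cdot)\widehat f$ is supported in the ball $\{|\xi|\le \tfrac{4}{3}2^{n+1}\}$, so by the almost-orthogonality relation \eqref{a.o.c-1} one has $\Delta_j S_{n+1}f\equiv 0$ whenever $j\ge n+2$. Consequently the sum defining $\|S_{n+1}f\|_{\dot F^{s+l}_{p,q}}$ collapses to the indices $j\le n+1$, and on this range $2^{jl}\le 2^{(n+1)l}$. Pulling this factor out of the $\ell^q$-norm and restoring the sum to all of $\mathbb{Z}$ yields
\[
\|S_{n+1}f\|_{\dot F^{s+l}_{p,q}}\le 2^{(n+1)l}\Bigl\|\bigl(\sum_{j\in\mathbb Z}2^{jsq}|\Delta_j S_{n+1}f|^q\bigr)^{1/q}\Bigr\|_{L^p}=2^{(n+1)l}\|S_{n+1}f\|_{\dot F^s_{p,q}}.
\]
It then remains to prove the $n$-uniform bound $\|S_{n+1}f\|_{\dot F^s_{p,q}}\le C\|f\|_{\dot F^s_{p,q}}$. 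Writing $\Delta_j S_{n+1}f=S_{n+1}\Delta_j f$ (since both are Fourier multipliers), the Schwartz decay of $\tilde h$ yields the pointwise domination $|\Delta_j S_{n+1}f(x)|\le CM(\Delta_j f)(x)$, uniformly in $n$ and $j$, where $M$ is the Hardy-Littlewood maximal function. The vector-valued maximal function inequality in the Triebel-Lizorkin setting (Lemma \ref{Fguji}) then closes the argument, and combining with the $L^p$ piece gives the lemma.

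The main obstacle I anticipate lies at the endpoints of the parameter range, where the classical Fefferman-Stein inequality in $\ell^q(L^p)$ degenerates. The case $p=q=\infty$ is straightforward: it bypasses maximal functions altogether via $\|S_{n+1}\Delta_j f\|_{L^\infty}\le \|\tilde h\|_{L^1}\|\Delta_j f\|_{L^\infty}$ followed by the supremum in $j$. The delicate case is $p=1$ (with any $q$), since the Hardy-Littlewood maximal operator is not bounded on $L^1$. I plan to route around this obstruction by invoking Lemma \ref{Fguji} in the form tailored to the Triebel-Lizorkin scale, where the $L^p$ norm sits outside the $\ell^q$ norm so that a Peetre-type refinement of the pointwise domination remains effective even at $p=1$. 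Once that step is in place, the reductions above close uniformly over the full admissible range $(p,q)\in[1,\infty)\times[1,\infty]$ or $p=q=\infty$.
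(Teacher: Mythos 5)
Your route is genuinely different from the paper's: the paper disposes of the whole range $(p,q)\in[1,\infty)\times[1,\infty]$ by citing \cite{MR4240785} and only writes out the case $p=q=\infty$ (using \eqref{a.o.c-1} and Remark \ref{remark1} exactly as you do), whereas you attempt a self-contained proof. Your core mechanism is correct and is the natural one: the spectral support of $S_{n+1}f$ kills $\Delta_jS_{n+1}f$ for $j\ge n+2$, the factor $2^{jl}\le 2^{(n+1)l}$ is pulled out over the surviving indices, and the remaining task is the $n$-uniform bound $\|S_{n+1}f\|_{\dot F^s_{p,q}}\le C\|f\|_{\dot F^s_{p,q}}$, which for $(p,q)\in(1,\infty)\times(1,\infty]$ follows from Lemma \ref{miaoguji} plus Lemma \ref{Fguji}, and for $p=q=\infty$ from Young's inequality; the $L^p$ piece is trivial. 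What your write-up buys is transparency; what it costs is that the endpoint cases $p=1$ (any $q$) and $q=1$ (any $p$) are only gestured at. The phrase ``invoking Lemma \ref{Fguji} in a form tailored to the Triebel--Lizorkin scale'' is not yet an argument, since Lemma \ref{Fguji} as stated genuinely excludes $p=1$ and $q=1$. The concrete device the paper uses everywhere else for exactly this obstruction is Lemma \ref{guoguji} with $\theta=1$ and $r\in(0,1)$: it gives $|S_{n+1}\Delta_jf(x)|\le C\bigl[M(|\Delta_jf|^r)(x)\bigr]^{1/r}$ uniformly in $n,j$, after which $\bigl\|\|[M(|2^{js}\Delta_jf|^r)]^{1/r}\|_{l^q}\bigr\|_{L^p}=\bigl\|\|M(|2^{js}\Delta_jf|^r)\|_{l^{q/r}}\bigr\|_{L^{p/r}}^{1/r}$ puts you at exponents $p/r,q/r>1$ where Lemma \ref{Fguji} applies. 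Inserting that one step closes your argument over the full admissible range. (A cosmetic remark applying equally to the paper's version: the bound you actually obtain is $2^{(n+1)l}=2^l\cdot 2^{nl}$, so the constant as written carries a harmless $2^l$; this is irrelevant for the applications, which use only $l=0,1$.)
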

	 \begin{proof}
	 	For $(p,q)\in[1,\infty)\times[1,\infty]$, the result is already established in \cite{MR4240785}. Thus, we only need to consider the case $p=q=\infty$. Since \eqref{a.o.c-1}, it follows from Remark \ref{remark1} that
	 	\[ \|S_{n+1}f\|_{\dot{F}^{s+l}_{\infty,\infty}}=\sup_{j\in\mathbb{Z}}2^{j(s+l)}\|\Delta_jS_{n+1}f\|_{L^\infty}
	 	\leq C2^{nl}\sup_{j\in\mathbb{Z}}2^{js}\|\Delta_jf\|_{L^\infty}
	 	= C2^{nl}\|f\|_{\dot{F}^s_{\infty,\infty}}. \]
	 \end{proof}
	 
	If $f$ is a locally Lebesgue integrable function on $\mathbb{R}^d$, then the Hardy-Littlewood maximal function of $f$ is defined as follows:
	\[ (Mf)(x)=\sup_{r>0}\frac{1}{|\mathscr{B}(x,r)|}\int_{\mathscr{B}(x,r)}|f(y)|dy, \]
	where $|\mathscr{B}(x,r)|$ is the volume of the ball $\mathscr{B}(x,r)$ with center $x$ and radius $r$.
	\begin{lemma}\label{Fguji}
		\textup{\cite{c6f44d5b-9c43-3ae5-9150-1b0704d03ff8,MR1232192}} Let $(p,q)\in(1,\infty)\times(1,\infty]$ or $p=q=\infty$. Suppose $\{f_j\}_{j\in\mathbb{Z}}$ is a sequence of functions in $L^p$ with the property that $||f_j(\cdot)||_{l_j^q}\in L^p(\mathbb{R}^d)$, then there exists some $C>0$, such that
		\[\Big\|\big\|Mf_j(\cdot)\big\|_{l^q_j}\Big\|_{L^p} \leq C \Big\|\big\|f_j(\cdot)\big\|_{l^q_j}\Big\|_{L^p}. \]
	\end{lemma}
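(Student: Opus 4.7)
My plan is to reduce the vector-valued maximal inequality to the scalar Hardy--Littlewood theorem through two complementary devices: pointwise domination at the degenerate endpoints, and a weighted scalar maximal inequality combined with duality in the $L^{p/q}$-norm for the main range. I proceed by cases.

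First, I would dispose of the easy endpoints. When $p=q=\infty$, the pointwise bound $|f_j(x)|\le \sup_k|f_k(x)|$ yields $Mf_j(x)\le M\bigl(\sup_k|f_k|\bigr)(x)$ and hence $\sup_j Mf_j\le M(\sup_k|f_k|)$; combined with $\|Mg\|_{L^\infty}\le \|g\|_{L^\infty}$ this finishes the case. The diagonal case $p=q\in(1,\infty)$ follows from Fubini and the scalar $L^p$-bound applied to each $f_j$. The case $q=\infty$, $p\in(1,\infty)$ follows by the same pointwise majorization together with the scalar $L^p$-boundedness of $M$.

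For the generic range $1<p,q<\infty$ the plan is to invoke the Fefferman--Stein weighted maximal inequality
\[ \int_{\mathbb{R}^d}(Mf)^s\,w\,dx\le C_s\int_{\mathbb{R}^d}|f|^s\,(Mw)\,dx\qquad (w\ge 0,\ 1<s<\infty), \]
which is proved by a Calder\'on--Zygmund decomposition at the level sets of $Mf$. Assume first $q\le p$, so $r:=p/q\ge 1$ and $r'\in(1,\infty]$. By duality, select $h\ge 0$ with $\|h\|_{L^{r'}}\le 1$ realizing
\[ \Big\|\sum_j(Mf_j)^q\Big\|_{L^r}=\sum_j\int(Mf_j)^q\,h\,dx. \]
Applying the weighted inequality with exponent $q$ and weight $h$ to each summand, interchanging the sum and the integral, and using the scalar $L^{r'}$-boundedness of $M$ (valid since $r'>1$), I obtain
\[ \sum_j\int(Mf_j)^q h\,dx\le C\int\Big(\sum_j|f_j|^q\Big)Mh\,dx\le C\Big\|\sum_j|f_j|^q\Big\|_{L^r}\|Mh\|_{L^{r'}}\le C\Big\|\sum_j|f_j|^q\Big\|_{L^r}. \]
Raising to the power $1/q$ gives the desired bound whenever $q\le p$.

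The complementary regime $1<p<q<\infty$ is the main obstacle, because $r=p/q<1$ and the above $L^r$-duality collapses. I would handle it by following the original Fefferman--Stein strategy: establish the vector-valued weak-type $(1,1)$ estimate
\[ \Big|\Big\{x\in\mathbb{R}^d\colon\bigl\|(Mf_j(x))\bigr\|_{\ell^q}>\lambda\Big\}\Big|\le\frac{C}{\lambda}\int_{\mathbb{R}^d}\bigl\|(f_j(x))\bigr\|_{\ell^q}\,dx \]
by applying a Calder\'on--Zygmund decomposition to the scalar function $F(x)=\|(f_j(x))\|_{\ell^q}$ at the level $\lambda$ and treating the bad part through the standard Whitney covering argument, then interpolate this bound with the strong-type $L^{p_0}(\ell^q)\to L^{p_0}(\ell^q)$ estimate (for any fixed $p_0>q$, which is already covered by the previous case) via the Marcinkiewicz interpolation theorem for sublinear operators with values in the Banach space $\ell^q$. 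The delicate point is the vector-valued Calder\'on--Zygmund decomposition itself, because $M$ is only sublinear and the $\ell^q$-norm does not commute trivially with averaging; once that step is in place, the standard interpolation machinery covers the remaining parameters and completes the proof.
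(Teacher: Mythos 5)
This lemma is not proved in the paper at all: it is quoted verbatim from the cited references (it is the Fefferman--Stein vector-valued maximal inequality), so there is no in-paper argument to compare against. Your outline is, in substance, exactly the classical proof from those references, and it is sound. The endpoint reductions ($p=q=\infty$; $q=\infty$, $1<p<\infty$; the diagonal $p=q$) via the pointwise majorization $\sup_j Mf_j\le M(\sup_k|f_k|)$ and Fubini are correct. The regime $1<q\le p<\infty$ via the weighted inequality $\int (Mf)^q w\le C\int |f|^q Mw$ and $L^{p/q}$--$L^{(p/q)'}$ duality is the standard argument and is complete as written (including the borderline $p=q$, where $(p/q)'=\infty$ and one uses $\|Mh\|_{L^\infty}\le\|h\|_{L^\infty}$). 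For $1<p<q<\infty$, the route through the vector-valued weak $(1,1)$ estimate plus Marcinkiewicz interpolation with a strong $L^{p_0}(l^q)$ bound for some $p_0>q$ is again the original Fefferman--Stein strategy; the one step you leave unexecuted, the Calder\'on--Zygmund decomposition of $F(x)=\|(f_j(x))\|_{l^q}$ and the treatment of the bad part, does go through (Minkowski's integral inequality lets you pull the $l^q$-norm inside the averages over the Whitney cubes, and no cancellation is needed for $M$), but as it stands it is a correctly identified plan rather than a finished proof of that substep. In short: nothing is wrong, your argument reconstructs the cited theorem rather than paralleling anything the authors actually wrote, and only the weak-type vector-valued Calder\'on--Zygmund step would need to be written out in full.
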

	\begin{lemma}\label{miaoguji}
		\textup{\cite{Stein+1971}} Let $\varphi$ be an integrable function on $\mathbb{R}^d $, and set $\varphi_\varepsilon(x)=\frac{1}{\varepsilon^d}\varphi(\frac{x}{\varepsilon})$ for $\varepsilon>0$. Suppose that the least decreasing radial majorant of $\varphi$ is integrable, i.e. let \[ \psi(x)=\sup_{|y|\geq|x|}|\varphi(y)| \]and we suppose $\int_{\mathbb{R}^d}\psi(x)=A<\infty$. Then for all $f\in L^p(\mathbb{R}^d),1\leq p\leq\infty, $
		\[ \sup_{\varepsilon>0}|(f\ast\varphi_\varepsilon)(x)|\leq AM(f)(x),  \]
		where $M(f)$ is the Hardy-Littlewood maximal function of $f$.
	\end{lemma}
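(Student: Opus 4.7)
The plan is to follow the classical argument from Stein. The first step is the reduction to a non-negative, radial, non-increasing kernel: pointwise $|\varphi(y)|\leq\psi(y)$, so $|\varphi_\varepsilon(y)|\leq\psi_\varepsilon(y)$ and hence $|(f\ast\varphi_\varepsilon)(x)|\leq(|f|\ast\psi_\varepsilon)(x)$. Since $M(|f|)=M(f)$, it suffices to prove, for every non-negative, radial, non-increasing $\psi\in L^1(\mathbb{R}^d)$ with $\|\psi\|_{L^1}=A$ and every $f\geq 0$, the pointwise bound
\[
\sup_{\varepsilon>0}(f\ast\psi_\varepsilon)(x)\leq A\,M(f)(x).
\]

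The second step will exploit the layer-cake representation together with the geometric fact that the super-level sets of a radial, non-increasing function are balls centred at the origin. Writing $\{\psi>t\}=B(0,r(t))$ for a suitable non-increasing $r\colon(0,\infty)\to[0,\infty)$, we have
\[
\psi(x)=\int_0^\infty \chi_{B(0,r(t))}(x)\,dt,\qquad \psi_\varepsilon(x)=\varepsilon^{-d}\int_0^\infty\chi_{B(0,\varepsilon r(t))}(x)\,dt.
\]
Applying Tonelli's theorem (legitimate since every integrand is non-negative) will convert the convolution into
\[
(f\ast\psi_\varepsilon)(x)=\int_0^\infty\varepsilon^{-d}\int_{B(x,\varepsilon r(t))}f(y)\,dy\,dt.
\]

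The third step will estimate the inner integrand by recognising it as a rescaled ball average: using $\varepsilon^{-d}|B(x,\varepsilon r(t))|=|B(0,r(t))|$,
\[
\varepsilon^{-d}\int_{B(x,\varepsilon r(t))}f(y)\,dy=|B(0,r(t))|\cdot\frac{1}{|B(x,\varepsilon r(t))|}\int_{B(x,\varepsilon r(t))}f(y)\,dy\leq|B(0,r(t))|\,M(f)(x).
\]
Integrating in $t$ and invoking the identity $\int_0^\infty|\{\psi>t\}|\,dt=\|\psi\|_{L^1}=A$ will yield the desired pointwise bound uniformly in $\varepsilon>0$.

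I do not foresee a genuine obstacle here; the subtlest point is simply arranging the reduction from $\varphi$ to its radial decreasing majorant so that the constant $A$ coincides exactly with $\int\psi$ rather than being enlarged, and ensuring measurability of $r(\cdot)$ (which follows from the monotonicity of the distribution function of $\psi$). An alternative route would approximate $\psi$ monotonically from below by finite non-negative combinations $\sum_k c_k\chi_{B(0,r_k)}$, apply the elementary ball-average estimate to each summand, and pass to the limit via monotone convergence; the layer-cake presentation above simply packages that approximation into a single integral step.
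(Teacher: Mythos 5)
Your proof is correct: the reduction to the radial decreasing majorant, the layer-cake representation of $\psi_\varepsilon$ via balls $B(0,\varepsilon r(t))$, and the identity $\int_0^\infty|\{\psi>t\}|\,dt=A$ together give the uniform pointwise bound, and you have correctly flagged the only delicate points (measurability of $r(\cdot)$ and keeping the constant exactly $A$). The paper does not prove this lemma but simply cites Stein, and your argument is precisely the classical one from that source (the layer-cake formula being the continuous packaging of Stein's approximation by finite sums of ball indicators), so there is nothing to add.
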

	\begin{lemma}\label{guoguji}
		\textup{\cite{MR4240785,MR781540}} Let $L>0,j,k\in\mathbb{Z},j>k-L$ and $r \in (0,\infty)$. $ \psi\in C^{\infty}(\mathbb{R}^d) $ satisfies
		\[ |\psi(z)|(1+|z|^{\frac{d}{r}})\leq g(z), \]
		where $g(z)$ is some nonnegative radial decreasing integrable function. Denote $\psi_k(x)=2^{kd}\psi(2^kx)$, then for any $\theta\in (0,1]$, there exists a constant $C$ independent of $j,k$, such that the following inequality
		\[ |(\psi_k\ast f)(x)|\leq C2^{(j-k)\theta\frac{d}{r}}M(|f|^{1-\theta})(x)[M(|f|^r)(x)]^{\frac{\theta}{r}}\]
		holds for all $f\in L_{\mathscr{B}(0,c2^j)}^p$ with $p\geq 1$ and some generic constant c, where $L_{\mathscr{B}(0,c2^j)}^p\triangleq\{f\in L^p\colon\textup{supp}\hat{f}\subset \mathscr{B}(0,c2^j)\} .$
	\end{lemma}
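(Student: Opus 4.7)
The plan is to reduce the bound on $|(\psi_k*f)(x)|$ to the Hardy–Littlewood maximal function by splitting $|f|=|f|^{1-\theta}\cdot|f|^\theta$ inside the convolution integral and treating the two factors separately: the first will be absorbed into a convolution with an $L^1$-kernel (handled by Lemma \ref{miaoguji}), while the second will be estimated pointwise by exploiting the spectral localization $\textup{supp}\,\hat f\subset\mathscr{B}(0,c2^j)$. The gain factor $2^{(j-k)\theta d/r}$ will emerge from comparing the spatial scale $2^{-k}$ of $\psi_k$ with the frequency scale $2^j$ of $f$.

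The first ingredient is a Peetre–Fefferman–Stein pointwise estimate, which I would establish (or invoke as classical): for any $f$ with $\textup{supp}\,\hat f\subset\mathscr{B}(0,c2^j)$, any $r>0$ and any $\theta\in(0,1]$,
\[
|f(y)|^{\theta}\leq C(1+2^j|x-y|)^{\theta d/r}\,[M(|f|^r)(x)]^{\theta/r}.
\]
This follows by writing $f=f*\eta_j$ with $\eta_j$ a Schwartz function whose Fourier transform equals $1$ on $\mathscr{B}(0,c2^j)$, decomposing $|\eta_j|$ in dyadic annuli and applying Hölder's inequality with exponent $1/r$. The second ingredient is the direct pointwise bound extracted from the hypothesis $|\psi(z)|(1+|z|^{d/r})\leq g(z)$, namely
\[
|\psi_k(x-y)|\leq\frac{2^{kd}g(2^k(x-y))}{1+|2^k(x-y)|^{d/r}}.
\]

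Inserting both estimates into $(\psi_k*f)(x)=\int\psi_k(x-y)\,|f(y)|^{1-\theta}\,|f(y)|^{\theta}\,dy$, changing variables $u=2^k(x-y)$, and pulling out the factor $[M(|f|^r)(x)]^{\theta/r}$, the integrand reduces to $2^{kd}g(2^k(x-y))\,|f(y)|^{1-\theta}$ multiplied by the scale–comparison quotient $\dfrac{(1+2^{j-k}|u|)^{\theta d/r}}{1+|u|^{d/r}}$. Elementary estimates of the form $(1+a)^{\theta d/r}\leq C(1+a^{\theta d/r})$, combined with the hypothesis $j>k-L$ (which prevents degeneration when $j<k$), give
\[
\frac{(1+2^{j-k}|u|)^{\theta d/r}}{1+|u|^{d/r}}\leq C(L,\theta,d,r)\,2^{(j-k)\theta d/r},\qquad\text{uniformly in }u\in\mathbb{R}^d.
\]
The remaining convolution $\int 2^{kd}g(2^k(x-y))\,|f(y)|^{1-\theta}\,dy$ is precisely of the type treated in Lemma \ref{miaoguji}, since $g$ is a nonnegative radial decreasing integrable majorant, so it is controlled by $C\,M(|f|^{1-\theta})(x)$. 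Putting everything together yields the claimed inequality.

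The main technical obstacle is the sharp comparison inside the quotient above: the hypothesis $j>k-L$ is exactly what keeps the constant independent of $j,k$ in the difficult regime $j<k$, where the naive bound of the numerator is larger than $2^{(j-k)\theta d/r}$. Some additional care is needed when $\theta d/r$ is not an integer, where the inequality $(1+a)^{\theta d/r}\leq C(1+a^{\theta d/r})$ must be used in place of direct expansion. Once these two ingredients are in place, the remainder of the argument is routine scaling plus an appeal to Stein's maximal function control.
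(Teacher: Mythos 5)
The paper does not prove this lemma; it is quoted from \cite{MR4240785,MR781540}, so there is no internal proof to compare against. Your argument is the standard one and its architecture is correct: split $|f|=|f|^{1-\theta}|f|^{\theta}$, control $|f(y)|^{\theta}$ by the Peetre--Fefferman--Stein maximal inequality using the spectral support in $\mathscr{B}(0,c2^j)$, absorb the weight $(1+2^{j-k}|u|)^{\theta d/r}/(1+|u|^{d/r})$ into the constant times $2^{(j-k)\theta d/r}$ (your case analysis is right: for $j\ge k$ one factors out $2^{(j-k)\theta d/r}$ and uses $\theta d/r\le d/r$; for $j<k$ the hypothesis $j>k-L$ bounds $2^{(k-j)\theta d/r}$ by $2^{L\theta d/r}$), and finish the remaining convolution of $|f|^{1-\theta}$ against $2^{kd}g(2^k\cdot)$ with Lemma \ref{miaoguji}. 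This yields exactly the stated bound with $C=C(L,\theta,d,r,\|g\|_{L^1})$, independent of $j,k$.

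One caveat: your parenthetical justification of the Peetre inequality
\[
|f(y)|\le C\,(1+2^j|x-y|)^{d/r}\,[M(|f|^r)(x)]^{1/r}
\]
``by decomposing $|\eta_j|$ in dyadic annuli and applying H\"older's inequality with exponent $1/r$'' does not work in the regime $r<1$, which is precisely the regime in which the paper applies the lemma. H\"older only yields control by $M(|f|^{r})^{1/r}$ when $r\ge 1$ (for $r<1$ it produces the wrong-direction quantity $M(|f|^{1/r})^{r}$); the sub-unit case requires the band-limited sub-mean-value/bootstrap argument of Peetre (or Plancherel--P\'olya), exploiting $\mathrm{supp}\,\hat f\subset\mathscr{B}(0,c2^j)$ to compare $|f(y)|$ with local $L^r$ averages of $f$ itself. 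Since you explicitly offer to invoke the inequality as classical, this does not invalidate the proof, but the sketched derivation should either be replaced by a citation or by the correct bootstrap argument.
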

	Now, we recall the following Morse type inequalities.
	\begin{lemma}\label{lem:product}
		\textup{\cite{MR1419319}} Assume $s_1\leq s_2$ and $s_1+s_2>d \max(0,\frac{2}{p}-1).$ Let $s_2>\frac{d}{p}$ and $q\geq\max(q_1,q_2).$ In the case $s_2>s_1$, we have 
		\[  \|fg\|_{F^{s_1}_{p,q_1}(\mathbb{R}^d)}\lesssim \|f\|_{F^{s_1}_{p,q_1}(\mathbb{R}^d)}\|g\|_{F^{s_2}_{p,q_2}(\mathbb{R}^d)}.\]
		If $s_1=s_2$, then 
		\[ \|fg\|_{F^{s_1}_{p,q}(\mathbb{R}^d)}\lesssim \|f\|_{F^{s_1}_{p,q_1}(\mathbb{R}^d)}\|g\|_{F^{s_1}_{p,q_2}(\mathbb{R}^d)}.\]
	\end{lemma}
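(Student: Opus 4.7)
The plan is to use Bony's paraproduct decomposition
\[ fg = T_f g + T_g f + R(f,g), \]
with $T_u v = \sum_j S_{j-1} u \, \Delta_j v$ the paraproduct and $R(u,v) = \sum_{|j-k|\leq 1} \Delta_j u \, \Delta_k v$ the remainder, and to estimate each of the three pieces separately in $F^{s_1}_{p,q_1}$ (resp.\ $F^{s_1}_{p,q}$ in the equality case). The principal ingredients will be the spectral support properties \eqref{a.o.c}--\eqref{a.o.c-1}, the Fefferman-Stein vector-valued maximal inequality (Lemma \ref{Fguji}), and the embedding $F^{s_2}_{p,q_2} \hookrightarrow L^\infty$ provided by Lemma \ref{lem:Triebel-Lizorkin-properties}(ii), which is available thanks to the hypothesis $s_2 > d/p$.

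For $T_g f$, the support property \eqref{a.o.c-1} restricts $\Delta_k T_g f$ to the diagonal band $|k-j|\leq 5$; since $|S_{j-1} g(x)| \leq C \|g\|_{L^\infty}$ by Lemma \ref{miaoguji}, a direct pointwise bound followed by Lemma \ref{Fguji} produces
\[ \|T_g f\|_{F^{s_1}_{p,q_1}} \lesssim \|g\|_{L^\infty} \|f\|_{F^{s_1}_{p,q_1}} \lesssim \|g\|_{F^{s_2}_{p,q_2}} \|f\|_{F^{s_1}_{p,q_1}}, \]
which yields the right-hand side directly. For the paraproduct $T_f g$, the factor $f$ need not lie in $L^\infty$, so I would exploit the regularity gap by writing $2^{k s_1} = 2^{-k(s_2-s_1)} 2^{k s_2}$; applying Lemma \ref{guoguji} with a suitable $r \in (0, \min(p,q_1)]$ dominates $|\Delta_k(S_{j-1}f \cdot \Delta_j g)|$ by pointwise products of Hardy-Littlewood maximal functions, and then discrete Young's inequality (whose validity is guaranteed by $s_2 > s_1$) together with a second application of Lemma \ref{Fguji} returns $\|T_f g\|_{F^{s_1}_{p,q_1}} \lesssim \|f\|_{F^{s_1}_{p,q_1}} \|g\|_{F^{s_2}_{p,q_2}}$.

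The hardest piece is the remainder $R(f,g)$: by \eqref{a.o.c} the term $\Delta_k(\Delta_j f \, \widetilde{\Delta}_j g)$ vanishes unless $j \geq k - 4$, so the $j$-sum is ``outgoing'' and convergence requires a net positive gain in the exponent, which is precisely the origin of the threshold $s_1 + s_2 > d\max(0, 2/p - 1)$. For $p \geq 2$ a direct combination of H\"{o}lder's inequality in $x$ and Lemma \ref{Fguji} suffices, but for $p < 2$ one has to apply Bernstein's inequality (Lemma \ref{Bernstein}) to trade spatial integrability for frequency, with the critical accounting producing exactly the factor $2^{jd(2/p - 1)}$ that consumes the regularity $s_1 + s_2 - d(2/p - 1) > 0$. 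This is the step I expect to be the main obstacle, as it requires a careful bookkeeping between the $L^p$ and $\ell^q$ directions, which do not commute in the Triebel-Lizorkin setting. Finally, in the case $s_1 = s_2$ the gain factor $2^{-k(s_2-s_1)}$ in the $T_f g$ estimate disappears, and the loss is recovered in the $\ell^q$-direction by H\"{o}lder's inequality for sequences, which is exactly why the additional hypothesis $q \geq \max(q_1, q_2)$ is imposed. Combining the three bounds then yields both claimed inequalities.
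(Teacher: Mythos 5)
This lemma is not proved in the paper at all: it is quoted verbatim from the reference \cite{MR1419319} (Runst--Sickel), so there is no in-paper argument to compare against. Your sketch follows the standard paraproduct route by which such product estimates are established in that reference, and the skeleton is right: $T_g f$ is controlled via $F^{s_2}_{p,q_2}\hookrightarrow L^\infty$, $T_f g$ via the regularity of $g$, and the remainder is where the hypothesis $s_1+s_2>d\max(0,\tfrac2p-1)$ must be spent.

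As a proof, however, it has a genuine gap precisely at the step you yourself flag as ``the main obstacle.'' For $R(f,g)$ with $p<2$ you cannot argue block-by-block with Bernstein (H\"older in $x$ puts $\Delta_jf\,\widetilde{\Delta}_jg$ in $L^{p/2}$, but the $\ell^{q}$-in-$j$ norm sits inside the $L^p$-in-$x$ norm, so the blocks cannot be separated); the correct device is the pointwise bound of Lemma \ref{guoguji} applied with $r$ close to $p/2$, whose loss $2^{(j-k)\theta d/r}$ is exactly the frequency-side avatar of the factor $2^{jd(2/p-1)}$ you mention, followed by the H\"older-in-$(j,x)$ splitting and Lemma \ref{Fguji} as in the proof of \eqref{moserzhongRbufen}. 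Until that computation is carried out, the case $p<2$ --- which is part of the stated range --- is unproved. Two smaller misattributions: in $T_fg$ the sum over $j$ is confined to the band $|j-k|\le 4$, so no discrete Young's inequality in $j$ is needed and $s_2>s_1$ plays no role there; what is needed is the uniform bound $\|S_{j-1}f\|_{L^\infty}\lesssim 2^{j(d/p-s_1)_+}\|f\|_{F^{s_1}_{p,q_1}}$, absorbed by $s_2>d/p$. Likewise the condition $q\ge\max(q_1,q_2)$ in the case $s_1=s_2$ comes from the embeddings $\ell^{q_1},\ell^{q_2}\hookrightarrow\ell^{q}$ (the three pieces of the decomposition naturally land in different $\ell^{q_i}$ scales), not from H\"older's inequality for sequences.
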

	\begin{proposition}\label{moser-type-estimate}
		Let $s>0$, $(p,q)\in[1,\infty)\times[1,\infty]$, or $p=q=\infty$. Then there exists a constant $C$ such that the following inequalities hold:
		\begin{equation}\label{homogeneous-Triebel-Lizorkin-space-moser}
			\|fg\|_{\dot{F}^s_{p,q}}\leq C\left(\|f\|_{L^{\infty}}\|g\|_{\dot{F}^s_{p,q}}+\|g\|_{L^{\infty}}\|f\|_{\dot{F}^s_{p,q}}\right)
		\end{equation}
		for the homogeneous Triebel-Lizorkin space, and
		\begin{equation}\label{inhomogeneous-Triebel-Lizorkin-space-moser}
			\|fg\|_{F^s_{p,q}}\leq C\left(\|f\|_{L^{\infty}}\|g\|_{F^s_{p,q}}+\|g\|_{L^{\infty}}\|f\|_{F^s_{p,q}}\right)
		\end{equation}
		for the inhomogeneous Triebel-Lizorkin space.
	\end{proposition}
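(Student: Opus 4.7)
The plan is to prove the homogeneous estimate \eqref{homogeneous-Triebel-Lizorkin-space-moser} via Bony's paraproduct decomposition
\[ fg = T_f g + T_g f + R(f,g), \qquad T_f g \triangleq \sum_{j\in\mathbb{Z}} S_{j-1}f\,\Delta_j g,\quad R(f,g)\triangleq\sum_{j\in\mathbb{Z}} \Delta_j f\,\tilde\Delta_j g, \]
where $\tilde\Delta_j\triangleq\Delta_{j-1}+\Delta_j+\Delta_{j+1}$. Each of the three pieces is estimated separately in $\dot F^s_{p,q}$, and the inhomogeneous bound \eqref{inhomogeneous-Triebel-Lizorkin-space-moser} follows at once by adding $\|fg\|_{L^p}\leq\|f\|_{L^\infty}\|g\|_{L^p}$.

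The common pointwise input will be Lemma \ref{guoguji}: each summand $S_{j-1}f\,\Delta_j g$, $S_{j-1}g\,\Delta_j f$, or $\Delta_j f\,\tilde\Delta_j g$ has Fourier support in a ball of radius $\sim 2^j$, so the lemma applies. Combining it with Jensen's inequality $M(|h|^{1-\theta})\leq M(|h|^r)^{(1-\theta)/r}$ (valid whenever $r\geq 1-\theta$) and with the trivial $L^\infty$ bounds $\|S_{j-1}f\|_{L^\infty},\|\tilde\Delta_j g\|_{L^\infty}\lesssim\|f\|_{L^\infty},\|g\|_{L^\infty}$, I expect estimates of the shape
\[ |\Delta_k(\cdots)(x)|\;\lesssim\; 2^{(j-k)\theta d/r}\,\|f\|_{L^\infty}\,\bigl[M(|\Delta_j g|^r)(x)\bigr]^{1/r}\]
(and symmetrically for the $g$-factor). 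For the paraproducts $T_f g$ and $T_g f$, the quasi-orthogonality \eqref{a.o.c-1} localizes the $j$-sum to $|j-k|\leq 4$, so the growth factor is a bounded constant. For the remainder only $j\geq k-4$ is enforced, and here the hypothesis $s>0$ is used: choosing $\theta$ so that $\theta d/r<s$ makes the resulting discrete-convolution kernel $\ell^1$-summable, and Young's inequality transfers the $\ell^q$ norm from $k$ to $j$. In each case the last step is to take the $L^p$ norm and apply the vector-valued maximal inequality (Lemma \ref{Fguji}) on $L^{p/r}(\ell^{q/r})$, which returns $\|g\|_{\dot F^s_{p,q}}$ or $\|f\|_{\dot F^s_{p,q}}$.

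The main obstacle is the endpoint $p=1$ (and likewise $q=1$), where Fefferman-Stein is false on $L^1(\ell^q)$; this is precisely why Lemma \ref{guoguji} must be invoked in place of the simpler bound $|\Delta_k\phi|\lesssim M(\phi)$ from Lemma \ref{miaoguji}. The idea is to select $r\in(d/(d+s),1)$ together with $\theta\in[1-r,\min(1,sr/d))$, a window which is nonempty for every $s>0$; this arrangement simultaneously satisfies the Jensen requirement $r\geq 1-\theta$, the summability requirement $\theta d/r<s$, and the Fefferman-Stein requirement $p/r,q/r>1$. The remaining endpoint $p=q=\infty$ coincides with $\dot B^s_{\infty,\infty}$ and admits a direct argument in which no vector-valued maximal machinery is needed.
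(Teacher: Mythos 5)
Your proposal is correct and follows essentially the same route as the paper: Bony's decomposition, the pointwise bound of Lemma \ref{guoguji}, Young's inequality in the dyadic index, and the vector-valued maximal inequality (Lemma \ref{Fguji}) at the lowered exponents $L^{p/r}(\ell^{q/r})$ with $r<1$, which is exactly how the endpoints $p=1$ and $q=1$ are reached. The only differences are minor: the paper reduces to the single missing case $q=1$, $p\in(1,\infty)$ by citing the literature for the other ranges, and for the remainder term it keeps the two factors $M(|h|^{1-\theta})$ and $[M(|h|^r)]^{\theta/r}$ separate, applying H\"older and then Fefferman--Stein twice, whereas your Jensen merge $M(|h|^{1-\theta})\leq [M(|h|^r)]^{(1-\theta)/r}$ (legitimate since you arrange $\theta\geq 1-r$) collapses them into a single factor $[M(|h|^r)]^{1/r}$ and needs the maximal inequality only once; your parameter window $r\in(d/(d+s),1)$, $\theta\in[1-r,\min(1,sr/d))$ is indeed nonempty for every $s>0$ and simultaneously satisfies the Jensen, summability, and Fefferman--Stein constraints, so the argument closes.
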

	\begin{proof}
		In the case of $(p,q)\in(1,\infty)\times(1,\infty]$ or $p=q=\infty$, Proposition \ref{moser-type-estimate} was proved in \cite{MR1880646}. In the case of $p=1,q\in[1,\infty]$, Proposition \ref{moser-type-estimate} has been  established in \cite{MR2020259,MR4240785}. Hence, it suffices to show the case of $q=1,p\in(1,\infty)$. Indeed, we use Bony's paraproduct decomposition \cite{MR631751} as follows:
		\[ fg=T_fg+T_gf+R(f,g), \]
		where
		\[ T_fg=\sum_{k\leq j-2}\Delta_kf\Delta_jg=\sum_{j\in\mathbb{Z}}S_{j-1}f\Delta_jg, \]
		\[ R(f,g)=\sum_{j\in\mathbb{Z}}\Delta_jf\tilde{\Delta}_jg,\quad\tilde{\Delta}_j\coloneqq\Delta_{j-1}+\Delta_{j}+\Delta_{j+1}. \]
		For every $0<r<\infty$, by using Lemma \ref{guoguji} with $\theta=1$, one has
		\begin{align}
			\left|2^{ms}\Delta_mT_fg\right|&=\left|2^{ms}\sum_{j\in\mathbb{Z}}\Delta_m(S_{j-1}f\Delta_jg)\right|=\left|\sum_{|m-j|\leq 4}2^{ms}\Delta_m(S_{j-1}f\Delta_jg)\right|\nonumber\\
			&\leq C \sum_{|m-j|\leq 4}2^{ms}\left[M(|(S_{j-1}f\Delta_jg)|^r)\right]^{\frac{1}{r}}.\label{equation1}
		\end{align}
		As such, choosing $0<r<1$, and applying  \eqref{equation1}, Young's inequality and Lemma \ref{Fguji} yields
		\begin{align*}
			\|T_fg\|_{\dot{F}^s_{p,1}}&=\left\|\left\|2^{ms}\left|\Delta_mT_fg\right|\right\|_{l^1}\right\|_{L^p}\\
			&\leq C\Bigg\| \bigg\|\sum_{|m-j|\leq 4}2^{ms}\left[M(|(S_{j-1}f\Delta_jg)|^r)\right]^{\frac{1}{r}}\bigg\|_{l^1}\Bigg\|_{L^p}\\
			&\leq C\left\|f\right\|_{L^\infty}\Bigg\| \bigg\| \sum_{|m-j|\leq 4}2^{(m-j)s}2^{js}\left[M\left(|(\Delta_jg)|^r\right)\right]^{\frac{1}{r}}\bigg\|_{l^1}\Bigg\|_{L^p}\\
			&\leq C\left\|f\right\|_{L^\infty}\Bigg\| \bigg\| 2^{js}\left[M\left(|(\Delta_jg)|^r\right)\right]^{\frac{1}{r}}\bigg\|_{l^1}\Bigg\|_{L^p}\\
			&=C\left\|f\right\|_{L^\infty}\left\| \left\| M\left(2^{jsr}|(\Delta_jg)|^r\right)\right\|_{l^\frac{1}{r}}\right\|_{L^{\frac{p}{r}}}^{\frac{1}{r}}\\
			&\leq C\left\|f\right\|_{L^\infty}\left\| \left\| 2^{jsr}|(\Delta_jg)|^r\right\|_{l^\frac{1}{r}}\right\|_{L^{\frac{p}{r}}}^{\frac{1}{r}}\\
			&=C\|f\|_{L^\infty}\|g\|_{\dot{F}^s_{p,1}}.
		\end{align*}	    	
		Similarly, we have
		\[ \|T_gf\|_{\dot{F}^s_{p,1}}\leq C\|g\|_{L^\infty}\|f\|_{\dot{F}^s_{p,1}}. \]
		Next, we estimate $R(f,g)=\sum_{j\in\mathbb{Z} }\Delta_jf\tilde{\Delta}_jg$. For arbitrary fixed $r\in(0,1)$, as $s>0$, we can specify $\theta\in(0,1)$ such that $s>\frac{d\theta}{r}$. Using the property of Fourier frequency support \eqref{a.o.c}, one can assert that $\Delta_m\left(\Delta_jf\tilde{\Delta}_jg\equiv0\right)$ if $j<m-3$. Thanks to Lemma \ref{guoguji}, one gets    	
		\begin{align}
			\left|2^{ms}\Delta_mR(f,g)\right|&=\left|2^{ms}\Delta_m\sum_{j\in\mathbb{Z} }\Delta_jf\tilde{\Delta}_jg\right|=\left|2^{ms}\sum_{j\geq m-3}\Delta_m\left(\Delta_jf\tilde{\Delta}_jg\right)\right|\nonumber\\
			&\leq C\Bigg| 2^{ms}\sum_{j\geq m-3}2^{(j-m)\theta\frac{d}{r}}M(|\Delta_jf\tilde{\Delta}_jg|^{1-\theta})(x) \times [M(|\Delta_jf\tilde{\Delta}_jg|^r)(x)]^\frac{\theta}{r}\Bigg|\nonumber\\
			&= C\Bigg| \sum_{j\geq m-3}2^{(m-j)(s-\theta\frac{d}{r})}M(|2^{js}\Delta_jf\tilde{\Delta}_jg|^{1-\theta})(x)\times [M(|2^{js}\Delta_jf\tilde{\Delta}_jg|^r)(x)]^\frac{\theta}{r}\Bigg|.\label{equation2}
		\end{align}	    	
		Following Young's inequality and H\"{o}lder's inequality, Lemma \ref{Fguji}, we can see	 from \eqref{equation2} that	
		\begin{align}
			\|R(f,g)\|_{\dot{F}^s_{p,1}}&=\Big\|\big\|2^{ms}\left|\Delta_mR(f,g)\right|\big\|_{l^1}\Big\|_{L^p}\nonumber\\
			&\leq C\Bigg\|\bigg\|M(|2^{js}\Delta_jf\tilde{\Delta}_jg|^{1-\theta})(x) [M(|2^{js}\Delta_jf\tilde{\Delta}_jg|^r)(x)]^\frac{\theta}{r}\bigg\|_{l^1}\Bigg\|_{L^p}\nonumber\\
			&\leq C\Bigg\|\bigg\|M(|2^{js}\Delta_jf\tilde{\Delta}_jg|^{1-\theta})(x)\bigg\|_{l^\frac{1}{1-\theta}}\Bigg\|_{L^\frac{p}{1-\theta}}\times\Bigg\|\bigg\| [M(|2^{js}\Delta_jf\tilde{\Delta}_jg|^r)(x)]^\frac{\theta}{r}\bigg\|_{l^\frac{1}{\theta}}\Bigg\|_{L^\frac{p}{\theta}}\nonumber\\
			&\leq C\Bigg\|\bigg\||2^{js}\Delta_jf\tilde{\Delta}_jg|^{1-\theta}\bigg\|_{l^\frac{1}{1-\theta}}\Bigg\|_{L^\frac{p}{1-\theta}}\times\Bigg\|\bigg\| [M(|2^{js}\Delta_jf\tilde{\Delta}_jg|^r)(x)]^\frac{\theta}{r}\bigg\|_{l^\frac{1}{\theta}}\Bigg\|_{L^\frac{p}{\theta}}\nonumber\\
			&\leq C\|f\|_{L^\infty}^{1-\theta}\|g\|_{\dot{F}^s_{p,1}}^{1-\theta}\Bigg\|\bigg\| M(|2^{js}\Delta_jf\tilde{\Delta}_jg|^r)(x)\bigg\|_{l^\frac{1}{r}}\Bigg\|_{L^\frac{p}{r}}^{\frac{\theta}{r}}\nonumber\\
			&\leq C\|f\|_{L^\infty}^{1-\theta}\|g\|_{\dot{F}^s_{p,1}}^{1-\theta}\left\|\left\| |2^{js}\Delta_jf\tilde{\Delta}_jg|^r\right\|_{l^\frac{1}{r}}\right\|_{L^\frac{p}{r}}^{\frac{\theta}{r}}\nonumber\\
			&\leq C\|f\|_{L^\infty}^{1-\theta}\|g\|_{\dot{F}^s_{p,1}}^{1-\theta} \|f\|_{L^\infty}^{\theta}\|g\|_{\dot{F}^s_{p,1}}^{\theta}\nonumber\\
			&= C\|f\|_{L^\infty}\|g\|_{\dot{F}^s_{p,1}}\label{moserzhongRbufen}.
		\end{align}   
		This yields \eqref{homogeneous-Triebel-Lizorkin-space-moser}. The  inequality \eqref{inhomogeneous-Triebel-Lizorkin-space-moser} for the inhomogeneous space is obtained by \eqref{homogeneous-Triebel-Lizorkin-space-moser} and the following fact:
		\begin{equation*}
			\|fg\|_{L^p}\leq \frac{1}{2}(\|f\|_{L^\infty}\|g\|_{L^p}+\|g\|_{L^\infty}\|f\|_{L^p}).
		\end{equation*}
		Therefore, we complete the proof of Proposition \ref{moser-type-estimate}.		
	\end{proof}
	\section{Commutator estimates} 
	This section is devoted to the derivation of the commutator estimates Proposition \ref{zijixiedejiaohuanzi}.
	\begin{proof}[Proof of Proposition \ref{zijixiedejiaohuanzi}]
		 By the Einstein convention on the summation over repeated indices $i\in\{1,\cdots,d\}$, and the Bony paraproduct decomposition, one can see
	\begin{align*}
		[f,\Delta_j]\cdot \nabla g&=f^i\Delta_j\partial_ig-\Delta_j(f^i\partial_ig)\\
		&=[T_{f^i},\Delta_j]\partial_ig-\Delta_jR(f^i,\partial_ig)+T_{\Delta_j\partial_ig}f^i+R(f^i,\Delta_j\partial_ig)-\Delta_jT_{\partial_ig}f^i\\
		&\triangleq I_1+I_2+I_3+I_4+I_5.
	\end{align*} 	    	
	\textbf{Case 1:} $\bm{(p,q) \in (1,\infty)\times(1,\infty]}$ \textbf{or} $\bm{p=q=\infty.}$
	
	Using \eqref{a.o.c-1}, integration by part, first-order Taylor's formula and Lemma \ref{miaoguji}, we have
	\begin{align*}
	|I_1|&=\left|\sum_{m\in\mathbb{Z}}S_{m-1}f^i\Delta_m\Delta_j\partial_ig-\Delta_j(\sum_{m\in\mathbb{Z}}S_{m-1}f^i\Delta_m\partial_ig)\right|\\
		&=\Bigg|\sum_{|m-j|\leq 4}S_{m-1}f^i(x)\Delta_m2^{jd}\int_{\mathbb{R}^d}h\left(2^j(x-y)\right)\partial_ig(y)dy\\
		&\mathrel{\phantom{=}}-2^{jd}\int_{\mathbb{R}^d}h\left(2^j(x-y)\right)\left(S_{m-1}f^i(y)\Delta_m\partial_ig(y)\right)dy\Bigg|\\
		&=\left|\sum_{|m-j|\leq 4}\int_{\mathbb{R}^d}\left(S_{m-1}f^i(x)-S_{m-1}f^i(y)\right)2^{jd}h\left(2^j(x-y)\right)\partial_i\Delta_mg(y)dy\right|\\
		&=\Bigg|\sum_{|m-j|\leq 4}\int_{\mathbb{R}^d}-\left(\partial_iS_{m-1}f^i(x)-\partial_iS_{m-1}f^i(y)\right)2^{jd}h\left(2^j(x-y)\right)\Delta_mg(y)\\
		&\mathrel{\phantom{=}}-\left(S_{m-1}f^i(x)-S_{m-1}f^i(y)\right)2^{jd+j}(\partial_ih)\left(2^j(x-y)\right)\Delta_mg(y)dy\Bigg|\\
		&\lesssim\left\|\nabla f\right\|_{L^\infty}\sum_{|m-j|\leq 4}\left|\int_{\mathbb{R}^d}2^{jd}h\left(2^j(x-y)\right)\Delta_mg(y)dy\right|\\
		&\mathrel{\phantom{=}}+\left\|\nabla S_{m-1}f\right\|_{L^\infty}\sum_{|m-j|\leq 4}\left|\int_{\mathbb{R}^d}2^{jd+j}|x-y|(\partial_ih)\left(2^j(x-y)\right)\Delta_mg(y)dy\right|\\
		&\lesssim\sum_{|m-j|\leq 4}\|\nabla f\|_{L^\infty}M(|\Delta_mg|)(x)+\sum_{|m-j|\leq 4}\|\nabla S_{m-1} f\|_{L^\infty}M(|\Delta_mg|)(x)\\
		&\lesssim\sum_{|m-j|\leq 4}\|\nabla f\|_{L^\infty}M(|\Delta_mg|)(x).	   	
	\end{align*}
	Thanks to Young's inequality and Lemma \ref{Fguji}, one infers
	\begin{align}
		\Big\|\big\|2^{js}|I_1|\big\|_{l^q}\Big\|_{L^p}
		&\lesssim\|\nabla f\|_{L^\infty}\Bigg\|\bigg\|2^{js}\sum_{|m-j|\leq 4}M(|\Delta_mg|)(x)	\bigg\|_{l^q}\Bigg\|_{L^p}\nonumber\\
		&=\|\nabla f\|_{L^\infty}\Bigg\|\bigg\|\sum_{|m-j|\leq 4}2^{(j-m)s}M(2^{ms}|\Delta_mg|)(x)	\bigg\|_{l^q}\Bigg\|_{L^p}\nonumber\\
		&\lesssim\|\nabla f\|_{L^\infty}\sum_{j\in\mathbb{Z}}2^{js}\chi_{\{|j|\leq 4\}}\Big\|\big\|M(2^{ms}|\Delta_mg|)(x)	\big\|_{l^q}\Big\|_{L^p}\nonumber\\
		&\lesssim\sum_{j\in\mathbb{Z}}2^{js}\chi_{\{|j|\leq 4\}}\|\nabla f\|_{L^\infty}\Big\|\big\|2^{ms}|\Delta_mg|	\big\|_{l^q}\Big\|_{L^p}\nonumber\\
		&\lesssim\sum_{j\in\mathbb{Z}}2^{js}\chi_{\{|j|\leq 4\}}\|\nabla f\|_{L^\infty}\|g\|_{\dot{F}^s_{p,q}}\label{I_3+Young}\\
		&\lesssim\|\nabla f\|_{L^\infty}\|g\|_{\dot{F}^s_{p,q}}\label{I_3}.
	\end{align}
	
	For the term $I_2$, by applying integration by parts, first-order Taylor formula, and invoking Lemma \ref{miaoguji}, we have
	\begin{align}
		|I_2|&=\left|\sum_{m\in\mathbb{Z}}\Delta_j\left(\Delta_mf^i\tilde{\Delta}_m\partial_ig\right)\right|=\left|\sum_{m\geq j-3}\Delta_j\left(\Delta_mf^i\tilde{\Delta}_m\partial_ig\right)\right|\nonumber\\
		&=\left|\sum_{m\geq j-3}2^{jd}\int_{\mathbb{R}^d}h\left(2^j(x-y)\right)\left(\Delta_mf^i(y)\tilde{\Delta}_m\partial_ig(y)\right)dy\right|\nonumber\\
		&\lesssim\left|\sum_{m\geq j-3}2^{jd+j}\int_{\mathbb{R}^d}(\partial_ih)\left(2^j(x-y)\right)\Delta_mf^i(y)\tilde{\Delta}_mg(y)dy\right|\nonumber\\
		&\mathrel{\phantom{=}}+\left|\sum_{m\geq j-3}2^{jd}\int_{\mathbb{R}^d}h\left(2^j(x-y)\right)\partial_i\Delta_mf^i(y)\tilde{\Delta}_mg(y)dy\right|\label{I_5fenbujifenhou}\\
		&\lesssim\sum_{m\geq j-3}2^jM(\tilde{\Delta}_mg)(x)\|\Delta_mf\|_{L^\infty}+\sum_{m\geq j-3}M(\tilde{\Delta}_mg)(x)\|\nabla\Delta_mf\|_{L^\infty}.\nonumber
	\end{align}
	Now applying Lemma \ref{Bernstein}, Young's inequality and Lemma \ref{Fguji}, for $s>0$, one can find
	\begin{align}
		\Big\|\big\|2^{js}|I_2|\big\|_{l^q}\Big\|_{L^p}
		&\lesssim\Bigg\|\bigg\|\sum_{m\geq j-3}2^{js+j}M(\tilde{\Delta}_mg)(x)\|\Delta_mf\|_{L^\infty}\bigg\|_{l^q}\Bigg\|_{L^p}\nonumber\\
		&\mathrel{\phantom{=}}+\Bigg\|\bigg\|\sum_{m\geq j-3}2^{js}M(\tilde{\Delta}_mg)(x)\|\nabla\Delta_mf\|_{L^\infty}\bigg\|_{l^q}\Bigg\|_{L^p}\nonumber\\
		&\lesssim\Bigg\|\bigg\|\sum_{m\geq j-3}2^{(j-m)(s+1)}M(2^{ms}\tilde{\Delta}_mg)(x)\|\nabla\Delta_mf\|_{L^\infty}\bigg\|_{l^q}\Bigg\|_{L^p}\nonumber\\
		&\mathrel{\phantom{=}}+\Bigg\|\bigg\|\sum_{m\geq j-3}2^{(j-m)s}M(2^{ms}\tilde{\Delta}_mg)(x)\|\nabla\Delta_mf\|_{L^\infty}\bigg\|_{l^q}\Bigg\|_{L^p}\nonumber\\
		&\lesssim\sum_{j\in\mathbb{Z}}2^{j(s+1)}\chi_{\{j\leq 3\}}\Big\|\big\|M(2^{ms}\tilde{\Delta}_mg)(x)\|\nabla\Delta_mf\|_{L^\infty}\big\|_{l^q}\Big\|_{L^p}\nonumber\\
		&\mathrel{\phantom{=}}+\sum_{j\in\mathbb{Z}}2^{js}\chi_{\{j\leq 3\}}\Big\|\big\|M(2^{ms}\tilde{\Delta}_mg)(x)\|\nabla\Delta_mf\|_{L^\infty}\big\|_{l^q}\Big\|_{L^p}\nonumber\\
		&\lesssim\sum_{j\in\mathbb{Z}}2^{js}\chi_{\{j\leq 3\}}\|\nabla f\|_{L^\infty}\Big\|\big\|M(2^{ms}\tilde{\Delta}_mg)(x)\big\|_{l^q}\Big\|_{L^p}\nonumber\\
		&\lesssim\sum_{j\in\mathbb{Z}}2^{js}\chi_{\{j\leq 3\}}\|\nabla f\|_{L^\infty}\Big\|\big\|2^{ms}|\tilde{\Delta}_mg|\big\|_{l^q}\Big\|_{L^p}\nonumber\\
		&\lesssim\sum_{j\in\mathbb{Z}}2^{js}\chi_{\{j\leq 3\}}\|\nabla f\|_{L^\infty}\|g\|_{\dot{F}^s_{p,q}}\label{I_5+young}\\
		&\lesssim\|\nabla f\|_{L^\infty}\|g\|_{\dot{F}^s_{p,q}}\label{I_5}.
	\end{align}
	We note that if $\text{div}f=0 $, then the second term in \eqref{I_5fenbujifenhou}, which results from integration by parts, is identically zero. It then follows from the above argument that for $s>-1$ and $\text{div }f=0$,
	\begin{align}
		\Big\|\big\|2^{js}|I_2|\big\|_{l^q}\Big\|_{L^p}&\lesssim\sum_{j\in\mathbb{Z}}2^{j(s+1)}\chi_{\{j\leq 3\}}\|\nabla f\|_{L^\infty}\|g\|_{\dot{F}^s_{p,q}}\label{I_5+young-divf=0}\\
		&\lesssim\|\nabla f\|_{L^\infty}\|g\|_{\dot{F}^s_{p,q}}\label{I_5-divf=0}.
	\end{align}
	The remaining terms $I_3,I_4,I_5$ can be handled via (4.9) and (4.11) in \cite{MR2592288}, which yields 
	\begin{align}
		\Big\|\big\|2^{js}|I_3+I_4|\big\|_{l^q}\Big\|_{L^p}&\lesssim\sum_{j\in\mathbb{Z}}2^{js}\chi_{\{j\leq 2\}}\|\nabla g\|_{L^\infty}\|f\|_{\dot{F}^s_{p,q}}\label{I_1+I_2+young}\\
		&\lesssim\|\nabla g\|_{L^\infty}\|f\|_{\dot{F}^s_{p,q}},\label{I_1+I_2}
	\end{align}
	and
	\begin{align}
		\Big\|\big\|2^{js}|I_5|\big\|_{l^q}\Big\|_{L^p}&\lesssim\sum_{j\in\mathbb{Z}}2^{js}\chi_{\{|j|\leq 4\}}\|\nabla g\|_{L^\infty}\|f\|_{\dot{F}^s_{p,q}}\label{I_4+young}\\
		&\lesssim\|\nabla g\|_{L^\infty}\|f\|_{\dot{F}^s_{p,q}}.\label{I_4}
	\end{align}
	Combining \eqref{I_3+Young}, \eqref{I_5+young}, \eqref{I_1+I_2+young}, and \eqref{I_4+young}, we obtain, in this case, \eqref{qudiaodiv=0+young}. Similarly, a combination of $\eqref{I_3}$, \eqref{I_5}, \eqref{I_1+I_2} and \eqref{I_4} yields \eqref{qudiaodiv=0} for the present scenario.\\
	\textbf{Case 2: }$\bm{p=1}$ \textbf{with} $\bm{q\in[1,\infty]}$	\textbf{or} $\bm{q=1}$ \textbf{with} $\bm{p\in(1,\infty).}$
	
	Concerning the term $I_1$, we first note that 
	\begin{align*}
		|I_1|&=\left|\sum_{|m-j|\leq 4}\int \left(S_{m-1}f^i(x)-S_{m-1}f^i(y)\right)
		2^{jd}h\left(2^j\left(x-y\right)\right)\Delta_m\partial_ig(y)dy\right|\\
		&\lesssim\left|\sum_{|m-j|\leq 4}\int \left(S_{m-1}f^i(x)-S_{m-1}f^i(y)\right)
		2^{j(d+1)}(\partial_ih)\left(2^j\left(x-y\right)\right)\Delta_mg(y)dy\right|\\
		&\mathrel{\phantom{=}}+\left|\sum_{|m-j|\leq 4}\int \left(S_{m-1}\partial_if^i(x)-S_{m-1}\partial_if^i(y)\right)
		2^{jd}h\left(2^j\left(x-y\right)\right)\Delta_mg(y)dy\right|\\
		&\lesssim\left\|\nabla S_{m-1}f\right\|_{L^\infty
		}\left|\sum_{|m-j|\leq 4}\int\left|x-y\right| 
		2^{j(d+1)}\nabla h\left(2^j\left(x-y\right)\right)\Delta_mg(y)dy\right|\\
		&\mathrel{\phantom{=}}+\|\nabla f\|_{L^\infty}\left|\sum_{|m-j|\leq 4}\int 
		2^{jd} h\left(2^j\left(x-y\right)\right)\Delta_mg(y)dy\right|\\
		&\lesssim\sum_{|m-j|\leq 4}\left\|\nabla f\right\|_{L^\infty}\left[M(|\Delta_mg|^r)(x)\right]^{\frac{1}{r}
		},
	\end{align*}
	here we used first-order Taylor's formula and Lemma \ref{guoguji} with $\theta=1$ and $r\in(0,1)$. Therefore, it follows from Young's inequality and  Lemma \ref{Fguji} that
	\begin{align}
		\Big\|\big\|2^{js}|I_1|\big\|_{l^q}\Big\|_{L^p} 
		&\lesssim \|\nabla f\|_{L^\infty} \Bigg\|\bigg\|\sum_{|m-j|\leq 4} 2^{(j-m)s} \left[M(|2^{ms}\Delta_m g|^r)(x)\right]^{\frac{1}{r}}\bigg\|_{l^q}\Bigg\|_{L^p} \nonumber\\
		&\lesssim \|\nabla f\|_{L^\infty}\sum_{j\in\mathbb{Z}}2^{js}\chi_{\{|j|\leq 4\}}\Big\|\big\|[M(|2^{ms}\Delta_mg|^r)(x)]^{\frac{1}{r}}\big\|_{l^q}\Big\|_{L^p}\nonumber\\
		&=\sum_{j\in\mathbb{Z}}2^{js}\chi_{\{|j|\leq 4\}}\|\nabla f\|_{L^\infty}\Big\|\big\|[M(|2^{ms}\Delta_mg|^r)(x)]\big\|_{l^{\frac{q}{r}}}\Big\|_{L^{\frac{p}{r}}}^{\frac{1}{r}}\nonumber\\
		&\lesssim\sum_{j\in\mathbb{Z}}2^{js}\chi_{\{|j|\leq 4\}}\|\nabla f\|_{L^\infty}\Big\|\big\|2^{ms}|\Delta_mg|^r(x)\big\|_{l^{\frac{q}{r}}}\Big\|_{L^{\frac{p}{r}}}^{\frac{1}{r}}\nonumber\\
		&\lesssim \sum_{j\in\mathbb{Z}}2^{js}\chi_{\{|j|\leq 4\}}\|\nabla f\|_{L^\infty}\|g\|_{\dot{F}^s_{p,q}}\label{endpoint-I_3+young}\\
		&\lesssim\|\nabla f\|_{L^\infty}\|g\|_{\dot{F}^s_{p,q}}\label{endpoint-I_3}.
	\end{align}
	
	Regarding the term $I_2$, since $s>0$, for arbitrary $r\in (0,1)$, one can select $\theta \in (0,1)$ small enough, such that $s>d\frac{\theta}{r}$. Due to frequency interaction \eqref{a.o.c}, one observes that $\Delta_j(\Delta_mf\tilde{\Delta}_mg)\equiv0$ if $m <j-3$. Using  integration by parts and Lemma \ref{guoguji}, we deduce
	\begin{align*}
		|I_2|
		&=\left|\Delta_j\sum_{m\in\mathbb{Z}}\Delta_mf^i\tilde{\Delta}_m\partial_ig\right|=\left|\sum_{m\geq j-3}\Delta_j\left(\Delta_mf^i\tilde{\Delta}_m\partial_ig\right)\right|\\
		&=\left|\sum_{m\geq j-3}\int2^{jd}h\left(2^j(x-y)\right)\Delta_mf^i(y)\tilde{\Delta}_m\partial_ig(y)dy\right|\\
		&\lesssim\left|\sum_{m\geq j-3}\int2^{jd+j}(\partial_ih)\left(2^j(x-y)\right)\Delta_mf^i(y)\tilde{\Delta}_mg(y)dy\right|\\
		&\mathrel{\phantom{=}}+\left|\sum_{m\geq j-3}\int2^{jd}h\left(2^j(x-y)\right)\Delta_m\partial_if^i(y)\tilde{\Delta}_mg(y)dy\right|\\
		&\lesssim\sum_{m\geq j-3}2^j2^{(m-j)\frac{d\theta}{r}}M(|\Delta_mf^i\tilde{\Delta}_mg|^{1-\theta})\left[M(|\Delta_mf^i\tilde{\Delta}_mg|^r)(x)\right]^{\frac{\theta}{r}}\\
		&\mathrel{\phantom{=}}+\sum_{m\geq j-3}2^{(m-j)\frac{d\theta}{r}}M(|\Delta_m\partial_if^i\tilde{\Delta}_mg|^{1-\theta})\left[M(|\Delta_m\partial_if^i\tilde{\Delta}_mg|^r)(x)\right]^{\frac{\theta}{r}}.
	\end{align*}    	
	Thanks to Young's inequality, H\"{o}lder's inequality and Lemma \ref{Bernstein}, one can argue analogously as \eqref{moserzhongRbufen} to infer
	\begin{align}
		&\mathrel{\phantom{=}}\Big\|\big\|2^{js}|I_2|\big\|_{l^q}\Big\|_{L^p}\nonumber\\
		&\lesssim\Bigg\|\bigg\|\sum_{m\geq j-3}2^{(j-m)(s+1-\frac{d\theta}{r})}M(|(2^m\Delta_mf^i)(2^{ms}\tilde{\Delta}_mg)|^{1-\theta})\left[M(|(2^m\Delta_mf^i)(2^{ms}\tilde{\Delta}_mg)|^r)(x)\right]^{\frac{\theta}{r}}\bigg\|_{l^q}\Bigg\|_{L^p}\nonumber\\
		&\mathrel{\phantom{=}}+\Bigg\|\bigg\|\sum_{m\geq j-3}2^{(j-m)(s-\frac{d\theta}{r})}M(|(\Delta_m\partial_if^i)(2^{ms}\tilde{\Delta}_mg)|^{1-\theta})\left[M(|(\Delta_m\partial_if^i)(2^{ms}\tilde{\Delta}_mg)|^r)(x)\right]^{\frac{\theta}{r}}\bigg\|_{l^q}\Bigg\|_{L^p}\nonumber\\
		&\lesssim\sum_{j\in\mathbb{Z}}2^{j(s+1-\frac{d\theta}{r})}\chi_{\{j\leq 3\}}\Bigg\|\bigg\|M(|(2^m\Delta_mf^i)(2^{ms}\tilde{\Delta}_mg)|^{1-\theta})\left[M(|(2^m\Delta_mf^i)(2^{ms}\tilde{\Delta}_mg)|^r)(x)\right]^{\frac{\theta}{r}}\bigg\|_{l^q}\Bigg\|_{L^p}\nonumber\\
		&\mathrel{\phantom{=}}+\sum_{j\in\mathbb{Z}}2^{j(s-\frac{d\theta}{r})}\chi_{\{j\leq 3\}}\Bigg\|\bigg\|M(|(\Delta_m\partial_if^i)(2^{ms}\tilde{\Delta}_mg)|^{1-\theta})\left[M(|(\Delta_m\partial_if^i)(2^{ms}\tilde{\Delta}_mg)|^r)(x)\right]^{\frac{\theta}{r}}\bigg\|_{l^q}\Bigg\|_{L^p}\nonumber\\
		&\lesssim\sum_{j\in\mathbb{Z}}2^{j(s-\frac{d\theta}{r})}\chi_{\{j\leq 3\}}\|\nabla f\|_{L^\infty}\Big\|\big\|M(|2^{ms}\tilde{\Delta}_mg|^{1-\theta})(x)\big[M(2^{ms}\tilde{\Delta}_mg)^r(x)\big]^{\frac{\theta}{r}}\big\|_{l^q}\Big\|_{L^p}\nonumber\\
		&\lesssim \sum_{j\in\mathbb{Z}}2^{j(s-\frac{d\theta}{r})}\chi_{\{j\leq 3\}}\|\nabla f\|_{L^\infty}\|g\|_{\dot{F}^s_{p,q}}\label{endpoint-I_5+young}\\
		&\lesssim \|\nabla f\|_{L^\infty}\|g\|_{\dot{F}^s_{p,q}}\label{endpoint-I_5}.
	\end{align}
	Analogous to \eqref{I_5+young-divf=0} and \eqref{I_5-divf=0}, we find that if $\text{div}f=0$, then for any $s>-1$, we have
	\begin{align}
		\Big\|\big\|2^{js}|I_2|\big\|_{l^q}\Big\|_{L^p}&\lesssim\sum_{j\in\mathbb{Z}}2^{j(s+1-\frac{d\theta}{r})}\chi_{\{j\leq 3\}}\|\nabla f\|_{L^\infty}\|g\|_{\dot{F}^s_{p,q}}\label{endpoint-I_5+young-divf=0}\\
		&\lesssim\|\nabla f\|_{L^\infty}\|g\|_{\dot{F}^s_{p,q}}\label{endpoint-I_5-divf=0}.
	\end{align}
	
	Note that $S_{m+2}\Delta_jf\equiv0$ if $m\leq j-3$, thus
	\begin{align*}
		I_3+I_4
		&=\sum_{m\in\mathbb{Z}}\left(S_{m-1}\Delta_j  \partial_ig\right)\Delta_mf^i+\sum_{m\in\mathbb{Z}}(\Delta_mf^i)(\tilde{\Delta}_m\Delta_j\partial_ig)\\
		&=\sum_{m\in\mathbb{Z}}\left(S_{m+2}\Delta_j  \partial_ig\right)\Delta_mf^i=\sum_{m\geq j-2}\left(S_{m+2}\partial_i\Delta_j  g\right)\Delta_mf^i.
	\end{align*}
	Then thanks to Young's inequality, we get for $s>0$,
	\begin{align}
		\Big\|\big\|2^{js}|I_3+I_4|\big\|_{l^q}\Big\|_{L^p}
		&=\Bigg\|\bigg\|\sum_{m\geq j-2 }\left(2^{(j-m)s}S_{m+2}\partial_i\Delta_j  g\right)2^{ms}\Delta_mf^i \bigg\|_{l^q}\Bigg\|_{L^p}\nonumber\\
		&\lesssim\left\|\nabla g\right\|_{L^\infty}\Bigg\|\bigg\|\sum_{m\geq j-2 }2^{(j-m)s}2^{ms}\Delta_mf \bigg\|_{l^q}\Bigg\|_{L^p}\nonumber\\
		&\lesssim\left\|\nabla g\right\|_{L^\infty}\sum_{j\in\mathbb{Z}}2^{js}\chi_{\{j\leq 2\}}\Big\|\big\|2^{ms}\Delta_mf \big\|_{l^q}\Big\|_{L^p}\nonumber\\
		&\lesssim\sum_{j\in\mathbb{Z}}2^{js}\chi_{\{j\leq 2\}}\left\|\nabla g\right\|_{L^\infty}\|f\|_{\dot{F}^s_{p,q}}\label{endpoint-I_1+I_2+young}\\
		&\lesssim\left\|\nabla g\right\|_{L^\infty}\|f\|_{\dot{F}^s_{p,q}}\label{endpoint-I_1+I_2}.
	\end{align}	    	

	Finally, we estimate the term $I_5$. By \eqref{a.o.c-1}, Lemma \ref{guoguji} with $\theta=1$ and $r\in(0,1)$, Young's inequality and using Lemma \ref{Fguji} as above, we have	    	
	\begin{align}
		\Big\|\big\|2^{js}|I_5|\big\|_{l^q}\Big\|_{L^p}
		&=\Bigg\|\bigg\|2^{js} \sum_{|m-j|\leq 4}\Delta_j\left(S_{m-1}\partial_ig\Delta_mf^i\right)\bigg\|_{l^q}\Bigg\|_{L^p}\nonumber\\
		&\lesssim\Bigg\|\bigg\|2^{js} \sum_{|m-j|\leq 4}\left[M(|S_{m-1}\partial_ig\Delta_mf^i|^r)(x)\right]^\frac{1}{r}\bigg\|_{l^q}\Bigg\|_{L^p}\nonumber\\
		&=\Bigg\|\bigg\| \sum_{|m-j|\leq 4}2^{(j-m)s}\left[M(|S_{m-1}\partial_ig2^{ms}\Delta_mf^i|^r)(x)\right]^\frac{1}{r}\bigg\|_{l^q}\Bigg\|_{L^p}\nonumber\\
		&\lesssim \sum_{j\in\mathbb{Z}}2^{js}\chi_{\{|j|\leq 4\}}\Big\|\big\| \left[M(|S_{m-1}\partial_ig2^{ms}\Delta_mf^i|^r)(x)\right]^\frac{1}{r}\big\|_{l^q}\Big\|_{L^p}\nonumber\\
		&\lesssim\sum_{j\in\mathbb{Z}}2^{js}\chi_{\{|j|\leq 4\}}\|\nabla g\|_{L^\infty}\|f\|_{\dot{F}^s_{p,q}}\label{endpoint-I_4+young}\\
		&\lesssim\|\nabla g\|_{L^\infty}\|f\|_{\dot{F}^s_{p,q}}\label{endpoint-I_4}.
	\end{align}    		    	 
	
	Summing up \eqref{endpoint-I_3+young}, \eqref{endpoint-I_5+young}, \eqref{endpoint-I_1+I_2+young} and \eqref{endpoint-I_4+young} we reach \eqref{qudiaodiv=0+young}. Likewise, By combining \eqref{endpoint-I_3}, \eqref{endpoint-I_5}, \eqref{endpoint-I_1+I_2} and \eqref{endpoint-I_4} we derive \eqref{qudiaodiv=0}.
	
	In order to show \eqref{divf=0jiaohanzi+young} and \eqref{divf=0jiaohanzi}, in view of Proposition 4.1 in \cite{MR2592288} and Proposition 2.12 in \cite{MR4240785}, it suffices to prove \eqref{divf=0jiaohanzi} in the case of $p\in(1,\infty)$ and $q
	=1$ with $\text{div}f$=0. Indeed, we only need to modify the estimates in $I_3+I_4$ and $I_5$ as follows. Note that when $s>-1$ and $\text{div}f=0$,
	\begin{align*}
		\Big\|\big\|2^{js}|I_3+I_4|\big\|_{l^1}\Big\|_{L^p}
		&=\Bigg\|\bigg\|\sum_{m\geq j-2 }\left(2^{(j-m)s}S_{m+2}\partial_i\Delta_j  g\right)2^{ms}\Delta_mf^i \bigg\|_{l^1}\Bigg\|_{L^p}\nonumber\\
		&\lesssim\left\|\nabla\Delta_j g\right\|_{L^\infty}\Bigg\|\bigg\|\sum_{m\geq j-2 }2^{(j-m)s}2^{ms}\Delta_mf \bigg\|_{l^1}\Bigg\|_{L^p}\nonumber\\
		&\lesssim\left\|\Delta_jg\right\|_{L^\infty}\Bigg\|\bigg\|\sum_{m\geq j-2 }2^{(j-m)(s+1)}2^{m(s+1)}\Delta_mf \bigg\|_{l^1}\Bigg\|_{L^p}\nonumber\\
		&\lesssim \sum_{j\in\mathbb{Z}}2^{j(s+1)}\chi_{\{j\leq 2\}}\| g\|_{L^\infty}\Big\|\big\|2^{m(s+1)}\Delta_mf \big\|_{l^1}\Big\|_{L^p}\nonumber\\
		&\lesssim\sum_{j\in\mathbb{Z}}2^{j(s+1)}\chi_{\{j\leq 2\}}\| g\|_{L^\infty}\|f\|_{\dot{F}^{s+1}_{p,1}}\\
		&\lesssim\| g\|_{L^\infty}\|f\|_{\dot{F}^{s+1}_{p,1}},
	\end{align*}
	where we used Lemma \ref{Bernstein} and Young's inequality. Regarding to the term $I_5$, thanks to Lemma \ref{guoguji}, we have
	\begin{align*}
		\Big\|\big\|2^{js}|I_5|\big\|_{l^1}\Big\|_{L^p}
		&=\Bigg\|\bigg\|2^{js} \sum_{|m-j|\leq 4}\Delta_j\left(S_{m-1}\partial_ig\Delta_mf^i\right)\bigg\|_{l^1}\Bigg\|_{L^p}\nonumber\\
		&\lesssim\Bigg\|\bigg\|2^{js} \sum_{|m-j|\leq 4}\left[M(|S_{m-1}\partial_ig\Delta_mf^i|^r)(x)\right]^\frac{1}{r}\bigg\|_{l^1}\Bigg\|_{L^p}\nonumber\\
		&=\Bigg\|\bigg\| \sum_{|m-j|\leq 4}2^{(j-m)s}\left[M(|S_{m-1}\partial_ig2^{ms}\Delta_mf^i|^r)(x)\right]^\frac{1}{r}\bigg\|_{l^1}\Bigg\|_{L^p}\nonumber\\
		&\lesssim \| S_{m-1}g\|_{L^\infty}\Bigg\|\bigg\| \sum_{|m-j|\leq 4}2^{(j-m)s}\left[M(|2^{m(s+1)}\Delta_mf^i|^r)(x)\right]^\frac{1}{r}\bigg\|_{l^1}\Bigg\|_{L^p}\nonumber\\
		&\lesssim \| S_{m-1}g\|_{L^\infty}\sum_{j\in\mathbb{Z}}2^{js}\chi_{\{|j|\leq 4\}}\Bigg\|\bigg\| \left[M(|2^{m(s+1)}\Delta_mf^i|^r)(x)\right]^\frac{1}{r}\bigg\|_{l^1}\Bigg\|_{L^p}\nonumber\\
		&\lesssim\sum_{j\in\mathbb{Z}}2^{js}\chi_{\{|j|\leq 4\}}\| g\|_{L^\infty}\|f\|_{\dot{F}^{s+1}_{p,1}}\\
		&\lesssim\| g\|_{L^\infty}\|f\|_{\dot{F}^{s+1}_{p,1}}.
	\end{align*}
	This completed the proof of Proposition \ref{zijixiedejiaohuanzi}.
	\end{proof}
	\begin{remark}\label{s>n-proposition}
		The proof technique of Proposition \ref{zijixiedejiaohuanzi} readily extends to establish further results under the same assumptions and $n\in \mathbb{N}$: for $s>0$, there holds
		\begin{equation*}
			\Big\|\big\|2^{js}([f,\Delta_j]\cdot \nabla g) \big\|_{l^q(j\geq n)}\Big\|_{L^p}\lesssim\sum_{j\geq n}2^{js}\chi_{\{j\leq 4\}}\left(||\nabla f||_{L^\infty}||g||_{\dot{F}^s_{p,q}}+||\nabla g||_{L^\infty}||f||_{\dot{F}^s_{p,q}}\right),
		\end{equation*}
		or for $s>-1$ and $\textup{div}f=0$,
		\begin{equation*}
			\Big\|\big\|2^{js}([f,\Delta_j]\cdot \nabla g) \big\|_{l^q(j\geq n)}\Big\|_{L^p} \lesssim\sum_{j\geq n}2^{js}\chi_{\{j\leq 4\}}\left(||\nabla f||_{L^\infty}||g||_{\dot{F}^s_{p,q}}+|| g||_{L^\infty}||\nabla f||_{\dot{F}^s_{p,q}}\right),
		\end{equation*}
		where $\|f_j\|_{l^q(j\geq n)}$ stands for $\big(\sum_{j\geq n}|f_j|^q\big)^{\frac{1}{q}}$ with usual modification if $q=\infty$.
	\end{remark}

	\section{A priori estimates for transport equation} 
	In this section, based on Proposition \ref{zijixiedejiaohuanzi}, we establish the a priori estimates Theorem \ref{xianyanguji} for the transport equation \eqref{Eq}.
	
	\begin{proof}[Proof of Theorem \ref{xianyanguji}]
		Applying the frequency localization operator $\Delta_j$ to \eqref{Eq}, one has
		\begin{equation}\label{zuoyongDelta}
			\begin{cases}
				(\partial_t+v\cdot\nabla)\Delta_jf=\Delta_j g+[v,\Delta_j]\cdot\nabla f,\\
				\Delta_jf|_{t=0}=\Delta_jf_0.
			\end{cases}
		\end{equation}
		Let us introduce particle trajectory mapping $X(\alpha,t)$, by definition, the solution to the following ordinary differential equation:
		\begin{equation}\label{liziguidaoyinghshe}
			\begin{cases*}
				\partial_tX(t,\alpha)=v(t,X(t,\alpha)),\\
				X(0,\alpha)=\alpha.
			\end{cases*}
		\end{equation}	    	
		Then, it follows from \eqref{zuoyongDelta} that
		\begin{equation}
			\partial_t\left(\Delta_jf\left(t,X(t,\alpha)\right)\right)=\Delta_jg(t,X(t,\alpha))+[v,\Delta_j]\cdot\nabla f(t,X(t,\alpha)).
		\end{equation}
		If we denote the Jacobian determinant of $X(t,\alpha)$ by $J(t,\alpha)=\det(\nabla_\alpha X)(t,\alpha)$, 
		then we have $\partial_tJ(t,\alpha)=J(t,\alpha)(\text{div}v)(t,X(t,\alpha))$. And thus,
		\begin{align*}
			&\mathrel{\phantom{=}}\partial_t\big(J(t,\alpha)\Delta_jf\left(t,X(t,\alpha)\right)\big)\\
			&=J(t,\alpha)\textup{div}v(t,X(t,\alpha))\Delta_jf\left(t,X(t,\alpha)\right)+J(t,\alpha)\Delta_jg(t,X(t,\alpha))\\
			&\mathrel{\phantom{=}}+J(t,\alpha)[v,\Delta_j]\cdot\nabla f(t,X(t,\alpha)),
		\end{align*}
		which together with $J(0,\alpha)=1$ implies that
		\begin{align}
			&\mathrel{\phantom{=}}\big|J(t,\alpha)\Delta_jf\left(t,X(t,\alpha)\right)\big|\nonumber\\
			&\leq |\Delta_jf_0(\alpha)|+\int_{0}^{t}\big|J(\tau,\alpha)\textup{div}v(\tau,X(\tau,\alpha))\Delta_jf\left(\tau,X(\tau,\alpha)\right)\big|d\tau\nonumber\\
			&\mathrel{\phantom{=}}+\int_{0}^{t}\big|J(\tau,\alpha)\Delta_jg(\tau,X(\tau,\alpha))\big|d\tau+\int_{0}^{t}\big|J(\tau,\alpha)[v,\Delta_j]\cdot\nabla f(\tau,X(\tau,\alpha))\big|d\tau\label{jifenhou}.
		\end{align}
		Multiplying $2^{js}$ and taking $l^q$ norm for $j\in\mathbb{Z}$ on both sides of \eqref{jifenhou}, we get by using the Minkowski inequality that
		\begin{align}
			&\mathrel{\phantom{=}}|J(t,\alpha)|\bigg(\sum_{j\in\mathbb{Z} }\left|2^{js}\Delta_jf(t,X(t,\alpha))\right|^{q}\bigg)^{\frac{1}{q}}\nonumber\\
			&\leq\bigg(\sum_{j\in\mathbb{Z} }\left|2^{js}\Delta_jf_0(\alpha)\right|^{q}\bigg)^{\frac{1}{q}}\nonumber+\int_{0}^{t}|J(\tau,\alpha)|(\textup{div}v)(\tau,X(\tau,\alpha))\bigg(\sum_{j\in\mathbb{Z} }\left|2^{js}\Delta_jf(\tau,X(\tau,\alpha))\right|^{q}\bigg)^{\frac{1}{q}}d\tau\nonumber\\
			&\mathrel{\phantom{=}}+\int_{0}^{t}|J(\tau,\alpha)|\bigg(\sum_{j\in\mathbb{Z} }\left|2^{js}\Delta_jg(\tau,X(\tau,\alpha))\right|^{q}\bigg)^{\frac{1}{q}}d\tau\nonumber\\
			&\mathrel{\phantom{=}}+\int_{0}^{t}|J(\tau,\alpha)|\bigg(\sum_{j\in\mathbb{Z} }\left|2^{js}\Big(\big([v,\Delta_j]\cdot\nabla f\big)(\tau,X(\tau,\alpha))\Big)\right|^{q}\bigg)^{\frac{1}{q}}d\tau\label{lqhou},
		\end{align}		
		with the  usual modification if $q=\infty$. Next, taking the $L^p$ norm with respect to $\alpha\in\mathbb{R}^d$ on both sides of \eqref{lqhou}, we get by using the Minkowski inequality that
		\begin{align}
			&\mathrel{\phantom{=}}\Bigg(\int_{\mathbb{R}^d}\Big|J(\tau,\alpha)\Big(\sum_{j\in\mathbb{Z} }\Big|2^{js}\Delta_jf(t,X(t,\alpha))\Big|^{q}\Big)^{\frac{1}{q}}\Big|^pd\alpha\Bigg)^\frac{1}{p}\nonumber\\
			&\leq\|f_0\|_{\dot{F}^s_{p,q}}+\int_{0}^{t}\Bigg(\int_{\mathbb{R}^d}\Big|J(\tau,\alpha)(\textup{div}v)(\tau,X(\tau,\alpha))\Big(\sum_{j\in\mathbb{Z} }\left|2^{js}\Delta_jf(\tau,X(\tau,\alpha))\right|^{q}\Big)^{\frac{1}{q}}\Big|^pd\alpha\Bigg)^\frac{1}{p}d\tau\nonumber\\
			&+\int_{0}^{t}\Bigg(\int_{\mathbb{R}^d}\Big|J(\tau,\alpha)\Big(\sum_{j\in\mathbb{Z} }\big|2^{js}\Delta_jg(\tau,X(\tau,\alpha))\big|^{q}\Big)^{\frac{1}{q}}\Big|^pd\alpha\Bigg)^\frac{1}{p}d\tau\nonumber\\
			&+\int_{0}^{t}\Bigg(\int_{\mathbb{R}^d}\Big|J(\tau,\alpha)\Big(\sum_{j\in\mathbb{Z} }\big|2^{js}\big(([v,\Delta_j]\cdot\nabla f)(\tau,X(\tau,\alpha))\big)\big|^{q}\Big)^{\frac{1}{q}}\Big|^pd\alpha\Bigg)^\frac{1}{p}d\tau,\label{Lqhou}
		\end{align}	
		with the  usual modification if $p=\infty$. 
		Thus, it follows from \eqref{Lqhou} and the change of variables formula that
		\begin{align}
			\|f\|_{\dot{F}^s_{p,q}}&\leq\|f_0\|_{\dot{F}^s_{p,q}}+\int_{0}^{t}\|\textup{div}v\|_{L^\infty}||f||_{\dot{F}^s_{p,q}}d\tau+\int_{0}^{t}\|g\|_{\dot{F}^s_{p,q}}d\tau\nonumber\\
			&\quad+\int_{0}^{t}\Big\|\big\|2^{js}[v,\Delta_j]\cdot\nabla f\big\|_{l^q}\Big\|_{L^p}d\tau\label{cedububian}.
		\end{align}
		Thanks to \eqref{qudiaodiv=0} in Proposition \ref{zijixiedejiaohuanzi}, the last term on the right of \eqref{cedububian} is dominated by 
		\begin{equation}\label{jiaohuanzikongzhi}
			\int_{0}^{t}\|\nabla v\|_{L^\infty}\|f\|_{\dot{F}^s_{p,q}}+\|\nabla f\|_{L^\infty}\|v\|_{\dot{F}^s_{p,q}}d\tau,
		\end{equation} 
		or when $\text{div}v=0$ and $s>-1$, it is controlled by
		\begin{equation}\label{jiaohuanzikongzhi-divv=0}
			\int_{0}^{t}\|\nabla v\|_{L^\infty}\|f\|_{\dot{F}^s_{p,q}}+\| f\|_{L^\infty}\|\nabla v\|_{\dot{F}^s_{p,q}}d\tau.
		\end{equation}
		Substituting \eqref{jiaohuanzikongzhi} and \eqref{jiaohuanzikongzhi-divv=0} into \eqref{cedububian}, respectively, one yields for $s>0$,
		\begin{align}
			\|f\|_{\dot{F}_{p,q}^s} \leq &\|f_0\|_{\dot{F}_{p,q}^s} + \int_0^t \|{\rm div}v\|_{L^\infty} \|f\|_{\dot{F}_{p,q}^s}d\tau + \int_0^t \|g\|_{\dot{F}_{p,q}^s} d\tau \nonumber \\
			&+ C\int_0^t \big( \|\nabla v\|_{L^\infty} \|f\|_{\dot{F}_{p,q}^s} + \|\nabla f\|_{L^\infty} \|v\|_{\dot{F}_{p,q}^s} \big) d\tau,\label{qiciF-priori-estimates}
		\end{align}
		and for $s>-1$ and $\text{div }v=0$,
		\begin{align}
			\|f\|_{\dot{F}_{p,q}^s} \leq &\|f_0\|_{\dot{F}_{p,q}^s} + \int_0^t \|{\rm div}v\|_{L^\infty} \|f\|_{\dot{F}_{p,q}^s}d\tau + \int_0^t \|g\|_{\dot{F}_{p,q}^s} d\tau \nonumber \\
			&+ C\int_0^t \big( \|\nabla v\|_{L^\infty} \|f\|_{\dot{F}_{p,q}^s} + \| f\|_{L^\infty} \|\nabla v\|_{\dot{F}_{p,q}^s} \big) d\tau.\label{qiciF-priori-estimates-divv=0}
		\end{align}
		If $s>1+\frac{d}{p}$ when $p>1$ or $s\geq 1+d$ when $p=1$, and $\nabla v \in L^1(0,T;F^{s-1}_{p,q}(\mathbb{R}^d))$, we obtain
		\begin{align}
			\|f\|_{\dot{F}^s_{p,q}}&\leq\|f_0\|_{\dot{F}^s_{p,q}}+\int_{0}^{t}\|\textup{div}v\|_{L^\infty}||f||_{\dot{F}^s_{p,q}}d\tau+\int_{0}^{t}\|g\|_{\dot{F}^s_{p,q}}d\tau+C\int_{0}^{t}\|\nabla v\|_{F^{s-1}_{p,q}}\| f\|_{F^{s}_{p,q}}d\tau\nonumber\\
			&\leq\|f_0\|_{\dot{F}^s_{p,q}}+\int_{0}^{t}\|g\|_{\dot{F}^s_{p,q}}d\tau+C\int_{0}^{t}\|\nabla v\|_{F^{s-1}_{p,q}}\| f\|_{F^{s}_{p,q}}d\tau\label{qiciFguii},
		\end{align}
		here we used Lemma \ref{lem:Triebel-Lizorkin-properties} $(ii)$, Lemma \ref{tiduheqiciF} and the fact  $\|f\|_{F^s_{p,q}}=\|f\|_{\dot{F}^s_{p,q}}+\|f\|_{L^p}$ as $s>0$.
		
		Now, we estimate the $L^p$ norm of $f$. Multiplying $\text{sgn}(f)|f|^{p-1}$ on both sides of \eqref{Eq}, and integrating the resulting equation over $\mathbb{R}^d$, we deduce
		\begin{align}
			\|f\|_{L^p}&\leq \|f_0\|_{L^p}+\int_{0}^{t}\|g\|_{L^p}d\tau+\frac{1}{p}\int_{0}^{t}\|\text{div} v\|_{L^\infty}\|f\|_{L^p}d\tau\label{equation13}\\
			&\leq \|f_0\|_{L^p}+\int_{0}^{t}\|g\|_{L^p}d\tau+C\int_{0}^{t}\|\nabla v\|_{F^{s-1}_{p,q}}\|f\|_{F^s_{p,q}}d\tau,\label{equation14}
		\end{align}
		Hence, from \eqref{equation13} together with \eqref{qiciF-priori-estimates} and \eqref{qiciF-priori-estimates-divv=0}  we obtain \eqref{feiqiciF-priori-estimates} and \eqref{feiqiciF-priori-estimates-divv=0}, respectively, and from substituting \eqref{equation14} into $\eqref{qiciFguii}$, we arrive at \eqref{xianyangujijifenshizi}. Then applying the Gronwall inequality, one reaches \eqref{xianyangujishizi}. If $f=v$, $v\in L^1(0,T;L^\infty)$, $s>0$ , we find that \eqref{jiaohuanzikongzhi} reduces to $\int_{0}^t\|\nabla v\|_{L^\infty}\|f\|_{\dot{F}^s_{p,q}}d\tau$. A slight modification of the preceding proof yields the desired result for this case. Therefore, we complete the proof of Theorem \ref{xianyanguji}.
	\end{proof} 
	   
	\section{Local well-posedness for transport equation}
	We now address the local well-posedness result Theorem \ref{shuyunfangchengcunzaixing} for the transport equation \eqref{Eq} with data in the Triebel-Lizorkin spaces. 
	
	\begin{proof}[Proof of Theorem \ref{shuyunfangchengcunzaixing}]
		We first smooth out the data and the velocity filed $v$ by setting 
		\begin{align*}
			f^n_0\triangleq S_nf_0,\qquad g^n=\rho _n\ast_tS_ng\quad \text{and} \quad v^n=\rho _n\ast_tS_nv,
		\end{align*}
		where $\rho_n\triangleq\rho_n(t)$ stands for a sequence of mollifiers with respect to the time variable. We clearly have $f^n_0\in F^\infty_{p,q},g^n\in C([0,T];F^\infty_{p,q}),v^n\in C([0,T]\times\mathbb{R}^d)$ and $\nabla v^n\in C([0,T];F^\infty_{p,q})$ with $F^\infty_{p,q}\triangleq\cap_{s\in\mathbb{R}}F^s_{p,q}$. Moreover,  $f^n_0$ is uniformly bounded in $F^s_{p,q}$, $g^n$ is uniformly bounded in $L^1(0,T;F^\infty_{p,q})$, $v^n$ is uniformly bounded in  $L^\rho(0,T;F^{-M}_{\infty,\infty})$ and $\nabla v^n$ is uniformly  bounded in  $L^1(0,T;F^{s-1}_{p,q})$.
		
		Let $f^n$ be the solution to the following equation:
		\begin{equation}\label{nEq}
			\begin{cases*}
				\partial_tf^n+v^n\cdot\nabla f^n=g^n,\\
				f^n|_{t=0}=f^n_0.
			\end{cases*}		
		\end{equation}
		Clearly, $f^n$ is smooth, and according to Theorem \ref{xianyanguji}, one has 
		\begin{equation}
			\|f^n\|_{F^s_{p,q}}\leq e^{C\int_{0}^{t}\|\nabla v^n(\tau)\|_{F^{s-1}_{p,q}}d\tau}\bigg(\|f^n_0\|_{F^s_{p,q}}+\int_{0}^{t}\|g^n(\tau)\|_{F^s_{p,q}}e^{-C\int^\tau_0\|\nabla v^n(\tau')\|_{F^{s-1}_{p,q}}d\tau'}\bigg).
		\end{equation}
		Thus, in view of the uniform bounds for $f^n_0,g^n$ and $v^n$, we conclude that the sequence $\{f^n\}_{n\in\mathbb{N}}$ is uniformly bounded in $C([0,T];F^s_{p,q})$.
		
		In order to prove the convergence of a subsequence, we appeal to compactness arguments. Firstly, notice that
		\begin{equation}\label{equation12}
			\partial_tf^n-g^n=-v^n\cdot \nabla f^n.
		\end{equation}
		Since $\nabla f^n$ is uniformly bounded in $L^\infty(0,T;F^{s-1}_{p,q})$ and $v^n$ is uniformly bounded in $L^1(0,T;F^{s}_{p,q})$, one can conclude by appealing to Lemma \ref{lem:product} that the right hand-side of \eqref{equation12} is uniformly bounded in $L^1(0,T;F^{s-1}_{p,q})$.  Integrating in time and denoting $\bar{f}^n(t)\triangleq f^n(t)-\int_{0}^tg^n(\tau)d\tau$,  we thus gather that there exists some $\beta>0$ such that the sequence  $\{\bar{f}^n\}_{n\in\mathbb{N}}$ is uniformly bounded in $C^\beta([0,T];F^{s-1}_{p,q})$, hence uniformly equicontinuous with values in $F^{s-1}_{p,q}$.
		
		Next, observe that the map $f\mapsto\phi f$ is compact from $F^s_{p,q}$ to $ F^{s-1}_{p,q}$ for all $\phi\in C^\infty_c$ (by virtue of the embedding property on page 60 in \cite{MR1410258} and Theorem 5.1.3 in \cite{MR1163193}). Combining the Arzel\`{a}-Ascoli theorem and the Cantor diagonal process thus ensures that, up to a subsequence, the sequence $\{\bar{f}^n\}_{n\in\mathbb{N}}$ converges in $\mathscr{S}'$ to some distribution $\bar{f}$ such that $\phi\bar{f}\in C([0,T];F^{s-1}_{p,q})$ for all $\phi\in C^\infty_c$. 
		
		Finally, appealing once again to the uniform bounds in $L^\infty([0,T];F^s_{p,q})$ and the Fatou property (Lemma \ref{lem:Triebel-Lizorkin-properties} ($iii$)) for Triebel-Lizorkin spaces, we get $\bar{f}\in L^\infty([0,T];F^{s}_{p,q})$. By an interpolation argument, together with the bounds in $L^\infty([0,T];F^{s}_{p,q})$ for $\{\bar{f}^n\}_{n\in\mathbb{N}}$, we find that $\phi\bar{f}^n\mapsto \phi\bar{f}$ in $C([0,T];F^{s'}_{p,q})$ for any $s'<s$ and $\phi\in C^\infty_c$ so that we may pass to the limit in the equation for $f^n$, in the sense of distribution. Besides, the sequences $\{f_0^n\}_{n\in\mathbb{N}}$, $\{g^n\}_{n\in\mathbb{N}}$ and  $\{v^n\}_{n\in\mathbb{N}}$ converge respectively to $f_0$, $g$, and $v$, which may be easily deduced from their definitions. We conclude that the function $f\triangleq\bar{f}+\int_{0}^tg(\tau)d\tau$ is a solution to \eqref{Eq}.

		We still have to prove that $f\in C([0,T];F^s_{p,q})$ in the case where $q< \infty$. Just by looking at the equation \eqref{Eq}, it is easy to get $\partial_tf\in L^1(0,T;F^{-M'}_{p,\infty})$ for some large enough $M'$. Hence $f\in C([0,T];F^{-M'}_{p,\infty})$, whence $S_nf\in C([0,T];F^s_{p,q})$ for all $n\in\mathbb{N}$. Note that 
		\begin{equation*}
			\Delta_j(f-S_nf)=\sum_{\substack{|j-j'|\leq 1\\j'\geq n}}\Delta_j\Delta_{j'}f.
		\end{equation*}
		For $1<p,q<\infty$, using Lemma \ref{miaoguji}, Young's inequality and Lemma \ref{Fguji}, we have 
		\begin{align}
			\|f-S_nf\|_{\dot{F}^s_{p,q}}&=\Big\|\big\|\sum_{\substack{|j-j'|\leq 1\\j'\geq n}}2^{js}\Delta_j\Delta_{j'}f\big\|_{l^q}\Big\|_{L^p}\nonumber\\
			&\leq C\Big\|\big\|\sum_{\substack{|j-j'|\leq 1\\j'\geq n}}2^{(j-j')s}2^{j's}M(\Delta_{j'}f)\big\|_{l^q}\Big\|_{L^p}\nonumber\\
			&\leq  C\Big\|\big\|2^{j's}\Delta_{j'}f\big\|_{l^q(j'\geq n)}\Big\|_{L^p}.\label{equation19}		
		\end{align}
		The above \eqref{equation19} also holds true for $p = 1$ or $q=1$, since Lemma \ref{guoguji}, Young's inequality and Lemma \ref{Fguji}.
		Similar to the proof of \eqref{qiciF-priori-estimates}, for any $n\in\mathbb{N}$, consider  $j'\geq n$ instead of $j'\in\mathbb{Z}$, we can show that
		\begin{align*}
			\Big\|\big\|2^{j's}\Delta_{j'}f\big\|_{l^q(j'\geq n)}\Big\|_{L^p}&\leq \Big\|\big\|2^{j's}\Delta_{j'}f_0\big\|_{l^q(j'\geq n)}\Big\|_{L^p}+\int_{0}^t\|\text{div} v\|_{L^\infty}\Big\|\big\|2^{j's}\Delta_{j'}f\big\|_{l^q(j'\geq n)}\Big\|_{L^p}d\tau\nonumber\\
			&\quad+\int_{0}^t\Big\|\big\|2^{j's}\Delta_{j'}g\big\|_{l^q(j'\geq n)}\Big\|_{L^p}+\int_{0}^{t}\Big\|\big\|2^{j's}[v,\Delta_j]\cdot\nabla f\big\|_{l^q(j'\geq n)}\Big\|_{L^p}d\tau,
		\end{align*}
		which along with \eqref{equation19} and Gronwall's inequality yields
		\begin{align}
			\|f-S_nf\|_{L^\infty_T(\dot{F}^s_{p,q})}&\leq Ce^{C\int_{0}^{T}\|\nabla v\|_{L^\infty}d\tau}\Big(\Big\|\big\|2^{j's}\Delta_{j'}f_0\big\|_{l^q(j'\geq n)}\Big\|_{L^p}+\int^T_0\Big\|\big\|2^{j's}\Delta_{j'}g\big\|_{l^q(j'\geq n)}\Big\|_{L^p}d\tau\nonumber\\
			&\quad+\int_{0}^T\Big\|\big\|2^{j's}[v,\Delta_j]\cdot\nabla f\big\|_{l^q(j'\geq n)}\Big\|_{L^p}d\tau\Big)\label{equation3}.
		\end{align}
		The first term of right hand-side in \eqref{equation3} clearly tends to zero when $n$ goes to infinity. Thanks to Remark \ref{s>n-proposition}, the commutator term in the third term of right hand-side of \eqref{equation3} tends to zero when $n$ goes to infinity. So, the terms in the integrals approach to zero for almost every $t$. Hence, by virtue of Lebesgue's dominated convergence theorem, $\|f-S_nf\|_{L^\infty_T(\dot{F}^s_{p,q})}$ tends to zero when $n$ goes to infinity. Moreover, $\|f-S_nf\|_{L^p}\to 0\ (n\to\infty)$ for all $t\in[0,T]$. Thus, we conclude that  $\left\|f-S_nf\right\|_{L^\infty_T(F^s_{p,q})}\rightarrow 0 $ when $n\rightarrow\infty$. This achieves to proving that $f\in C([0,T];F^s_{p,q})$ in the case $q<\infty$.
		
		When $q=\infty$, note that for any $s'<s$, we have the embedding $F^s_{p,\infty}\hookrightarrow F^{s'}_{p,1}$ so that the above argument may be repeated in the space $F^{s'}_{p,1}$, this yields  $f\in C([0,T];F^{s'}_{p,1})$.
		
		For the  uniqueness and continuity with respect to the initial data, if we are given  $(f^1,f^2)\in L^\infty(0,T;F^s_{p,q}\times F^s_{p,q})\cap C([0,T];\mathscr{S}'\times\mathscr{S}')$ two solutions to \eqref{Eq} with initial data $f^1_0,f^2_0\in F^s_{p,q}$. Denote $w=f^1-f^2$, then $w\in L^\infty(0,T;F^s_{p,q})$ solves the following  transport equation:
		\begin{equation}
			\begin{cases}
				\partial_tw+v\cdot\nabla w=0,\\
				w|_{t=0}=f^1_0-f^2_0.
			\end{cases}			
		\end{equation} 
		In view of \eqref{xianyangujishizi} in Theorem \ref{xianyanguji}, we have for every $t\in[0,T],$
		\begin{equation}
			||f^1-f^2||_{F^{s}_{p,q}}\leq e^{C\int_{0}^t\|\nabla v\|_{F^{s-1}_{p,q}}d\tau}\|f^1_0-f^2_0\|_{F^{s}_{p,q}},
		\end{equation}
		which implies the uniqueness of the solution and its continuous dependence.
		Therefore, we complete the proof of Theorem \ref{shuyunfangchengcunzaixing}. 
	\end{proof}
	
	\section{Solving the ideal MHD equations in Triebel-Lizorkin spaces}
	In this section, we apply the preceding theory for transport equation \eqref{Eq} to establish the local well-posedness and blow-up criteria for the ideal MHD equations by proving Theorems \ref{MHD-Local-well-posedness} and \ref{Blow-up Criterion}.
	
	\begin{proof}[Proof of Theorem \ref{MHD-Local-well-posedness}]
		\textbf{Existence:} We prove it in the following four steps.\\
		\textbf{Step 1. Approximate solutions.}
		
		Staring from $(u^{(0)},b^{(0)})=(0,0)$, from Theorem \ref{shuyunfangchengcunzaixing} we then define by induction a sequence of smooth functions $(u^{(n)},b^{(n)})_{n\in\mathbb{N}}$ by solving the following systems:
		\begin{equation}\label{jinsiMHD}
			\begin{cases}
				\partial_tu^{(n+1)}+u^{(n)}\cdot\nabla u^{(n+1)}-b^{(n)}\cdot\nabla b^{(n+1)}=-\nabla\tilde{\pi}_1^{(n+1)}\\
				\partial_tb^{(n+1)}+u^{(n)}\cdot\nabla b^{(n+1)}-b^{(n)}\cdot\nabla u^{(n+1)}=-\nabla\tilde{\pi}_2^{(n+1)}\\
				\nabla\cdot b^{(n+1)}=\nabla \cdot u^{(n+1)}=0,\\
				(u^{(n+1)},b^{(n+1)})|_{t=0}=S_{n+2}(u_0,b_0),
			\end{cases}
		\end{equation}
		where $\tilde{\pi}_1^{(n+1)}=p^{(n+1)}+\frac{1}{2}(b^{(n+1)})^2$ and $\tilde{\pi}_2^{(n+1)}=0$.
		
		We set 
		\begin{equation*}
			z^{+^{(n)}}=u^{(n)}+b^{(n)},\quad z^{-^{(n)}}=u^{(n)}-b^{(n)}.
		\end{equation*}
		Then \eqref{jinsiMHD} can be reduced to
		\begin{equation}\label{jinsiIMHD}
			\begin{cases}
				\partial_tz^{+^{(n+1)}}+(z^{-^{(n)}}\cdot\nabla)z^{+{(n+1)}}=-\nabla\pi_1^{(n+1)},\\
				\partial_tz^{-^{(n+1)}}+(z^{+^{(n)}}\cdot\nabla)z^{-{(n+1)}}=-\nabla\pi_2^{(n+1)},\\
				\nabla\cdot z^{+^{(n+1)}}=\nabla\cdot z^{-^{(n+1)}}=0,\quad\forall n\in\mathbb{N}\\
				z^{+^{(n+1)}}(0)=S_{n+2}z^+_0,\quad z^{-^{(n+1)}}(0)=S_{n+2}z^-_0,
			\end{cases}
		\end{equation}
		where $\pi_1^{(n+1)}=\pi_2^{(n+1)}=p^{(n+1)}+\frac{1}{2}(b^{(n+1)})^2 $ and $(z^{+^{(0)}},z^{-^{(0)}})=(0,0)$.\\
		\textbf{Step 2. Uniform bounds.}
		
		According to \eqref{xianyangujijifenshizi} in Theorem \ref{xianyanguji}, we have the following inequality for all $n\in\mathbb{N}$,
		\begin{align}		
			&\mathrel{\phantom{=}}\|z^{+^{(n+1)}}\|_{F^s_{p,q}}+\|z^{-^{(n+1)}}\|_{F^s_{p,q}}\nonumber\\
			&\leq C(\|z^{+}_0\|_{F^s_{p,q}}+\|z^{-}_0\|_{F^s_{p,q}})+\int_0^t\|\nabla\pi^{(n+1)}_1\|_{F^s_{p,q}}+\|\nabla\pi^{(n+1)}_2\|_{F^s_{p,q}}d\tau\nonumber\\
			&\mathrel{\phantom{=}}+C\int^t_0\|\nabla z^{-^{(n)}}\|_{F^{s-1}_{p,q}}\|z^{+^{(n+1)}}\|_{F^s_{p,q}}+\|\nabla z^{+^{(n)}}\|_{F^{s-1}_{p,q}}\|z^{-^{(n+1)}}\|_{F^s_{p,q}}d\tau,\label{jinsiIMHDxianyangujihou}	
		\end{align}
		where we used the fact that 
		\begin{equation*}
			\|S_{n+2}z^{+}_0\|_{F^s_{p,q}}+\|S_{n+2}z^{-}_0\|_{F^s_{p,q}}\leq C(\|z^{+}_0\|_{F^s_{p,q}}+\|z^{-}_0\|_{F^s_{p,q}})
		\end{equation*}
		for some constant $C$ independent of $n$, which is ensured by Lemma \ref{lem:Snguji}. Taking the divergence on both sides of the first equation in \eqref{jinsiIMHD}, we obtain the following representation of the pressure:
		\begin{equation}\label{pi_1Riesz}
			\pi_1^{(n+1)}=(-\Delta)^{-1}(\partial_jz_i^{-^{(n)}}\partial_iz_j^{+^{(n+1)}})=(-\Delta)^{-1}\partial_j\partial_i(z_i^{-^{(n)}}z_j^{+^{(n+1)}}).
		\end{equation}
		For $l,m\in[1,d]$, we have
		\begin{equation*}
			\partial_l\partial_m\pi^{(n+1)}_1=(-\Delta)^{-1}\partial_l\partial_m(\partial_jz_i^{-^{(n)}}\partial_iz_j^{+^{(n+1)}})=\mathscr{R}_{l}\mathscr{R}_{m}(\partial_jz_i^{-^{(n)}}\partial_iz_j^{+^{(n+1)}}),
		\end{equation*}
		where $\mathscr{R}_l$ denotes the Riesz transform. Thanks to the boundedness of the Riesz transform in the homogeneous Triebel-Lizorkin spaces \cite{MR1009119}, Proposition \ref{moser-type-estimate} and Lemma \ref{tiduheqiciF}, one gets
		\begin{align}
			\|\nabla\pi^{(n+1)}_1\|_{\dot{F}^s_{p,q}}
			&\leq C\|\partial_jz_i^{-^{(n)}}\partial_iz_j^{+^{(n+1)}}\|_{\dot{F}^{s-1}_{p,q}}\nonumber\\
			&\leq C\|\nabla z^{-^{(n)}}\|_{L^\infty}\|\nabla z^{+^{(n+1)}}\|_{\dot{F}^{s-1}_{p,q}}+\|\nabla z^{+^{(n+1)}}\|_{L^\infty}\|\nabla z^{-^{(n)}}\|_{\dot{F}^{s-1}_{p,q}}\nonumber\\
			&\leq C\|\nabla z^{-^{(n)}}\|_{L^\infty}\| z^{+^{(n+1)}}\|_{\dot{F}^{s}_{p,q}}+\|\nabla z^{+^{(n+1)}}\|_{L^\infty}\| z^{-^{(n)}}\|_{\dot{F}^{s}_{p,q}}.
		\end{align}	
		Using Young's inequality and Lemma \ref{p=1Reszi}, we get
		\begin{align*}
			\|S_0\nabla\pi^{(n+1)}_1\|_{L^p}&=C\|S_0\partial_k(-\Delta)^{-1}\partial_j\partial_i(z^{-^{(n)}}_iz^{+^{(n+1)}}_j)\|_{L^p}\\
			&=C\big\|\mathscr{F}^{-1}\big(m(\xi)\xi_k\big)\ast\big(z^{-^{(n})}_i z^{+^{(n+1)}}_j\big)\big\|_{L^p}\\
			&\leq C\| z^{+^{(n+1)}}\|_{L^\infty}\| z^{-^{(n)}}\|_{L^p}.
		\end{align*}
		Now, from \cite{MR1163193}, $\|S_0f\|_{L^p}+\|f\|_{\dot{F}^s_{p,q}}$  is an equivalent norm in $F^s_{p,q}$ for $s>0,(p,q)\in[1,\infty)\times[1,\infty]$ or $p=q=\infty$, we have 
		\begin{equation}\label{pi1}
			\|\nabla\pi^{(n+1)}_1\|_{F^s_{p,q}}\leq C\|z^{-^{(n)}}\|_{F^s_{p,q}}\|z^{+^{(n+1)}}\|_{F^s_{p,q}},
		\end{equation}
		where we used $F^{s-1}_{p,q}(\mathbb{R}^d)\hookrightarrow  L^\infty(\mathbb{R}^d)$. 
		Similar to the proof of \eqref{pi1}, we conclude that 
		\begin{equation}\label{pi2}
			\|\nabla\pi^{(n+1)}_2\|_{F^s_{p,q}}\leq C\|z^{+^{(n)}}\|_{F^s_{p,q}}\|z^{-^{(n+1)}}\|_{F^s_{p,q}}.
		\end{equation}
		By summing up \eqref{jinsiIMHDxianyangujihou}, \eqref{pi1} and \eqref{pi2}, we obtain
		\begin{align*}		
			&\mathrel{\phantom{=}}\|z^{+^{(n+1)}}\|_{F^s_{p,q}}+\|z^{-^{(n+1)}}\|_{F^s_{p,q}}\\
			&\leq C(\|z^{+}_0\|_{F^s_{p,q}}+\|z^{-}_0\|_{F^s_{p,q}})+C\int^t_0\bigg(\| z^{-^{(n)}}\|_{F^{s}_{p,q}}\|z^{+^{(n+1)}}\|_{F^s_{p,q}}+\| z^{+^{(n)}}\|_{F^{s}_{p,q}}\|z^{-^{(n+1)}}\|_{F^s_{p,q}}\bigg)d\tau\\
			&\leq C(\|z^{+}_0\|_{F^s_{p,q}}+\|z^{-}_0\|_{F^s_{p,q}})+C\int^t_0\big(\| z^{-^{(n)}}\|_{F^{s}_{p,q}}+\| z^{+^{(n)}}\|_{F^{s}_{p,q}}\big)\big(\|z^{-^{(n+1)}}\|_{F^s_{p,q}}+\|z^{+^{(n+1)}}\|_{F^s_{p,q}}\big)d\tau.
		\end{align*}
		Applying Gronwall's inequality, one gets
		\begin{equation}
			\|z^{+^{(n+1)}}\|_{F^s_{p,q}}+\|z^{-^{(n+1)}}\|_{F^s_{p,q}}\leq C(\|z^{+}_0\|_{F^s_{p,q}}+\|z^{-}_0\|_{F^s_{p,q}})e^{C\int_0^t(\|z^{+^{(n)}}\|_{F^s_{p,q}}+\|z^{-^{(n)}}\|_{F^s_{p,q}})d\tau},
		\end{equation}
		which ensures that there exists $0<T_0<\frac{\ln2}{2C(\|z^{+}_0\|_{F^s_{p,q}}+\|z^{-}_0\|_{F^s_{p,q}})}$ such that for all $n,t\in[0,T_0]$,
		\begin{equation}\label{IMHDyoujie}
			\|z^{+^{(n)}}\|_{F^s_{p,q}}+\|z^{-^{(n)}}\|_{F^s_{p,q}}\leq 2C(\|z^+_0\|_{F^s_{p,q}}+\|z^-_0\|_{F^s_{p,q}}).
		\end{equation}
		\textbf{Step 3. Convergence.}
		
		We now proceed to prove that there exists a time $T\in(0,T_0]$ independent of $n$ such that $(z^{+^{(n)}},z^{-^{(n)}})_{n\in\mathbb{N}}$ is a Cauchy sequence in $C([0,T];F^{s-1}_{p,q}\times F^{s-1}_{p,q})$. For this purpose, we set
		\[ \delta z^{+^{(n+1)}}=z^{+^{(n+1)}}-z^{+^{(n)}},\quad \delta z^{-^{(n+1)}}=z^{-^{(n+1)}}-z^{-^{(n)}}, \]
		\[ \delta \pi^{(n+1)}_j=\pi^{(n+1)}_j-\pi^{(n)}_j,\quad j=1,2. \]
		From \eqref{jinsiIMHD} for all $n\in\mathbb{N}$, we have
		\begin{equation}\label{IMHDCauchy}
			\begin{cases}
				\partial_t\delta z^{+^{(n+1)}}+z^{-^{(n)}}\cdot\nabla\delta z^{+{(n+1)}}=-\delta z^{-^{(n)}}\cdot\nabla z^{+{(n)}}-\nabla\delta\pi_1^{(n+1)},\\
				\partial_t\delta z^{-^{(n+1)}}+z^{+^{(n)}}\cdot\nabla\delta z^{-{(n+1)}}=-\delta z^{+^{(n)}}\cdot\nabla z^{-{(n)}}-\nabla\delta\pi_2^{(n+1)},\\
				(\delta z^{+^{(n+1)}},\delta z^{-^{(n+1)}})|_{t=0}=\Delta_{n+1}(z^+_0,z^-_0).
			\end{cases}
		\end{equation}
		By means of \eqref{feiqiciF-priori-estimates-divv=0} in Theorem \ref{xianyanguji} and the embedding $F^{s-1}_{p,q}\hookrightarrow L^\infty$, one infers
		\begin{align}		
			&\mathrel{\phantom{=}}\|\delta z^{+^{(n+1)}}\|_{F^{s-1}_{p,q}}+\|\delta z^{-^{(n+1)}}\|_{F^{s-1}_{p,q}}\nonumber\\
			&\leq\|\Delta_{n+1}z^{+}_0\|_{F^{s-1}_{p,q}}+\|\Delta_{n+1}z^{-}_0\|_{F^{s-1}_{p,q}}+\int_0^t\|\nabla\delta\pi^{(n+1)}_1\|_{F^{s-1}_{p,q}}+\|\nabla\delta\pi^{(n+1)}_2\|_{F^{s-1}_{p,q}}d\tau\nonumber\\
			&\mathrel{\phantom{=}}+\int^t_0\|\delta z^{-^{(n)}}\cdot\nabla z^{+^{(n)}}\|_{F^{s-1}_{p,q}}+\|\delta z^{+^{(n)}}\cdot\nabla z^{-^{(n)}}\|_{F^{s-1}_{p,q}}d\tau\nonumber\\
			&\mathrel{\phantom{=}}+C\int^t_0\| z^{-^{(n)}}\|_{F^{s}_{p,q}}\|\delta z^{+^{(n+1)}}\|_{F^{s-1}_{p,q}}+\| z^{+^{(n)}}\|_{F^{s}_{p,q}}\|\delta z^{-^{(n+1)}}\|_{F^{s-1}_{p,q}}d\tau.\label{equation5}
		\end{align}
		Thanks to \eqref{a.o.c}, Lemma \ref{miaoguji}, Young's inequality and  Lemma \ref{Fguji}, we have for $1<p<\infty$, $1<q\leq\infty$,
		\begin{align*}
			\|\Delta_{n+1}z^+_0\|_{\dot{F}^{s-1}_{p,q}}&=\Big\| \big\|2^{j(s-1)}|\Delta_j\Delta_{n+1}z^+_0|\big\|_{l^q}\Big\|_{L^p}\\
			&\leq \Big\| \big\|\sum_{ |j-n-1|\leq 1}2^{s(j-n-1)}2^{-j}M(2^{{(n+1)s}}|\Delta_{n+1}z^+_0|)\big\|_{l^q}\Big\|_{L^p}\\
			&\leq C2^{-n}\|z^+_0\|_{\dot{F}^s_{p,q}}.
		\end{align*}
		The endpoint cases where $p=1$ or $q=1$ with $1\leq p<\infty$ can be handled by replacing Lemma \ref{miaoguji} with Lemma \ref{guoguji} in the above argument. Meanwhile, using Remark \ref{remark1}, we get
		\begin{align*}
			\|\Delta_{n+1}z^+_0\|_{\dot{F}^{s-1}_{\infty,\infty}}=\sup_{j\in \mathbb{Z}}2^{j(s-1)}\|\Delta_{n+1}\Delta_jz^+_0\|_{L^\infty}
			\leq C2^{-n}\sup_{j\in \mathbb{Z}}2^{js}\|\Delta_jz^+_0\|_{L^\infty}
			=C2^{-n}\|z^+_0\|_{\dot{F}^{s}_{\infty,\infty}}.
		\end{align*}
			%\begin{align*}
			%\|\Delta_{n+1}z^+_0\|_{\dot{F}^{s-1}_{\infty,\infty}}&=\sup_{j\in \mathbb{Z}}2^{j(s-1)}\|\Delta_{n+1}\Delta_jz^+_0\|_{L^\infty}\\
			%&\leq C2^{-n}\sup_{j\in \mathbb{Z}}2^{js}\|\Delta_jz^+_0\|_{L^\infty}\\
			%&=C2^{-n}\|z^+_0\|_{\dot{F}^{s}_{\infty,\infty}}.
		%\end{align*}
		Similarly, an estimate for $\|\Delta_{n+1}z^-_0\|_{\dot{F}^{s-1}_{p,q}} $ can be obtained in the same manner as above, thereby yields
		\begin{equation}
			\|\Delta_{n+1}z^+_0\|_{\dot{F}^{s-1}_{p,q}}+\|\Delta_{n+1}z^-_0\|_{\dot{F}^{s-1}_{p,q}}\leq C2^{-n}\left( \|z^+_0\|_{\dot{F}^{s}_{p,q}}+\|z^-_0\|_{\dot{F}^{s}_{p,q}}\right).
		\end{equation}
		By Lemma \ref{Bernstein}, Remark \ref{remark1} and the fact $F^{s-1}_{p,q}\hookrightarrow L^p$ when $s>1$, one gets 
		\begin{align*}
			\|\Delta_{n+1}z^+_0\|_{L^p}+\|\Delta_{n+1}z^-_0\|_{L^p}
			&\leq C2^{-(n+1)}\left( \|\nabla z^+_0\|_{L^p}+\|\nabla z^-_0\|_{L^p}\right)\\
			&\leq C2^{-(n+1)}\left( \|z^+_0\|_{F^{s}_{p,q}}+\|z^-_0\|_{F^{s}_{p,q}}\right).
		\end{align*}
		Thus, we have
		\begin{equation}\label{equation6}
			\|\Delta_{n+1}z^+_0\|_{F^{s-1}_{p,q}}+\|\Delta_{n+1}z^-_0\|_{F^{s-1}_{p,q}}\leq C2^{-n}\left( \|z^+_0\|_{F^{s}_{p,q}}+\|z^-_0\|_{F^{s}_{p,q}}\right).
		\end{equation}
		Taking the divergence on both sides of \eqref{IMHDCauchy}, one has
		\begin{equation}
			\delta\pi_1^{(n+1)}=\partial_j(-\Delta)^{-1}(\delta z_i^{-^{(n)}}\partial_iz_j^{+^{(n)}})+\partial_i(-\Delta)^{-1}(\partial_j z_i^{-^{(n)}}\delta z_j^{+^{(n+1)}}),
		\end{equation}
		\begin{equation}
			\delta\pi_2^{(n+1)}=\partial_j(-\Delta)^{-1}(\delta z_i^{+^{(n)}}\partial_iz_j^{-^{(n)}})+\partial_i(-\Delta)^{-1}(\partial_j z_i^{+^{(n)}}\delta z_j^{-^{(n+1)}}).
		\end{equation}
		Similar to the proof of \eqref{pi1}, we have
		\begin{align}
			&\mathrel{\phantom{=}}\|\nabla\delta\pi_1^{(n+1)}\|_{F^{s-1}_{p,q}}+\|\nabla\delta\pi_2^{(n+1)}\|_{F^{s-1}_{p,q}}\nonumber\\
			&\leq C\left(\|\delta z^{-^{(n)}}\|_{F^{s-1}_{p,q}}\| z^{+^{(n)}}\|_{F^{s}_{p,q}}+\| z^{-^{(n)}}\|_{F^{s}_{p,q}}\| \delta z^{+^{(n+1)}}\|_{F^{s-1}_{p,q}}\right)\nonumber\\
			&\mathrel{\phantom{=}}+ C\left(\|\delta z^{+^{(n)}}\|_{F^{s-1}_{p,q}}\| z^{-^{(n)}}\|_{F^{s}_{p,q}}+\| z^{+^{(n)}}\|_{F^{s}_{p,q}}\| \delta z^{-^{(n+1)}}\|_{F^{s-1}_{p,q}}\right)\label{equation7}.
		\end{align}
		Applying Proposition \ref{moser-type-estimate} and the embedding $F^{s-1}_{p,q}\hookrightarrow L^\infty$, one deduces
		\begin{align}
			&\mathrel{\phantom{=}}\|\delta z^{-^{(n)}}\cdot\nabla z^{+^{(n)}}\|_{F^{s-1}_{p,q}}+\|\delta z^{+^{(n)}}\cdot\nabla z^{-^{(n)}}\|_{F^{s-1}_{p,q}}\nonumber\\
			&\leq C\left(\|\delta z^{-^{(n)}}\|_{F^{s-1}_{p,q}}\| z^{+^{(n)}}\|_{F^{s}_{p,q}}+\| z^{-^{(n)}}\|_{F^{s}_{p,q}}\|\delta z^{+^{(n)}}\|_{F^{s-1}_{p,q}}\right).\label{equation8}
		\end{align}
		Hence, follows from \eqref{equation5}, \eqref{equation6}, \eqref{equation7} and \eqref{equation8} that
		\begin{align*}		
			&\mathrel{\phantom{=}}\|\delta z^{+^{(n+1)}}\|_{F^{s-1}_{p,q}}+\|\delta z^{-^{(n+1)}}\|_{F^{s-1}_{p,q}}\\
			&\leq C\Big(2^{-n}(\|z^{+}_0\|_{F^{s}_{p,q}}+\|z^{-}_0\|_{F^{s}_{p,q}})\\
			&\mathrel{\phantom{=}}+T\sup_{t\in[0,T]}\big(\|\delta z^{-^{(n)}}\|_{F^{s-1}_{p,q}}\| z^{+^{(n)}}\|_{F^{s}_{p,q}}+\| z^{-^{(n)}}\|_{F^{s}_{p,q}}\| \delta z^{+^{(n+1)}}\|_{F^{s-1}_{p,q}}\\
			&\mathrel{\phantom{=}}+ \|\delta z^{+^{(n)}}\|_{F^{s-1}_{p,q}}\| z^{-^{(n)}}\|_{F^{s}_{p,q}}+\| z^{+^{(n)}}\|_{F^{s}_{p,q}}\| \delta z^{-^{(n+1)}}\|_{F^{s-1}_{p,q}}\big)\Big),
		\end{align*}
		which together with \eqref{IMHDyoujie} implies
		\begin{align*}		
			&\mathrel{\phantom{=}}\|\delta z^{+^{(n+1)}}\|_{L^\infty_T(F^{s-1}_{p,q}))}+\|\delta z^{-^{(n+1)}}\|_{L^\infty_T(F^{s-1}_{p,q})}\\
			&\leq C(\|z^+_0\|_{F^s_{p,q}}+\|z^-_0\|_{F^s_{p,q}})\Big(2^{-n}+T\big(\|\delta z^{-^{(n)}}\|_{L^\infty_T(F^{s-1}_{p,q})}+\| \delta z^{+^{(n+1)}}\|_{L^\infty_T(F^{s-1}_{p,q})}\\
			&\quad+ \|\delta z^{+^{(n)}}\|_{L^\infty_T(F^{s-1}_{p,q})}+\| \delta z^{-^{(n+1)}}\|_{L^\infty_T(F^{s-1}_{p,q})}\big)\Big).
		\end{align*}
		Thus, if $C(\|z^+_0\|_{F^s_{p,q}}+\|z^-_0\|_{F^s_{p,q}})T\leq\frac{1}{8}$, then
		\begin{align}	
			&\quad\|\delta z^{+^{(n+1)}}\|_{L^\infty_T(F^{s-1}_{p,q})}+\|\delta z^{-^{(n+1)}}\|_{L^\infty_T(F^{s-1}_{p,q})}\nonumber\\
			&\leq C(\|z^+_0\|_{F^s_{p,q}}+\|z^-_0\|_{F^s_{p,q}})2^{-n}\nonumber\\
			&\quad+2C(\|z^+_0\|_{F^s_{p,q}}+\|z^-_0\|_{F^s_{p,q}})T\big(\|\delta z^{-^{(n)}}\|_{L^\infty_T(F^{s-1}_{p,q})}+ \|\delta z^{+^{(n)}}\|_{L^\infty_T(F^{s-1}_{p,q})}\big),
		\end{align}
		which yields that 
		\begin{equation}\label{equation9}		
			\|\delta z^{+^{(n+1)}}\|_{L^\infty_T(F^{s-1}_{p,q})}+\|\delta z^{-^{(n+1)}}\|_{L^\infty_T(F^{s-1}_{p,q})}\leq 2C(\|z^+_0\|_{F^s_{p,q}}+\|z^-_0\|_{F^s_{p,q}})2^{-n}.			
		\end{equation}
		Therefore, $(z^{+^{(n)}},z^{-^{(n)}})_{n\in\mathbb{N}}$ is a Cauchy sequence in $C([0,T];F^{s-1}_{p,q}\times F^{s-1}_{p,q})$, whence it converges to some limit function $(z^+,z^-)\in C([0,T];F^{s-1}_{p,q}\times F^{s-1}_{p,q})$. \\
		\textbf{Step 4. Conclusion.}
		
		Finally, we prove the limit $(z^+,z^-)\in E^{s}_{p,q}(T)\times E^{s}_{p,q}(T)$ for all $T$ satisfying
		\[ 0<T\leq\min\{T_0,\frac{1}{8C(\|z^+_0\|_{F^s_{p,q}}+\|z^-_0\|_{F^s_{p,q}})}\}, \]
		and that it satisfies the system \eqref{yongMHD}. Indeed, from Step 2, $(z^{+^{(n)}},z^{-^{(n)}})_{n\in\mathbb{N}}$ is uniformly bounded in $L^\infty(0,T;F^s_{p,q}\times F^{s}_{p,q})$. Then the Fatou property (Lemma \ref{lem:Triebel-Lizorkin-properties} $(iii)$) for Triebel-Lizorkin spaces guarantees that $(z^+,z^-)\in L^\infty(0,T;F^s_{p,q}\times F^{s}_{p,q})$. Moreover, $(z^+,z^-)$ satisfies
		\begin{equation}\label{jieyoujie}
			\|z^+\|_{L^\infty_T(F^s_{p,q})}+\|z^-\|_{L^\infty_T(F^s_{p,q})}\leq 2C(\|z^+_0\|_{F^s_{p,q}}+\|z^-_0\|_{F^s_{p,q}}).
		\end{equation}
		As shown in Step 3, $(z^{+^{(n)}},z^{-^{(n)}})_{n\in\mathbb{N}}$ converges to $(z^+,z^-)$ in $C([0,T];F^{s-1}_{p,q}\times F^{s-1}_{p,q})$, an interpolation argument ensures that convergence actually holds true in $C([0,T];F^{s'}_{p,q}\times F^{s'}_{p,q})$ for any $s'<s$. It is easy to pass to the limit in system \eqref{yongMHD} and to conclude that $(u,b)$ is indeed a solution to system \eqref{MHD} with the initial data $(u_0,b_0)\in F^s_{p,q}\times F^{s}_{p,q}$.
		
		On the other hand, since $(z^+,z^-)\in L^\infty(0,T;F^s_{p,q}\times F^{s}_{p,q})$, it follows from the system \eqref{yongMHD} that $\nabla\pi\in L^\infty(0,T;F^s_{p,q})$. In the case $q<\infty$, Theorem \ref{shuyunfangchengcunzaixing} enables us to conclude that $(z^+,z^-)\in C([0,T];F^s_{p,q}\times F^{s}_{p,q})$. Finally using the system \eqref{yongMHD} again, we see that $\partial_t z^+$ and $\partial_tz^-$ are both in $C([0,T];F^{s-1}_{p,q})$ if $q$ is finite, and in $L^\infty(0,T;F^{s-1}_{p,q})$ otherwise. Hence, the solution $(z^+,z^-)$ belongs to $E^s_{p,q}(T)\times E^s_{p,q}(T)$.
		
		\textbf{Uniqueness:} Let us consider that $(z^{+'},z^{-'})\in C([0,T];F^s_{p,q}\times F^{s}_{p,q})$ is another solution to the system \eqref{yongMHD} with the same initial data. Denote $\delta z^+=z^+-z^{+'}$ and $\delta z^-=z^--z^{-'}$, then we find 
		\begin{equation}
			\begin{cases*}
				\partial_t\delta z^++(z^-\cdot\nabla)\delta z^+=-(\delta z^-\cdot\nabla)z^+-\nabla(\pi-\pi'),\\
				\partial_t\delta z^-+(z^+\cdot\nabla)\delta z^-=-(\delta z^+\cdot\nabla)z^--\nabla(\pi-\pi'),\\
				\nabla\cdot\delta z^+=\nabla\cdot\delta z^-=0,\\
				\delta z^{+}(0)=\delta z^{-}(0)=0.
			\end{cases*}
		\end{equation}
		A similar argument to that used in the derivation of \eqref{equation9} yields
		\begin{equation}		
			\|\delta z^{+}\|_{E^{s-1}_{p,q}(T)}+\|\delta z^{-}\|_{E^{s-1}_{p,q}(T)}\leq C(\|\delta z^{+}(0)\|_{F^{s-1}_{p,q}}+\|\delta z^{-}(0)\|_{F^{s-1}_{p,q}})=0,
		\end{equation}
		which ensures the uniqueness of the solution to system \eqref{yongMHD}.
	
		\textbf{Continuity (continuous dependence of the solution map):} For any $z^+_0,z^-_0,\tilde{z}_0^{+'},\tilde{z}_0^{-'}\in D(R)\triangleq\{f\in F^s_{p,q}\colon \|f\|_{F^s_{p,q}}\leq R,\textup{div}f=0\}$, we denote the corresponding solutions $z^+=\mathfrak{S}_T(z^+_0),z^-=\mathfrak{S}_T(z^-_0),\tilde{z}^{+'}=\mathfrak{S}_T(\tilde{z}^{+'}_0),\tilde{z}^{-'}=\mathfrak{S}_T(\tilde{z}^{-'}_0)$, and set $w^+=z^+-\tilde{z}^{+'},w^-=z^--\tilde{z}^{-'}$, then $(w^+,w^-)$ solves
		\begin{equation}
			\begin{cases*}
				\partial_tw^++(z^-\cdot\nabla)w^+=-(w^-\cdot\nabla)z^+-\nabla(\pi-\tilde{\pi}'),\\
				\partial_tw^-+(z^+\cdot\nabla)w^-=-(w^+\cdot\nabla)z^--\nabla(\pi-\tilde{\pi}'),\\
				\nabla\cdot w^+=\nabla\cdot w^-=0,\\
				w^{+}(0)=z_0^+-\tilde{z}_0^{+'},\\
				w^{-}(0)=z_0^--\tilde{z}_0^{-'}.
			\end{cases*}
		\end{equation}
		By means of \eqref{feiqiciF-priori-estimates-divv=0} in Theorem \ref{xianyanguji} and the embedding $F^{s-1}_{p,q}\hookrightarrow L^\infty$ again, we find
		\begin{align}
			&\quad\|w^+\|_{F^{s-1}_{p,q}}+\|w^-\|_{F^{s-1}_{p,q}}\nonumber\\
			&\leq \|w^+_0\|_{F^{s-1}_{p,q}}+\|w^-_0\|_{F^{s-1}_{p,q}}+\int_{0}^t(\|w^-\cdot\nabla z^+\|_{F^{s-1}_{p,q}}+\|w^+\cdot\nabla z^-\|_{F^{s-1}_{p,q}})d\tau\nonumber\\
			&\quad+2\int_{0}^t\|\nabla(\pi-\tilde{\pi}')\|_{F^{s-1}_{p,q}}d\tau+\int_0^t\big(\| z^-\|_{F^{s}_{p,q}}\|w^+\|_{F^{s-1}_{p,q}}+\| z^+\|_{F^{s}_{p,q}}\|w^-\|_{F^{s-1}_{p,q}}\big)d\tau.\label{equation24}
		\end{align}
		Using Proposition \ref{moser-type-estimate} and the embedding $F^{s-1}_{p,q}\hookrightarrow L^\infty$ again, we obtain
		\begin{equation}
			\|w^-\cdot\nabla z^+\|_{F^{s-1}_{p,q}}+\|w^+\cdot\nabla z^-\|_{F^{s-1}_{p,q}}\leq C\big(\|w^-\|_{F^{s-1}_{p,q}}\|\nabla z^+\|_{F^s_{p,q}}+\|w^+\|_{F^{s-1}_{p,q}}\|\nabla z^-\|_{F^s_{p,q}}\big) .
		\end{equation}
		Notice that
		\begin{equation*}
			\nabla(\pi-\tilde{\pi}')=\nabla(-\Delta)^{-1}\text{div}(z^-\cdot\nabla w^++w^-\cdot\nabla z^+).
		\end{equation*}
		Similar to the proof of \eqref{pi1}, one gets 
		\begin{equation}\label{equation25}
			\|\nabla(\pi-\tilde{\pi}')\|_{F^{s-1}_{p,q}}\leq C\big(\|w^-\|_{F^{s-1}_{p,q}}\|z^+\|_{F^s_{p,q}}+\|w^+\|_{F^{s-1}_{p,q}}\|z^-\|_{F^s_{p,q}}\big).
		\end{equation}
		Summarizing the above estimates \eqref{equation24}-\eqref{equation25}, we deduce
		\begin{align*}
			&\quad\|w^+\|_{F^{s-1}_{p,q}}+\|w^-\|_{F^{s-1}_{p,q}}\\
			&\leq \|w^+_0\|_{F^{s-1}_{p,q}}+\|w^-_0\|_{F^{s-1}_{p,q}}+C\int_0^t\big(\| z^-\|_{F^{s}_{p,q}}+\| z^+\|_{F^{s}_{p,q}}\big)\big(\|w^+\|_{F^{s-1}_{p,q}}+\|w^-\|_{F^{s-1}_{p,q}}\big)d\tau.
		\end{align*}
		Applying Gronwall's inequality and \eqref{jieyoujie}, one obtains
		\begin{align}
			&\quad\|\mathfrak{S}_T(z^+_0)-\mathfrak{S}_T(\tilde{z}^{+'}_0)\|_{E^{s-1}_{T}}+\|\mathfrak{S}_T(z^-_0)-\mathfrak{S}_T(\tilde{z}^{-'}_0)\|_{E^{s-1}_{T}}\nonumber\\
			&\leq C(\|z_0^+-\tilde{z}_0^{+'}\|_{F^{s-1}_{p,q}}+\|z_0^--\tilde{z}_0^{-'}\|_{F^{s-1}_{p,q}})\label{equation10}.
		\end{align}
		Thus, \eqref{equation10} combined with an obvious interpolation ensures the continuity with respect to the initial data in $C([0,T];F^{s'}_{p,q}\times F^{s'}_{p,q})$ for any $s'<s$.
		
		In the case of $q<\infty$, from \eqref{S_jdingyi}, we denote the corresponding solution $z^{+^{N}}=\mathfrak{S}_T(S_{N+1}z^+_0)$, $z^{-^{N}}=\mathfrak{S}_T(S_{N+1}z^-_0)$ and set $ w^{+^N}=z^+-z^{+^{N}}$, $w^{-^N}=z^--z^{-^{N}}$. 
		%It is not hard to see that 
		Apparently, $(w^{+^N},w^{-^N})$ solves the following system:
		\begin{equation}
			\begin{cases*}
				\partial_tw^{+^N}+(z^-\cdot\nabla)w^{+^N}=-(w^{-^{N}}\cdot\nabla)z^+-\nabla(\pi-\pi^N),\\
				\partial_tw^{-^N}+(z^+\cdot\nabla)w^{-^N}=-(w^{+^N}\cdot\nabla)z^--\nabla(\pi-\pi^N),\\
				\nabla\cdot w^{+^N}=\nabla\cdot w^{-^N}=0,\\
				w^{+^N}(0)=z_0^+-S_{N+1}z_0^{+},\\
				w^{-^N}(0)=z_0^--S_{N+1}z_0^{-}.
			\end{cases*}
		\end{equation}
		In a manner analogous to the proof of \eqref{equation10}, we have
		\begin{align}
			&\quad\|\mathfrak{S}_T(z^+_0)-\mathfrak{S}_T(S_{N+1}z_0^{+})\|_{E^{s}_{T}}+\|\mathfrak{S}_T(z_0^{-})-\mathfrak{S}_T(S_{N+1}z_0^{-})\|_{E^{s}_{T}}\nonumber\\
			&\leq C(\|z_0^+-S_{N+1}z_0^{+}\|_{F^{s}_{p,q}}+\|z_0^--S_{N+1}z_0^{-}\|_{F^{s}_{p,q}}).\label{equation11}
		\end{align}
		
		Now, we show the continuity of the
		solution map in $C([0,T];F^s_{p,q}\times F^s_{p,q})$ as $1\leq p,q<\infty$. Let $\tilde{z}^+,\tilde{z}^-\in D(R)$. By virtue of \eqref{equation10} \eqref{equation11} and  Lemma \ref{lem:Triebel-Lizorkin-properties} $(iv)$, we infer
		\begin{align*}
			&\quad\|\mathfrak{S}_T(z^+_0)-\mathfrak{S}_T(\tilde{z}^{+}_0)\|_{L^\infty_T(F^{s}_{p,q})}+\|\mathfrak{S}_T(z^-_0)-\mathfrak{S}_T(\tilde{z}^{-}_0)\|_{L^\infty_T(F^{s}_{p,q})}\\
			&\leq \|\mathfrak{S}_T(z^+_0)-\mathfrak{S}_T(S_{N+1}z^{+}_0)\|_{L^\infty_T(F^{s}_{p,q})}+\|\mathfrak{S}_T(z^-_0)-\mathfrak{S}_T(S_{N+1}z^{-}_0)\|_{L^\infty_T(F^{s}_{p,q})}\\
			&\quad+\|\mathfrak{S}_T(\tilde{z}^{+}_0)-\mathfrak{S}_T(S_{N+1}\tilde{z}^{+}_0)\|_{L^\infty_T(F^{s}_{p,q})}+\|\mathfrak{S}_T(\tilde{z}^{-}_0)-\mathfrak{S}_T(S_{N+1}\tilde{z}^{-}_0)\|_{L^\infty_T(F^{s}_{p,q})}\\
			&\quad+\|\mathfrak{S}_T(S_{N+1}z^{+}_0)-\mathfrak{S}_T(S_{N+1}\tilde{z}^{+}_0)\|_{L^\infty_T(F^{s}_{p,q})}+\|\mathfrak{S}_T(S_{N+1}z^{-}_0)-\mathfrak{S}_T(S_{N+1}\tilde{z}^{-}_0)\|_{L^\infty_T(F^{s}_{p,q})}\\
			&\leq C\big( \|z^+_0-S_{N+1}z^{+}_0\|_{F^{s}_{p,q}}+\|\tilde{z}^{+}_0-S_{N+1}\tilde{z}^{+}_0\|_{F^{s}_{p,q}}+\|z^-_0-S_{N+1}z^{-}_0\|_{F^{s}_{p,q}}+\|\tilde{z}^{-}_0-S_{N+1}\tilde{z}^{-}_0\|_{F^{s}_{p,q}}\big)\\
			&\quad+\|\mathfrak{S}_T(S_{N+1}z^{+}_0)-\mathfrak{S}_T(S_{N+1}\tilde{z}^{+}_0)\|^{\frac{1}{2}}_{L^\infty_T(F^{s-1}_{p,q})}\|\mathfrak{S}_T(S_{N+1}z^{+}_0)-\mathfrak{S}_T(S_{N+1}\tilde{z}^{+}_0)\|^{\frac{1}{2}}_{L^\infty_T(F^{s+1}_{p,q})}\\
			&\quad+\|\mathfrak{S}_T(S_{N+1}z^{-}_0)-\mathfrak{S}_T(S_{N+1}\tilde{z}^{-}_0)\|^{\frac{1}{2}}_{L^\infty_T(F^{s-1}_{p,q})}\|\mathfrak{S}_T(S_{N+1}z^{-}_0)-\mathfrak{S}_T(S_{N+1}\tilde{z}^{-}_0)\|^{\frac{1}{2}}_{L^\infty_T(F^{s+1}_{p,q})}\\
			&\leq C\big( \|z^+_0-S_{N+1}z^{+}_0\|_{F^{s}_{p,q}}+\|\tilde{z}^{+}_0-S_{N+1}\tilde{z}^{+}_0\|_{F^{s}_{p,q}}+\|z^-_0-S_{N+1}z^{-}_0\|_{F^{s}_{p,q}}+\|\tilde{z}^{-}_0-S_{N+1}\tilde{z}^{-}_0\|_{F^{s}_{p,q}}\big)\\
			&\quad+C2^{\frac{N}{2}}R^{\frac{1}{2}}(\|z_0^+-\tilde{z}_0^{+}\|_{F^{s-1}_{p,q}}^{\frac{1}{2}}+\|z_0^--\tilde{z}_0^{-}\|_{F^{s-1}_{p,q}}^{\frac{1}{2}}),
		\end{align*}
		where we used Lemma 
		\ref{lem:Snguji} in the last inequality. Since $1\leq p,q<\infty$, then for any $\varepsilon>0$, one can select $N$ to be sufficiently large, such that
		\begin{equation*}
			C\big( \|z^+_0-S_{N+1}z^{+}_0\|_{F^{s}_{p,q}}+\|\tilde{z}^{+}_0-S_{N+1}\tilde{z}^{+}_0\|_{F^{s}_{p,q}}+\|z^-_0-S_{N+1}z^{-}_0\|_{F^{s}_{p,q}}+\|\tilde{z}^{-}_0-S_{N+1}\tilde{z}^{-}_0\|_{F^{s}_{p,q}}\big)\leq \frac{\varepsilon}{2}.
		\end{equation*}
		Then we choose $\sigma$ small enough such that $\|z_0^+-\tilde{z}_0^{+}\|_{F^{s}_{p,q}},\|z_0^--\tilde{z}_0^{-}\|_{F^{s}_{p,q}}<\sigma$ and $C2^{\frac{N}{2}}R^{\frac{1}{2}}\sigma^{\frac{1}{2}}< \frac{\varepsilon}{4}$. Hence, we have
		\begin{equation*}
			\|\mathfrak{S}_T(z^+_0)-\mathfrak{S}_T(\tilde{z}^{+}_0)\|_{L^\infty_T(F^{s}_{p,q})}+\|\mathfrak{S}_T(z^-_0)-\mathfrak{S}_T(\tilde{z}^{-}_0)\|_{L^\infty_T(F^{s}_{p,q})}\leq \varepsilon.
		\end{equation*}
		 This yields the continuous dependence of the solution map. Therefore, we complete the proof of Theorem \ref{MHD-Local-well-posedness}.	 
	\end{proof}

	\begin{proof}[Proof of Theorem \ref{Blow-up Criterion}]
		Applying \eqref{feiqiciF-priori-estimates} of Theorem \ref{xianyanguji} to the system \eqref{yongMHD}, one gets
		\begin{align}
			\|z^+\|_{F^s_{p,q}}+\|z^-\|_{F^s_{p,q}}\leq&\|z_0^+\|_{F^s_{p,q}}+\|z_0^-\|_{F^s_{p,q}}+2\int_{0}^t\|\nabla\pi\|_{F^s_{p,q}}d\tau\nonumber\\
			&+C\int^t_0(\|z^+\|_{F^s_{p,q}}+\|z^-\|_{F^s_{p,q}})(\|\nabla z^+\|_{L^\infty}+\|\nabla z^-\|_{L^\infty})d\tau.\label{equation15}
		\end{align}
		Owing to the boundedness of operator $\partial_j\partial_k(-\Delta)^{-1}$ in $\dot{F}^s_{p,q}$ (see \cite{MR1009119}), Proposition \ref{moser-type-estimate} and Lemma \ref{tiduheqiciF}, one can show that if $(p,q)\in[1,\infty)\times[1,\infty]$ or $p=q=\infty$,
		\begin{align}
			\|\nabla \pi\|_{\dot{F}^{s}_{p,q}}&\leq C\|\nabla\nabla(\Delta)^{-1}\text{div}(z^{-}\cdot\nabla)z^+\|_{\dot{F}^{s-1}_{p,q}}\nonumber\\
			&\leq C\big(\|\nabla z^-\|_{L^\infty}\| z^+\|_{\dot{F}^{s}_{p,q}}+\|z^-\|_{\dot{F}^{s}_{p,q}}\|\nabla z^+\|_{L^\infty}\big). \label{equation16}
		\end{align}
		By Lemma \ref{p=1Reszi}, one infers when $1\leq p\leq\infty$,
		\begin{align}
			2\|S_0\nabla\pi\|_{L^p}&=\|S_0\nabla(\Delta)^{-1}\text{div}(z^{-}\cdot\nabla)z^+\|_{L^p}+\|S_0\nabla(\Delta)^{-1}\text{div}(z^{+}\cdot\nabla)z^-\|_{L^p}\nonumber\\
			&\leq C\big(\|z^-\|_{L^p}\|z^+\|_{L^\infty}+\|z^+\|_{L^p}\|z^-\|_{L^\infty}\big).\label{equation17}
		\end{align}
		Combining \eqref{equation15}, \eqref{equation16}, and \eqref{equation17}, we obtain
		\begin{align}
			\|z^+\|_{F^s_{p,q}}+\|z^-\|_{F^s_{p,q}}\leq&\|z_0^+\|_{F^s_{p,q}}+\|z_0^-\|_{F^s_{p,q}}+C\int^t_0(\|z^+\|_{F^s_{p,q}}+\|z^-\|_{F^s_{p,q}})\nonumber\\
			&\times(\| z^+\|_{L^\infty}+\|z^-\|_{L^\infty}+\|\nabla z^+\|_{L^\infty}+\|\nabla z^-\|_{L^\infty})d\tau.\label{equation18}
		\end{align}
		Then Gronwall's inequality yields the desired blow-up criterion \eqref{blow-up1}.
		
		On the other hand, we recall the fact that the
		elliptic system, $\text{div }v=0$ and $\nabla\times v=w$ implies 
		\[ \nabla v=\mathcal{P}(w)+\mathcal{M}w, \]
		where $\mathcal{P}$ is a singular integral operator homogeneous of degree $-n$, and $\mathcal{M}$ is a constant matrix. By the boundedness of singular integral operator From $\dot{F}^0_{\infty,\infty}$ into itself \cite{MR1009119}, one gets 
		\begin{equation}
			\|\nabla z^-\|_{\dot{F}^0_{\infty,\infty}}\leq C\|\nabla\times z^-\|_{\dot{F}^0_{\infty,\infty}},\quad\|\nabla z^+\|_{\dot{F}^0_{\infty,\infty}}\leq C\|\nabla\times z^+\|_{\dot{F}^0_{\infty,\infty}}.
		\end{equation}
		When $s>1+\frac{d}{p}$, by means of Lemma \ref{lem:logarithmic-Triebel-Lizorkin-space-inequality}, we have
		\begin{align}
			\|\nabla z^+\|_{L^\infty}&\leq C\big(1+\|\nabla z^+\|_{\dot{F}^0_{\infty,\infty}}(\ln^+\|\nabla z^+\|_{F^{s-1}_{p,q}}+1)\big)\nonumber\\
			&\leq  C\big(1+\|\nabla \times z^+\|_{\dot{F}^0_{\infty,\infty}}(\ln^+\| z^+\|_{F^{s}_{p,q}}+1)\big)\label{equation22},
		\end{align}
		and
		\begin{align}
			\|\nabla z^-\|_{L^\infty}&\leq C\big(1+\|\nabla z^-\|_{\dot{F}^0_{\infty,\infty}}(\ln^+\|\nabla z^-\|_{F^{s-1}_{p,q}}+1)\big)\nonumber\\
			&\leq  C\big(1+\|\nabla \times z^-\|_{\dot{F}^0_{\infty,\infty}}(\ln^+\| z^-\|_{F^{s}_{p,q}}+1)\big)\label{equation23}.
		\end{align}
		Substituting the two above estimates \eqref{equation22} and \eqref{equation23}  into \eqref{equation18}, we obtain 
		\begin{align}
			&\quad\|z^+\|_{F^s_{p,q}}+\|z^-\|_{F^s_{p,q}}\nonumber\\
			&\leq\|z_0^+\|_{F^s_{p,q}}+\|z_0^-\|_{F^s_{p,q}}+C\int^t_0(\|z^+\|_{F^s_{p,q}}+\|z^-\|_{F^s_{p,q}})\nonumber\\
			&\quad\times\big((1+\| z^+\|_{L^\infty}+\|z^-\|_{L^\infty}+\|\nabla \times z^+\|_{\dot{F}^0_{\infty,\infty}}+\|\nabla\times z^-\|_{\dot{F}^0_{\infty,\infty}})\nonumber\\
			&\quad\times(1+\ln^+\|z^+\|_{F^s_{p,q}}+\ln^+\|z^-\|_{F^s_{p,q}})\big)d\tau,
		\end{align}
		which along with the Osgood inequality in \cite{MR2768550} implies
		\begin{align*}
			&\|z^+\|_{F^s_{p,q}}+\|z^-\|_{F^s_{p,q}}\leq\big( \|z^+_0\|_{F^s_{p,q}}+\|z^-_0\|_{F^s_{p,q}}\big)\times\\
			&\textup{exp}\bigg[C\textup{exp}\big[C\int_{0}^{t}(1+\| z^+\|_{L^\infty}+\|z^-\|_{L^\infty}+\|(\nabla\times z^+)(t)\|_{\dot{F}^0_{\infty,\infty}}+\|(\nabla\times z^-)(t)\|_{\dot{F}^0_{\infty,\infty}})d\tau\big]\bigg].
		\end{align*}
		This leads to the blow-up criterion \eqref{blow-up2}.
		
		In the case of $(p,q)\in(1,\infty)\times[1,\infty]$ and $s>1+\frac{d}{p}$, motivated by \cite{MR2592288}, by the $L^p$-boundedness of Riesz transform and $\pi=(-\Delta)^{-1}\partial_i\partial_j(z^-_iz^+_j)$, one can see
		\begin{equation}\label{equation20}
			2\|\nabla \pi\|_{L^p}\leq C\big(\|\nabla z^+\|_{L^\infty}\|z^-\|_{L^p}+\|\nabla z^-\|_{L^\infty}\|z^+\|_{L^p}\big),\quad 1<p<\infty.
		\end{equation}
		A combination of \eqref{equation15}, \eqref{equation16} and \eqref{equation20} yields
		\begin{align}
			&\quad\|z^+\|_{F^s_{p,q}}+\|z^-\|_{F^s_{p,q}}\nonumber\\
			&\leq\|z_0^+\|_{F^s_{p,q}}+\|z_0^-\|_{F^s_{p,q}}+C\int^t_0(\|z^+\|_{F^s_{p,q}}+\|z^-\|_{F^s_{p,q}})(\|\nabla z^+\|_{L^\infty}+\|\nabla z^-\|_{L^\infty})d\tau.\label{equation21}
		\end{align}
		Then, substituting \eqref{equation22} and \eqref{equation23} into \eqref{equation21}, and applying the Osgood inequality \cite{MR2768550} again, one obtains
		\begin{equation*}
			\|z^+\|_{F^s_{p,q}}+\|z^-\|_{F^s_{p,q}}\leq\big( \|z^+_0\|_{F^s_{p,q}}+\|z^-_0\|_{F^s_{p,q}}\big)e^{Ce^{C\int_{0}^{t}(1+\|(\nabla\times z^+)(t)\|_{\dot{F}^0_{\infty,\infty}}+\|(\nabla\times z^-)(t)\|_{\dot{F}^0_{\infty,\infty}})d\tau}},
		\end{equation*}
		which implies the blow-up criterion \eqref{blow-up3}. Therefore, we complete the proof of Theorem \ref{Blow-up Criterion}.
	\end{proof}

	\noindent{\bf Acknowledgments.} This work was partially supported by the National Natural Science Foundation of China under grant 11971188.

	\bibliographystyle{abbrv}
	\bibliography{ref}

\end{document}